\newtheorem{theorem}{Theorem}[section]
\newtheorem{lemma}[theorem]{Lemma}
\newtheorem{proposition}[theorem]{Proposition}
\newtheorem{corollary}[theorem]{Corollary}
\theoremstyle{definition}
\theoremstyle{remark}
\newtheorem{remark}[theorem]{Remark}
\numberwithin{equation}{section}
\newcommand{\ba}{\begin{array}}
\newcommand{\ea}{\end{array}}
\newcommand{\R}{\mathbb{R}}
\newcommand{\N}{\mathbb{N}}
\begin{document}
 \date{}
 \title{ Berestycki-Lions conditions on ground state solutions for Kirchhoff-type problems with variable potentials
\footnote{This paper was submitted to Journal on April 18, 2018.}}
\author{ Sitong Chen \ and Xianhua Tang\\
        {\small School of Mathematics and Statistics, Central South University,}\\
        {\small Changsha 410083, Hunan, P.R.China }\\
        {\small E-mail: mathsitongchen@163.com (S.T. Chen)}\\
        {\small E-mail: tangxh@mail.csu.edu.cn(X.H. Tang)}}
\maketitle
\begin{abstract}
 By introducing some new tricks, we prove that the nonlinear problem of Kirchhoff-type
 \begin{equation*}
 \left\{
   \begin{array}{ll}
     -\left(a+b\int_{\R^3}|\nabla u|^2\mathrm{d}x\right)\triangle u+V(x)u=f(u), & x\in \R^3; \\
     u\in H^1(\R^3),
   \end{array}
 \right.
 \end{equation*}
 admits two class of ground state solutions under the general ``Berestycki-Lions assumptions" on the nonlinearity $f$
 which are almost necessary conditions, as well as  some weak assumptions on the potential $V$. Moreover, we also give a
 simple minimax characterization of the ground state energy. Our results improve and complement previous ones in the literature.

 \noindent
{\bf Keywords: }\ \ Kirchhoff-type problem; Ground state solution;  Poho\u zaev manifold

 \noindent
 {\bf 2010 Mathematics Subject Classification.}\ \  35J20, 35Q55
\end{abstract}

{\section{Introduction}}
 \setcounter{equation}{0}

   In this paper, we consider the following nonlinear problem of Kirchhoff-type:
 \begin{equation}\label{KE}
 \left\{
   \begin{array}{ll}
     -\left(a+b\int_{\R^3}|\nabla u|^2\mathrm{d}x\right)\triangle u+V(x)u=f(u), & x\in \R^3; \\
     u\in H^1(\R^3),
   \end{array}
 \right.
 \end{equation}
 where $a, b>0$ are two constants, $V: \R^3\rightarrow \R$ and $f: \R\rightarrow \R$ satisfy

 \vskip2mm
 \begin{itemize}
 \item[(V1)] $V\in \mathcal{C}(\R^3, [0, \infty))$;
 \item[(V2)]  $V_{\infty}:=\liminf_{|y|\to \infty}V(y)\ge  V(x)$ for all $x\in \R^3$;
 \item[(F1)]  $f\in \mathcal{C}(\R, \R)$ and there exists a constant $\mathcal{C}_0>0$ such that
 $$
   |f(t)|\le \mathcal{C}_0\left(1+|t|^{5}\right), \ \ \ \ \forall \ t\in \R;
 $$

 \item[(F2)]  $f(t)=o(t)$ as $t\to 0$ and $|f(t)|=o\left(|t|^{5}\right)$ as $|t|\to +\infty$.
 \end{itemize}

   Clearly, under assumptions (V1), (V2), (F1) and (F2), weak solutions to \eqref{KE} correspond to critical points of
 the energy functional defined in $H^1(\R^3)$ by
 \begin{equation}\label{IU}
   \mathcal{I}(u) = \frac{1}{2}\int_{\R^3}\left[a|\nabla u|^2+V(x)u^2\right]\mathrm{d}x+\frac{b}{4}\left(\int_{\R^3}|\nabla u|^2\mathrm{d}x\right)^2
              -\int_{\R^3}F(u)\mathrm{d}x,
 \end{equation}
 where and in the sequel, $F(t):=\int_{0}^{t}f(s)\mathrm{d}s$. We say a nontrivial weak solution $\bar{u}$ to \eqref{KE} is a
 ground state solution if $\mathcal{I}(\bar{u})\le \mathcal{I}(v)$ for any nontrivial solution $v$ to \eqref{KE}.

 \par
   There have been many works about the existence of nontrivial solutions to \eqref{KE} by using variational methods, see for example,
 \cite{AC,CC,CJ, CBT,DY,FG,HZ1,HZ2, HY,LCY,LYH, LP,ND1,ND2,PK,SW,SJJ, TC2,WJ,WX,ZTC} and the references therein. A typical way
 to deal with \eqref{KE} is to use the mountain-pass theorem. For this purpose, one usually assumes that
 $f(t)$ is subcritical and superlinear at $t=0$ and 4-superlinear at $t=\infty$ in the sense that

 \begin{itemize}
 \item[(SF)] $\lim_{|t|\to\infty}\frac{F(t)}{t^4}=\infty$,
 \end{itemize}
 and satisfies the Ambrosetti-Rabinowitz type condition
 \begin{itemize}
 \item[(AR)] $f(t)t\ge 4F(t)\ge 0, \ \forall \ t\in \R$;
 \end{itemize}
 or the following variant convex condition
 \begin{itemize}
  \item[(S1)]$f(t)/|t|^3$ is strictly increasing for $t\in \R\setminus \{0\}$.
 \end{itemize}
 In fact, under (SF) and (AR) (or (S1)), it is easy to verify the Mountain Pass geometry and the boundedness of (PS)
 sequences for $\mathcal{I}$.

 \par
   When $f(t)$ is not 4-superlinear at $t=\infty$, following the procedure of Ruiz \cite{RD} in which the nonlinear
 Schr\"odinger-Poisson system was dealt with, Li and Ye \cite{LG} first proved that the following special form of \eqref{KE}
 \begin{equation}\label{KE3}
 \left\{
   \begin{array}{ll}
     -\left(a+b\int_{\R^3}|\nabla u|^2\mathrm{d}x\right)\triangle u+u=|u|^{p-2}u, & x\in \R^3; \\
     u\in H^1(\R^3),
   \end{array}
 \right.
 \end{equation}
 has a ground state positive solution if $3 < p <6$, by using a minimizing argument on a Nehari- Poho\u zaev manifold obtained
 by combining the Nehari manifold and the corresponding Poho\u zaev identity. Subsequently,
 by introducing a new Nehari-Poho\u zaev manifold differing from \cite{LG} and using Jeanjean's monotonicity trick \cite{Je}
 and a suitable approximating method, Guo \cite{Gu} generalized Li and Ye's result to \eqref{KE}, where $V$ and $f$ satisfy
 (V1), (V2), (F1), (F2) and
 \begin{itemize}
 \item[(V3$'$)] $V\in \mathcal{C}^1(\R^3, \R)$ and there exists $\theta\in (0, 1)$ such that
 $$
   |\nabla V(x)\cdot x|\le  \frac{\theta a}{2|x|^2}, \ \ \ \ \forall \ x\in \R^3\setminus \{0\};
 $$
 \item[(S2)]  $f\in \mathcal{C}^{1}(\R^{+}, \R)$ and $\left(\frac{f(t)}{t}\right)'>0$.
 \end{itemize}
 Applying Guo's result to \eqref{KE3}, the condition $3 < p <6$ in \cite{LG} can be relaxed to $2 < p <6$.
 More recently, Tang and Chen \cite{TC} introduced some new skills to weaken (V3$'$) and (S2) to the following conditions
 \begin{itemize}
 \item[(V3)] $V\in \mathcal{C}^1(\R^3, \R)$, and
 $$
   \nabla V(x)\cdot x\le  \frac{a}{2|x|^2}, \ \ \ \ \forall \ x\in \R^3\setminus \{0\};
 $$
 \item[(S3)] $f\in \mathcal{C}(\R, \R)$ and $\frac{f(t)t+6F(t)}{|t|t}$ is nondecreasing on $(-\infty, 0)\cup(0, \infty)$.
 \end{itemize}

 \par
   We remark that (SF), (AR), (S1)-(S3) are all global growth conditions. Inspired by the fundamental paper \cite{BL}, Azzollini
 \cite{Az} proved that the ``limit problem" associated with \eqref{KE}
 \begin{equation}\label{KE4}
 \left\{
   \begin{array}{ll}
     -\left(a+b\int_{\R^3}|\nabla u|^2\mathrm{d}x\right)\triangle u+V_{\infty}u=f(u), & x\in \R^3; \\
     u\in H^1(\R^3),
   \end{array}
 \right.
 \end{equation}
 has a ground state positive solution if $f$ satisfies the Berestycki-Lions type assumptions: (F1), (F2) and the following local assumption

 \begin{itemize}
 \item[(F3)] there exists $s_0>0$ such that $F(s_0)>\frac{1}{2}V_{\infty}s_0^2$.
 \end{itemize}

 \par
    Obviously, (F1)-(F3) are satisfied by a very wide class of nonlinearities. In particular, only local conditions on $f(t)$
 are required. Moreover, in view of \cite{Az}, (F1)-(F3) are ``almost" necessary for the existence
 of a nontrivial solution of problem \eqref{KE4}. This kind of conditions were first introduced by Berestycki and Lions \cite{BL}
 for the study of the nonlinear scalar field equation
 \begin{equation}\label{SE}
    -\triangle u+V_{\infty}u=f(u), \ \ \ \  u\in H^1(\R^N).
 \end{equation}

 \par
   To prove the above result, Azzollini considered the following constrained minimization problem
 $$
 m^{\infty}:=\inf_{u\in \mathcal{M}^{\infty}}\mathcal{I}^{\infty}(u),
 $$
 where
 \begin{equation}\label{Ii}
   \mathcal{I}^{\infty}(u):=\frac{1}{2}\int_{{\R}^3}\left(a|\nabla u|^2+V_{\infty}u^2\right)\mathrm{d}x
      +\frac{b}{4}\left(\int_{\R^3}|\nabla u|^2\mathrm{d}x\right)^2
            -\int_{{\R}^3}F(u)\mathrm{d}x
 \end{equation}
 is the energy functional associated with \eqref{KE4}, and
 $$
   \mathcal{M}^{\infty}:=\{u\in H^1(\R^3)\setminus \{0\}: \mathcal{P}^{\infty}(u)=0\}
 $$
 is the Poho\u zaev manifold, and $\mathcal{P}^{\infty}$ is the Poho\u zaev functional defined by
 \begin{eqnarray}\label{Pi}
  \mathcal{P}^{\infty}(u)
    &  =  & \frac{a}{2}\|\nabla u\|_2^2+\frac{3}{2}V_{\infty}\|u\|_2^2+\frac{b}{2}\|\nabla u\|_2^4-3\int_{{\R}^3}F(u)\mathrm{d}x.
 \end{eqnarray}

 \par
    Azzollini first proved that $\mathcal{I}^{\infty}$ possesses a minimizer $u_{\infty}$ on $\mathcal{M}^{\infty}\cap H^1_r(\R^3)$,
 it is also a minimizer on $\mathcal{M}^{\infty}$ by Schwarz symmetrization, then verified that $u_{\infty}$ is a critical point of $\mathcal{I}^{\infty}$ by means of the Lagrange multipliers Theorem.

 \par
   In another paper \cite{Az1}, Azzollini, by means of a rescaling argument, established a general relationship between solutions
 of \eqref{KE4} and \eqref{SE}. That is $u\in \mathcal{C}^2(\R^3)\cap \mathcal{D}^{1,2}(\R^3)$ is a solution to \eqref{KE4} if
 and only if there exist $v\in \mathcal{C}^2(\R^3)\cap \mathcal{D}^{1,2}(\R^3)$ satisfying \eqref{SE} and $t>0$ such that
 $t^2a+tb\|\nabla v\|_2^2=1$ and $u(x)=v(tx)$. With this relationship and the results obtained in \cite{BL,JT1} in hand, Azzollini
 \cite{Az1} also concluded the same results as \cite{Az}. Following \cite{Az1}, Lu \cite{Lu} proved that \eqref{KE4} has infinitely
 many distinct radial solutions if $f$ is odd and satisfies (F1)-(F3).

 \par
   The approach used in \cite{Az, Az1} is valid only for autonomous equations, it does not work any more for nonautonomous equation
 \eqref{KE} with $V\ne$ constant. In the present paper, based on \cite{Az,BL,JT2,TC3}, we shall develop a new approach
 to look for a ground state solution for \eqref{KE} by using (F3) instead of (S3). Our results improve and generalize
 the Azzollini's results in \cite{Az,Az1} on autonomous equation \eqref{KE4}. More precisely, we have the following theorem.

 \begin{theorem}\label{thm1.1} Assume that $V$ and $f$ satisfy {\rm(V1)-(V3)} and {\rm(F1)-(F3)}. Then
 problem \eqref{KE} has a ground state solution.
 \end{theorem}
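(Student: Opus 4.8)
The plan is to realise the ground state as a minimiser of $\mathcal{I}$ over the Poho\u zaev manifold associated with \eqref{KE}. First I would record the Poho\u zaev identity: testing \eqref{KE} with $x\cdot\nabla u$ and integrating shows that every sufficiently regular solution satisfies $\mathcal{P}(u)=0$, where
\[
 \mathcal{P}(u):=\frac a2\|\nabla u\|_2^2+\frac b2\|\nabla u\|_2^4+\frac32\int_{\R^3}V(x)u^2\mathrm{d}x+\frac12\int_{\R^3}(\nabla V(x)\cdot x)u^2\mathrm{d}x-3\int_{\R^3}F(u)\mathrm{d}x .
\]
Put $\mathcal{M}:=\{u\in H^1(\R^3)\setminus\{0\}:\mathcal{P}(u)=0\}$ and $m:=\inf_{\mathcal{M}}\mathcal{I}$. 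The tool behind every estimate below is the dilation $u_t(x):=u(x/t)$ for $t>0$ (with $u_0:=0$), for which a direct computation yields the key identity $\tfrac{\mathrm{d}}{\mathrm{d}t}\mathcal{I}(u_t)=\tfrac1t\mathcal{P}(u_t)$; note that it is condition (V3), used in tandem with Hardy's inequality $\int_{\R^3}u^2|x|^{-2}\mathrm{d}x\le4\|\nabla u\|_2^2$, that controls the potential terms along this family. Once $m$ is shown to be attained at some $\bar u\in\mathcal{M}$ which is moreover a weak solution of \eqref{KE}, the theorem follows, because any nontrivial solution $v$ of \eqref{KE} satisfies the Poho\u zaev identity, hence $v\in\mathcal{M}$ and $\mathcal{I}(v)\ge m=\mathcal{I}(\bar u)$.

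The first step is a dilation lemma. Using (V1), (V3), Hardy's inequality and the elementary factorisations $t^3-3t+2=(t-1)^2(t+2)\ge0$ and $2t^3-3t^2+1=(t-1)^2(2t+1)\ge0$ for $t>0$, one studies $t\mapsto\mathcal{P}(u_t)$ and shows it is positive for small $t$ and, whenever $\int_{\R^3}[F(u)-\tfrac12V_\infty u^2]\mathrm{d}x>0$ (a class of $u$ which is nonempty by (F3)), negative for large $t$ while changing sign only ``downwards''; hence there is a unique $t_u>0$ with $u_{t_u}\in\mathcal{M}$, and $t_u$ is the strict global maximum point of $t\mapsto\mathcal{I}(u_t)$. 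This shows $\mathcal{M}\ne\emptyset$ and yields the minimax characterisation $m=\inf\{\max_{t>0}\mathcal{I}(u_t):u\in H^1(\R^3)\setminus\{0\},\ \int_{\R^3}[F(u)-\tfrac12V_\infty u^2]\mathrm{d}x>0\}$, which is also the mountain--pass level along dilation paths. Finally, on $\mathcal{M}$ one has $\mathcal{I}(u)=\mathcal{I}(u)-\tfrac13\mathcal{P}(u)=\tfrac a3\|\nabla u\|_2^2+\tfrac b{12}\|\nabla u\|_2^4-\tfrac16\int_{\R^3}(\nabla V\cdot x)u^2\ge\tfrac b{12}\|\nabla u\|_2^4$ (again by (V3) and Hardy), and since $\mathcal{P}(u)=0$ together with (F1)--(F2) and Sobolev's inequality keeps $\|\nabla u\|_2$ bounded below on $\mathcal{M}$, one obtains $m>0$.

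Next I would compare $m$ with the ground state level $m^\infty$ of the limit problem \eqref{KE4}. Conditions (V1)--(V2) force $V(x)\to V_\infty$ as $|x|\to\infty$; translating a minimiser $w$ of $\mathcal{I}^\infty$ on $\mathcal{M}^\infty$ (which exists by Azzollini's result and may be taken positive) far out and re-dilating it onto $\mathcal{M}$ therefore gives $m\le m^\infty$. If $V\not\equiv V_\infty$, centring $w$ at a point $x_0$ with $V(x_0)<V_\infty$ gives $\mathcal{I}\big((w(\cdot-x_0))_t\big)<\mathcal{I}^\infty\big((w(\cdot-x_0))_t\big)\le m^\infty$ for every $t>0$, whence $m<m^\infty$. (If $V\equiv V_\infty$ the equation is autonomous, $m=m^\infty$, and Azzollini's theorem already supplies the ground state; so I may assume $m<m^\infty$.)

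For the attainment, take $\{u_n\}\subset\mathcal{M}$ with $\mathcal{I}(u_n)\to m$. From $\mathcal{I}(u_n)\ge\tfrac b{12}\|\nabla u_n\|_2^4$ we get $\|\nabla u_n\|_2$ bounded, and $\mathcal{P}(u_n)=0$ together with (F1)--(F2) and Sobolev embeddings gives boundedness in $H^1(\R^3)$. By Lions' concentration--compactness principle, vanishing is impossible, since (F1)--(F2) would then force $\int_{\R^3}F(u_n)\mathrm{d}x\to0$, contradicting $\mathcal{P}(u_n)=0$ and $m>0$; and a splitting of mass to infinity is impossible, since by $V\to V_\infty$ at infinity any escaping chunk would carry energy $\ge m^\infty>m$. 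Hence, up to a bounded translation, $u_n\rightharpoonup\bar u\ne0$ in $H^1(\R^3)$, and using weak lower semicontinuity, the continuity of $V$ and $\nabla V$, and the dilation lemma applied to $\bar u$ (to reduce to $\bar u\in\mathcal{M}$ without increasing the energy), one gets $\bar u\in\mathcal{M}$ and $\mathcal{I}(\bar u)\le\liminf_n\mathcal{I}(u_n)=m$, so $\mathcal{I}(\bar u)=m$. That $\bar u$ solves \eqref{KE}: since $\bar u$ minimises $\mathcal{I}$ on $\mathcal{P}^{-1}(0)$ and $\tfrac{\mathrm{d}}{\mathrm{d}t}\mathcal{P}(\bar u_t)\big|_{t=1}<0$ (a consequence of (V3), via the Hardy computation that makes $t=1$ a nondegenerate maximum of $t\mapsto\mathcal{I}(\bar u_t)$), the Lagrange relation $\mathcal{I}'(\bar u)=\mu\mathcal{P}'(\bar u)$ combined with $\langle\mathcal{I}'(\bar u),\partial_t\bar u_t|_{t=1}\rangle=\tfrac{\mathrm{d}}{\mathrm{d}t}\mathcal{I}(\bar u_t)|_{t=1}=0$ forces $\mu=0$, i.e. $\mathcal{I}'(\bar u)=0$. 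I expect the compactness step to be the main obstacle: obtaining $H^1$--boundedness of the minimising sequence and excluding the splitting of mass using only the Berestycki--Lions conditions --- no Ambrosetti--Rabinowitz condition, no $4$--superlinearity, and no Schwarz symmetrisation, since $V$ is nonconstant --- is exactly where the dilation identity, the energy--Poho\u zaev inequality on $\mathcal{M}$, and the strict inequality $m<m^\infty$ must substitute for the usual compactness machinery.
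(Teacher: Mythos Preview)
Your plan is essentially the proof of Theorem~\ref{thm1.2}, not of Theorem~\ref{thm1.1}. The distinction matters: the direct ``minimise $\mathcal{I}$ on the Poho\u zaev manifold $\mathcal{M}$'' strategy hinges on the dilation inequality
\[
  \mathcal{I}(u)\ \ge\ \mathcal{I}(u_t)+\frac{1-t^3}{3}\mathcal{P}(u)+\frac{b(1-t)^2(1+2t)}{12}\|\nabla u\|_2^4,
\]
which is what gives you uniqueness of $t_u$, the characterisation $\mathcal{I}(u)=\max_{t>0}\mathcal{I}(u_t)$ on $\mathcal{M}$, the ``re-project $\bar u$ onto $\mathcal{M}$ without increasing energy'' step, and the nondegeneracy $\frac{d}{dt}\mathcal{P}(\bar u_t)\big|_{t=1}<0$ needed in your Lagrange-multiplier argument. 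But to derive this inequality one must control the term $\frac12\int_{\R^3}\bigl[V(x)-t^3V(tx)\bigr]u^2\mathrm{d}x$ along dilations, and that requires the monotonicity of $t\mapsto 3V(tx)+\nabla V(tx)\cdot(tx)+\tfrac{a}{4t^2|x|^2}$, i.e.\ assumption (V4). Hypothesis (V3) by itself only gives the one-sided bound $\nabla V(x)\cdot x\le \tfrac{a}{2|x|^2}$, which is enough for the energy--Poho\u zaev lower bound $\mathcal{I}-\tfrac13\mathcal{P}\ge\tfrac{b}{12}\|\nabla u\|_2^4$ but not for the two-sided dilation comparison. Under (V3) alone there is no reason why $t\mapsto\mathcal{P}(u_t)$ should change sign only once, so your ``dilation lemma'' and everything built on it collapse.

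This is precisely why the paper treats the two theorems with different machinery. For Theorem~\ref{thm1.1} the authors abandon direct minimisation on $\mathcal{M}$ and instead run the Jeanjean--Toland monotonicity trick: they embed $\mathcal{I}$ in a one-parameter family $\mathcal{I}_\lambda(u)=A(u)-\lambda\int F(u)$, obtain bounded Palais--Smale sequences at the mountain-pass level $c_\lambda$ for a.e.\ $\lambda\in[1/2,1]$, and prove the strict inequality $c_\lambda<m_\lambda^\infty$ for $\lambda$ near $1$ (using the IIP inequality only for the \emph{autonomous} limit functional, where it is available without any hypothesis on $V$). A global-compactness decomposition then yields a critical point $u_\lambda$ of $\mathcal{I}_\lambda$ at level $c_\lambda$; passing $\lambda_n\to1$ and using (V3)+Hardy for boundedness produces a critical point of $\mathcal{I}$, and a final minimisation over the critical set gives the ground state. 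If you want to salvage your outline, you would have to either assume (V4) (and then you are proving Theorem~\ref{thm1.2}), or replace the dilation-lemma/Lagrange-multiplier core with this perturbed-functional approach.
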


 \par
    To prove Theorem \ref{thm1.1}, we will use an idea from Jeanjean and Tanaka \cite{JT2}, that is an approximation procedure to obtain
 a bounded (PS)-sequence for $\mathcal{I}$, instead of starting directly from an arbitrary (PS)-sequence.
 More precisely, firstly for $\lambda\in [1/2, 1]$ we consider a family of functionals
  $\mathcal{I}_{\lambda} : H^1(\R^3) \rightarrow \R$ defined by
  \begin{equation}\label{Ilu}
   \mathcal{I}_{\lambda}(u)=\frac{1}{2}\int_{{\R}^3}\left[a|\nabla u|^2+V(x)u^2\right]\mathrm{d}x
            +\frac{b}{4}\left(\int_{\R^3}|\nabla u|^2\mathrm{d}x\right)^2
            -\lambda\int_{{\R}^3}F(u)\mathrm{d}x.
 \end{equation}
 These functionals have a Mountain Pass geometry, and denoting the corresponding Mountain Pass levels by $c_{\lambda}$.
 Let
 $$
   A(u)=\frac{1}{2}\int_{\R^3}\left[a|\nabla u|^2+V(x)u^2\right]\mathrm{d}x+\frac{b}{4}\left(\int_{\R^3}|\nabla u|^2\mathrm{d}x\right)^2, \ \ \ \
   B(u)=\int_{\R^3}F(u)\mathrm{d}x.
 $$
 Then $I_{\lambda}(u)=A(u)-\lambda B(u)$. Unfortunately, $B(u)$ is not sign definite under (F1)-(F3), it prevents us
 from employing Jeanjean's monotonicity trick \cite{Je} used in \cite{JT2}. Thanks to the work of Jeanjean and Toland \cite{JTo},
 $\mathcal{I}_{\lambda}$ still has a bounded (PS)-sequence $\{u_n(\lambda)\} \subset H^1(\R^3)$ at level $c_{\lambda}$ for almost
 every $\lambda\in [1/2,1]$. However, there is no more a monotone dependence of $c_{\lambda}$ upon $\lambda\in [1/2,1]$ in this case,
 while it plays a crucial role in Jeanjean's monotonicity trick. To show that the bounded sequence $\{u_n(\lambda)\}$ converges
 weakly to a nontrivial critical point of $\mathcal{I}_{\lambda}$, one usually has to establish the following strict inequality
 \begin{eqnarray}\label{ck0}
   c_{\lambda}< \inf_{\mathcal{K}_\lambda^{\infty}}\mathcal{I}_{\lambda}^{\infty},
 \end{eqnarray}
 where
 \begin{equation}\label{Il}
   \mathcal{I}_{\lambda}^{\infty}(u)=\frac{1}{2}\int_{{\R}^3}\left(a|\nabla u|^2+V_{\infty}u^2\right)\mathrm{d}x
       +\frac{b}{4}\left(\int_{\R^3}|\nabla u|^2\mathrm{d}x\right)^2-\lambda\int_{{\R}^3}F(u)\mathrm{d}x
 \end{equation}
 and
 \begin{eqnarray}\label{KI}
   \mathcal{K}_\lambda^{\infty}:=\left\{u\in H^1(\R^3)\setminus \{0\} : \ (\mathcal{I}_{\lambda}^{\infty})'(u)=0\right\}.
 \end{eqnarray}
 In view of the results in \cite{Az, Az1}, for every $\lambda\in [1/2, 1]$, there exists $w_{\lambda}^{\infty}\in \mathcal{K}_\lambda^{\infty}$
 such that $\mathcal{I}_{\lambda}^{\infty}(w_{\lambda}^{\infty})=\inf_{\mathcal{K}_\lambda^{\infty}}\mathcal{I}_{\lambda}^{\infty}$.
 Since $V(x)\le V_{\infty}$ but $V(x)\not\equiv V_{\infty}$, it is standard to show \eqref{ck0} if $w_{\lambda}^{\infty}>0$.
 However, there is no more information on the sign of $w_{\lambda}^{\infty}$ from the results in \cite{Az, Az1}. Therefore,
 it becomes nontrivial to show \eqref{ck0}. To overcome this difficulty we use a strategy introduced in \cite{TC3}. Let
 \begin{eqnarray}\label{JlL}
  \mathcal{P}_{\lambda}^{\infty}(u)
    =\frac{a}{2}\|\nabla u\|_2^2+\frac{3V_\infty}{2}\int_{{\R}^3}u^2\mathrm{d}x
    +\frac{b}{2}\left(\int_{\R^3}|\nabla u|^2\mathrm{d}x\right)^2-3\lambda\int_{{\R}^3}F(u)\mathrm{d}x
 \end{eqnarray}
 and
 \begin{eqnarray}\label{Ml}
   \mathcal{M}_{\lambda}^{\infty}:=\left\{u\in H^1(\R^3)\setminus \{0\}: \mathcal{P}_{\lambda}^{\infty}(u)=0\right\}.
 \end{eqnarray}
 We first prove that problem \eqref{KE4} has a solution $\bar{u}^{\infty}\in H^1(\R^3)$ such that $\mathcal{I}^{\infty}(\bar{u}^{\infty})=\inf_{\mathcal{M}^{\infty}}\mathcal{I}^{\infty}$.
 By means of the translation invariance for $\bar{u}^{\infty}$ and a crucial inequality related to $\mathcal{I}(u)$, $\mathcal{I}(u_t)$ and
 $\mathcal{P}(u)$ (the IIP inequality in short, see Lemma \ref{lem 2.2}, where $u_t(x)=u(x/t)$, it plays an important
 role in many places of this paper), we can find $\bar{\lambda}\in [1/2, 1)$ and prove directly the following crucial inequality
 \begin{eqnarray}\label{cm1}
   c_{\lambda}<m_{\lambda}^{\infty}:=\inf_{\mathcal{M}_{\lambda}^{\infty}}\mathcal{I}_{\lambda}^{\infty},
   \ \ \ \ \lambda\in (\bar{\lambda}, 1].
 \end{eqnarray}
 In particular, it is not required any information on sign of $\bar{u}^{\infty}$ in our arguments.
 Then applying \eqref{cm1} and a precise decomposition of bounded (PS)-sequences, we can get a nontrivial
 critical point $u_{\lambda}$ of $\mathcal{I}_{\lambda}$ which possesses energy $c_{\lambda}$  for almost every
 $\lambda\in (\bar{\lambda}, 1]$.

 \par
   In the proof of Theorem \ref{thm1.1}, a crucial step is to show that problem \eqref{KE4} has a solution $\bar{u}^{\infty}
 \in H^1(\R^3)$ such that $\mathcal{I}^{\infty}(\bar{u}^{\infty})=\inf_{\mathcal{M}^{\infty}}\mathcal{I}^{\infty}$. With the
 help of the Lions' concentration compactness, the IIP inequality established in Lemma \ref{lem 2.2}, the ``least energy squeeze
 approach" and some subtle analysis, we can prove a more general conclusion. In fact, we shall conclude that \eqref{KE} has a
 solution $\bar{u}\in \mathcal{M}$ such that $\mathcal{I}(\bar{u})=\inf_{\mathcal{M}}\mathcal{I}$ if $f$ satisfies (F1)-(F3)
 and $V$ satisfies (V1), (V2) and the following decay assumption on $V$:
 \begin{itemize}
 \item[(V4)] $V\in \mathcal{C}^1(\R^3, \R)$ and $t\mapsto 3V(tx)+\nabla V(tx)\cdot (tx)+\frac{a}{4t^{2}|x|^2}$ is nonincreasing on
 $(0,\infty)$ for every $x\in \R^3\setminus \{0\}$;
 \end{itemize}
 where
 \begin{equation}\label{Ne}
   \mathcal{M}:= \{u\in H^1(\R^3)\setminus \{0\} : \mathcal{P}(u)=0\}
 \end{equation}
 and
 \begin{eqnarray}\label{Jv}
   \mathcal{P}(u) & :=  & \frac{a}{2}\|\nabla u\|_2^2+\frac{1}{2}\int_{{\R}^3}[3V(x)+\nabla V(x)\cdot x]u^2\mathrm{d}x\nonumber\\
        &      & \ \ +\frac{b}{2}\|\nabla u\|_2^4-3\int_{{\R}^3}F(u)\mathrm{d}x.
 \end{eqnarray}
 Actually the equality $\mathcal{P}(u) = 0$ is nothing but the Poho\u zaev identity related with equation \eqref{KE}.
 More precisely, we have the following theorem.

 \begin{theorem}\label{thm1.2} Assume that $V$ and $f$ satisfy {\rm(V1), (V2), (V4)} and {\rm(F1)-(F3)}. Then
 problem \eqref{KE} has a solution $\bar{u}\in H^1(\R^3)$ such that $\mathcal{I}(\bar{u})=\inf_{\mathcal{M}}\mathcal{I}
 =\inf_{u\in \Lambda}\max_{t > 0}\mathcal{I}(u_t)>0$, where
 $$
   u_t(x):=u(x/t) \ \ \mbox{and} \ \
  \Lambda ：=\left\{u\in H^1(\R^3) : \int_{\R^3}\left[\frac{1}{2}V_{\infty}u^2-F(u)\right]\mathrm{d}x<0\right\}.
 $$
 \end{theorem}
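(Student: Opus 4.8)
The plan is to realise $m:=\inf_{\mathcal M}\mathcal I$ as the minimax value $\inf_{u\in\Lambda}\max_{t>0}\mathcal I(u_t)$ and then to produce a minimiser by concentration compactness. The two main tools are the IIP inequality (Lemma \ref{lem 2.2}), which I use in the form $\mathcal I(u)\ge\mathcal I(u_t)+\tfrac{1-t^3}{3}\mathcal P(u)$ for all $u\in H^1(\R^3)$ and $t>0$, with strict inequality when $t\ne1$ and $u\ne0$ (the difference dominating a positive multiple of $(t-1)^2\|\nabla u\|_2^4$), and the two elementary consequences of {\rm(V1),(V2),(V4)} that $V(x)\to V_\infty$ as $|x|\to\infty$ and $3V(x)+\nabla V(x)\cdot x+\tfrac{a}{4|x|^2}\ge 3V_\infty$ on $\R^3\setminus\{0\}$, used together with the Hardy inequality $\int_{\R^3}|x|^{-2}u^2\,\mathrm{d}x\le 4\|\nabla u\|_2^2$. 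First, for $u\in H^1(\R^3)\setminus\{0\}$ put $\phi_u(t)=\mathcal I(u_t)$; since $t\,\phi_u'(t)=\mathcal P(u_t)$, the critical points of $\phi_u$ are exactly the $t$ with $u_t\in\mathcal M$, and because $u_t\to0$ in $H^1(\R^3)$ as $t\to0^+$ while, for $u\in\Lambda$, $\phi_u(t)\le\tfrac{at}{2}\|\nabla u\|_2^2+\tfrac{bt^2}{4}\|\nabla u\|_2^4+t^3\int_{\R^3}[\tfrac12V_\infty u^2-F(u)]\,\mathrm{d}x\to-\infty$ as $t\to\infty$, the map $\phi_u$ attains a positive maximum at a unique point $t_u$ (uniqueness via the IIP inequality), with $(u)_{t_u}\in\mathcal M$. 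Hence $\mathcal M\ne\emptyset$; since {\rm(F3)} yields a point of $\Lambda$ by a Berestycki--Lions test function and $\mathcal M\subset\Lambda$ (for $v\in\mathcal M$ the identity $\mathcal P(v)=0$ together with the above consequence of {\rm(V4)} and Hardy force $v\in\Lambda$), one obtains $m=\inf_{\mathcal M}\mathcal I=\inf_{u\in\Lambda}\max_{t>0}\mathcal I(u_t)$. Moreover, for $u\in\mathcal M$ one has $\mathcal I(u)=\mathcal I(u)-\tfrac13\mathcal P(u)=\tfrac a3\|\nabla u\|_2^2+\tfrac b{12}\|\nabla u\|_2^4-\tfrac16\int_{\R^3}\nabla V(x)\cdot x\,u^2\,\mathrm{d}x$, and the same consequence of {\rm(V4)} with Hardy gives $\tfrac a6\|\nabla u\|_2^2\le\mathcal I(u)\le\tfrac a2\|\nabla u\|_2^2+\tfrac b{12}\|\nabla u\|_2^4$ on $\mathcal M$; combined with {\rm(F1),(F2)} this yields $\inf_{\mathcal M}\|\nabla u\|_2>0$, hence $m>0$.

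Next, take $u_n\in\mathcal M$ with $\mathcal I(u_n)\to m$. The two-sided bound above makes $\{\|\nabla u_n\|_2\}$ bounded, and then $\mathcal P(u_n)=0$ with the {\rm(V4)}-consequence bounds $\{\|u_n\|_2\}$, so $\{u_n\}$ is bounded in $H^1(\R^3)$. Were $\{u_n\}$ vanishing in Lions' sense, $\int_{\R^3}F(u_n)\to0$ would follow, whence $\mathcal P(u_n)=0$ and Hardy give $\|\nabla u_n\|_2\to0$, contradicting $m>0$; hence there are $y_n\in\R^3$ and $\delta>0$ with $\int_{B_1(y_n)}u_n^2\,\mathrm{d}x\ge\delta$, and, along a subsequence, $u_n(\cdot+y_n)\rightharpoonup\bar v\ne0$ in $H^1(\R^3)$. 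When $V\not\equiv V_\infty$ I first record the strict gap $m<m^\infty:=\inf_{\mathcal M^\infty}\mathcal I^\infty$: the minimiser $\bar u^\infty$ of $\mathcal I^\infty$ on $\mathcal M^\infty$ (which exists---this is the autonomous case $V\equiv V_\infty$, cf.\ \cite{Az,Az1}) can be scaled into $\mathcal M$, and since $V\le V_\infty$ with $V<V_\infty$ on an open set where $\bar u^\infty\ne0$, one gets $m\le\mathcal I((\bar u^\infty)_{t_0})<m^\infty$. Feeding this into a profile decomposition of $\{u_n\}$---in which the Kirchhoff term produces only nonnegative cross terms, while any bump escaping to spatial infinity solves \eqref{KE4} (hence $\mathcal P^\infty=0$ for it) and carries energy $\ge m^\infty>m$---forces all the mass to stay in a bounded region, i.e.\ $\{y_n\}$ is bounded; when $V\equiv V_\infty$ this is immediate by translation invariance. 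Thus we may assume $u_n\rightharpoonup\bar u\ne0$ in $H^1(\R^3)$.

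Now comes the least-energy squeeze. Writing $u_n=\bar u+w_n$ with $w_n\rightharpoonup0$, Brezis--Lieb (for $\int_{\R^3}F$, via {\rm(F1)}) and weak $L^2$-convergence (for the quadratic terms, via $V\in L^\infty$) give $m=\mathcal I(\bar u)+\mathcal I(w_n)+\tfrac b2\|\nabla\bar u\|_2^2\|\nabla w_n\|_2^2+o(1)$ and $0=\mathcal P(\bar u)+\mathcal P(w_n)+b\|\nabla\bar u\|_2^2\|\nabla w_n\|_2^2+o(1)$. A further concentration-compactness step shows the surviving bumps of $w_n$ escape to infinity and solve \eqref{KE4}, so $\mathcal P^\infty=0$ for each and, by the {\rm(V4)}-consequence and Hardy, $\liminf_n\mathcal P(w_n)\ge0$ and $\liminf_n(\mathcal I(w_n)-\tfrac13\mathcal P(w_n))\ge0$; consequently $\mathcal P(\bar u)\le0$ and $(\mathcal I-\tfrac13\mathcal P)(\bar u)\le\liminf_n(\mathcal I-\tfrac13\mathcal P)(u_n)=\liminf_n\mathcal I(u_n)=m$. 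Since $\mathcal P(\bar u)\le0$, the maximum point $t_0=t_{\bar u}$ of $\phi_{\bar u}$ satisfies $t_0\le1$ and $(\bar u)_{t_0}\in\mathcal M$. Using the weak lower semicontinuity of $u\mapsto\mathcal I(u)-\tfrac13\mathcal P(u)$ and the strict monotonicity of $t\mapsto\mathcal I((\bar u)_t)-\tfrac13\mathcal P((\bar u)_t)$---both read off from {\rm(V4)} and Hardy---one squeezes $m\le\mathcal I((\bar u)_{t_0})=(\mathcal I-\tfrac13\mathcal P)((\bar u)_{t_0})\le(\mathcal I-\tfrac13\mathcal P)(\bar u)\le m$, so every inequality is an equality; strict monotonicity then forces $t_0=1$, that is $\mathcal P(\bar u)=0$, and $\mathcal I(\bar u)=m$.

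Finally, the minimiser $\bar u$ of $\mathcal I$ over $\mathcal M$ is a critical point of $\mathcal I$: if not, a standard deformation argument---move $\bar u$ slightly along a pseudo-gradient of $\mathcal I$, then re-project onto $\mathcal M$ via the scaling $u\mapsto u_{t_u}$, whose parameter depends continuously on $u$---produces a point of $\mathcal M$ with energy $<m$, contradicting minimality. Hence $\bar u$ solves \eqref{KE} and $\mathcal I(\bar u)=\inf_{\mathcal M}\mathcal I=\inf_{u\in\Lambda}\max_{t>0}\mathcal I(u_t)>0$, which is Theorem \ref{thm1.2}. I expect the hardest part to be the compactness analysis of the two preceding steps: carrying out the Brezis--Lieb / profile decomposition for the \emph{nonlocal} functional $\mathcal I$, where $\tfrac b4\|\nabla u\|_2^4$ fails to be additive under $u_n=\bar u+w_n$ and the potential terms $\int_{\R^3}[3V(x)+\nabla V(x)\cdot x]u^2\,\mathrm{d}x$ are non-autonomous, and extracting from {\rm(V4)} exactly the monotonicity and lower-semicontinuity properties that make both the energy gap $m<m^\infty$ and the squeeze work.
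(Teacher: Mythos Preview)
Your overall architecture---the minimax characterisation via $\Lambda$, the positivity of $m$, a least-energy squeeze, and the deformation argument for criticality---follows the paper's plan. The serious gap is in the compactness step: you assert twice that ``any bump escaping to spatial infinity solves \eqref{KE4}'' (hence has $\mathcal P^\infty=0$ and energy $\ge m^\infty$). This would hold for a Palais--Smale sequence, but your $\{u_n\}$ is only a \emph{minimising} sequence on $\mathcal M$; the weak limits of its translated pieces have no reason to be critical points of $\mathcal I^\infty$. Consequently neither your exclusion of escape (``$\{y_n\}$ is bounded'') nor your derivation of $\liminf_n\mathcal P(w_n)\ge0$, and hence of $\mathcal P(\bar u)\le0$, is justified as written.

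The paper avoids this entirely. After the Brezis--Lieb splitting of $\mathcal I$ and $\mathcal P$ (Lemma~\ref{lem 2.10}) it argues by contradiction: if $\mathcal P(\bar u)>0$, then the nonnegative Kirchhoff cross term forces $\mathcal P(v_n)<0$ for large $n$, so by Lemma~\ref{lem 2.6} one may rescale $v_n$ itself into $\mathcal M$; applying the IIP inequality to $v_n$ yields $m-\Psi(\bar u)+o(1)\ge m$ with $\Psi(\bar u)>0$, a contradiction. Thus $\mathcal P(\bar u)\le0$ without any claim on the profiles of $v_n$. The case $\bar u=0$ is treated the same way after passing to $\mathcal I^\infty,\mathcal P^\infty$ (legitimate since $u_n\rightharpoonup0$ kills the $V_\infty-V$ and $\nabla V\cdot x$ integrals) and translating; here one rescales the remainder $w_n$ into $\mathcal M^\infty$ and needs only the \emph{weak} inequality $m\le m^\infty$ (Lemma~\ref{lem 2.12}). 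In particular the paper does not show $\{y_n\}$ is bounded: when $\bar u=0$ it produces the minimiser as a rescaling $\hat u_{\hat t}$ of the translated limit, using $m\le\mathcal I(\hat u_{\hat t})\le\mathcal I^\infty(\hat u_{\hat t})\le\mathcal I^\infty(\hat u)=m$. Thus no strict gap $m<m^\infty$ is needed and Azzollini's autonomous result is not used as input; the proof covers $V\equiv V_\infty$ uniformly.

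A minor correction: under {\rm(V4)} one gets $\nabla V(x)\cdot x\le a/(2|x|^2)$ (from \eqref{B20} at $t=0$), which with Hardy only yields $\mathcal I(u)-\tfrac13\mathcal P(u)\ge\tfrac b{12}\|\nabla u\|_2^4$ on $\mathcal M$, not your $(a/6)\|\nabla u\|_2^2$ lower bound; this still suffices for $m>0$ once $\inf_{\mathcal M}\|\nabla u\|_2>0$ is established.
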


 \par
   As a consequence of Theorem \ref{thm1.2}, we have the following corollary.

 \begin{corollary}\label{cor1.3} Assume that $f$ satisfies {\rm(F1)-(F3)}. Then problem \eqref{KE4} has a solution
 $\bar{u}\in H^1(\R^3)$ such that $\mathcal{I}^{\infty}(\bar{u})=\inf_{\mathcal{M}^{\infty}}\mathcal{I}^{\infty}
 =\inf_{u\in \Lambda}\max_{t > 0}\mathcal{I}^{\infty}(u_t)>0$.
 \end{corollary}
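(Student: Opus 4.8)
The plan is to obtain Corollary \ref{cor1.3} as a direct specialization of Theorem \ref{thm1.2} to the constant potential $V\equiv V_{\infty}$. Indeed, when $V(x)\equiv V_{\infty}$ the equation \eqref{KE} becomes precisely the limit problem \eqref{KE4}; the energy functional $\mathcal{I}$ in \eqref{IU} coincides with $\mathcal{I}^{\infty}$ in \eqref{Ii}; and, since $\nabla V\equiv 0$ forces $3V(x)+\nabla V(x)\cdot x=3V_{\infty}$, the Poho\u zaev functional $\mathcal{P}$ in \eqref{Jv} reduces to $\mathcal{P}^{\infty}$ in \eqref{Pi}, whence $\mathcal{M}=\mathcal{M}^{\infty}$. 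Note also that the set $\Lambda$ appearing in Theorem \ref{thm1.2} is already defined through $V_{\infty}$, so it is unchanged in the autonomous setting.

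It then remains only to check that the constant function $V\equiv V_{\infty}$ satisfies {\rm(V1), (V2)} and {\rm(V4)}. The first two are immediate: $V_{\infty}\ge 0$ is a fixed constant and $\liminf_{|y|\to\infty}V(y)=V_{\infty}=V(x)$ for every $x\in\R^3$. For {\rm(V4)}, clearly $V\in\mathcal{C}^1(\R^3,\R)$, and since $\nabla V\equiv 0$ we have, for each fixed $x\in\R^3\setminus\{0\}$,
\[
  3V(tx)+\nabla V(tx)\cdot(tx)+\frac{a}{4t^{2}|x|^2}=3V_{\infty}+\frac{a}{4t^{2}|x|^2},
\]
and the $t$-derivative of the right-hand side equals $-\frac{a}{2t^{3}|x|^2}<0$ on $(0,\infty)$; hence the map is nonincreasing on $(0,\infty)$, as required.

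With the hypotheses of Theorem \ref{thm1.2} verified, that theorem immediately produces a solution $\bar{u}\in H^1(\R^3)$ of \eqref{KE4} with $\mathcal{I}^{\infty}(\bar{u})=\inf_{\mathcal{M}^{\infty}}\mathcal{I}^{\infty}=\inf_{u\in\Lambda}\max_{t>0}\mathcal{I}^{\infty}(u_t)>0$, which is exactly the statement of the corollary. The reduction itself is routine; the only substantive point is the verification of {\rm(V4)} for a constant potential, and here the extra term $\frac{a}{4t^{2}|x|^2}$ deliberately built into {\rm(V4)} is precisely what keeps the monotonicity valid even though a constant potential contributes a vanishing gradient — so Corollary \ref{cor1.3} is genuinely contained in Theorem \ref{thm1.2} rather than requiring a separate argument.
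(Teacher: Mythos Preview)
Your proposal is correct and is exactly the approach the paper takes: the paper presents Corollary \ref{cor1.3} as an immediate consequence of Theorem \ref{thm1.2}, noting at the start of Section 2 that ``$V(x)\equiv V_{\infty}$ satisfies (V1), (V2) and (V4), thus all conclusions on $\mathcal{I}$ are also true for $\mathcal{I}^{\infty}$.'' Your explicit verification of {\rm(V4)} for the constant potential makes rigorous precisely what the paper only asserts.
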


 \begin{remark}\label{rem1.5} As a consequence of Theorem \ref{thm1.2}, the ground state value $m:=\inf_{\mathcal{M}}\mathcal{I}$
 has a minimax  characterization $ m=\inf_{u\in \Lambda}\max_{t > 0}\mathcal{I}(u_t)$ which is much simpler than the usual
 characterizations related to the Mountain Pass level.
 \end{remark}

 \par
   Our approach to show Theorem \ref{thm1.2} is different from the ones used in \cite{Az, Az1}. Moreover, Theorem \ref{thm1.2}
 generalizes the Azzollini's results in \cite{Az,Az1} on autonomous equation \eqref{KE4} to \eqref{KE} with $V\ne$ constant.
 In particular, such an approach could be useful for the study of other problems where radial symmetry of bounded sequence
 either fails or is not readily available.

 \begin{remark}\label{rem1.1}
 There are indeed many functions which satisfy {\rm (V1)-(V3)}. For example
 \par
 i). $V(x) =\alpha-\frac{\beta}{|x|^{\sigma}+1}$ with $\alpha>\beta>0$, $\sigma\ge 2$ and $a\ge 2\sigma\beta$;
 \par
 ii). $V(x) =\alpha-\frac{\beta\sin^2 |x|}{|x|^{3}+1}$ with $\alpha>\beta>0$ and $a\ge 4\beta$;
 \par
 iii). $V(x) =\alpha-\beta e^{-|x|^{\sigma}}$ with $\alpha>\beta>0$, $\sigma>0$  and $ae^{(\sigma+2)/\sigma}\ge 2\beta(\sigma+2)^{(\sigma+2)/\sigma}\sigma^{-2/\sigma}$.

 \par\noindent
 In particular, if $\alpha>\beta>0$, $\sigma\ge 2$ and $a\ge 2\sigma\beta(3+\sigma)$, then $V(x) =\alpha-\frac{\beta}{|x|^{\sigma}+1}$
 also satisfies (V4).

 \end{remark}

   Applying Theorem \ref{thm1.1} to the following perturbed problem:
 \begin{equation}\label{KE8}
 \left\{
   \begin{array}{ll}
     -\left(a+b\int_{\R^3}|\nabla u|^2\mathrm{d}x\right)\triangle u+[V_{\infty}-\varepsilon h(x)]u=f(u), & x\in \R^3; \\
     u\in H^1(\R^3),
   \end{array}
 \right.
 \end{equation}
 where $V_{\infty}$ is a positive constant and the function $h \in \mathcal{C}^1(\R^3, \R)$ verifies:

 \vskip2mm
 \noindent
 (H1)\ \ $h(x)\ge 0$ for all $x\in \R^3$ and $\lim_{|x|\to\infty}h(x)=0$;

 \vskip2mm
 \noindent
 (H2)\ \ $\sup_{x\in \R^3}\left[-|x|^2\nabla h(x)\cdot x\right]<\infty$.

 \vskip2mm
 \noindent
 Then we have the following corollary.

 \begin{corollary}\label{cor1.4} Assume that $h$ and $f$ satisfy {\rm(H1), (H2)} and {\rm(F1)-(F3)}. Then there
 exists a constant $\varepsilon_0>0$ such that problem \eqref{KE8} has a ground state solution $\bar{u}_{\varepsilon}\in
 H^1(\R^3)\setminus \{0\}$ for all $0<\varepsilon\le \varepsilon_0$.
 \end{corollary}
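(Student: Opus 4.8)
The plan is to derive Corollary~\ref{cor1.4} as an application of Theorem~\ref{thm1.1}, so the entire task reduces to checking that for $\varepsilon$ small enough the potential $V_\varepsilon(x):=V_\infty-\varepsilon h(x)$ satisfies the hypotheses (V1)--(V3) (the nonlinearity already satisfies (F1)--(F3) by assumption). First I would verify (V1): since $V_\infty>0$ is constant, $h\in\mathcal{C}^1(\R^3,\R)$ and, by (H1), $h(x)\to 0$ as $|x|\to\infty$ with $h\geq 0$, the function $h$ is bounded on $\R^3$ (it is continuous and has a finite limit at infinity), say $0\le h(x)\le M$ for all $x$. Hence $V_\varepsilon(x)=V_\infty-\varepsilon h(x)\ge V_\infty-\varepsilon M$, which is $\ge 0$ once $\varepsilon\le V_\infty/M$; together with continuity of $V_\varepsilon$ this gives (V1). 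Next, (V2): because $h(x)\to 0$ at infinity we get $\liminf_{|y|\to\infty}V_\varepsilon(y)=V_\infty$, while $V_\varepsilon(x)=V_\infty-\varepsilon h(x)\le V_\infty$ for every $x$ since $\varepsilon>0$ and $h\ge 0$; thus $(V_\varepsilon)_\infty=V_\infty\ge V_\varepsilon(x)$ for all $x$, giving (V2).

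The one condition requiring a smallness argument is (V3), namely $\nabla V_\varepsilon(x)\cdot x\le \dfrac{a}{2|x|^2}$ for all $x\in\R^3\setminus\{0\}$. Here $\nabla V_\varepsilon(x)\cdot x=-\varepsilon\,\nabla h(x)\cdot x$, so the requirement reads
\begin{equation*}
  -\varepsilon\,\nabla h(x)\cdot x\le \frac{a}{2|x|^2},\qquad \forall\,x\in\R^3\setminus\{0\}.
\end{equation*}
By (H2) there is a constant $C_h:=\sup_{x\in\R^3}\bigl[-|x|^2\nabla h(x)\cdot x\bigr]<\infty$, i.e. $-\nabla h(x)\cdot x\le C_h/|x|^2$ for all $x\ne 0$ (note also $C_h\ge 0$, seen by letting $|x|\to\infty$ or evaluating near a critical point of $h$). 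Therefore $-\varepsilon\nabla h(x)\cdot x\le \varepsilon C_h/|x|^2$, and the desired inequality holds as soon as $\varepsilon C_h\le a/2$. So I would set
\begin{equation*}
  \varepsilon_0:=\min\left\{\frac{V_\infty}{M},\ \frac{a}{2C_h}\right\}
\end{equation*}
(with the convention $a/(2C_h)=+\infty$ if $C_h=0$), and then for every $0<\varepsilon\le\varepsilon_0$ the triple of potential conditions (V1)--(V3) is met by $V_\varepsilon$.

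With (V1)--(V3) and (F1)--(F3) in force, Theorem~\ref{thm1.1} applies verbatim to \eqref{KE8} (which is exactly \eqref{KE} with $V=V_\varepsilon$) and yields a ground state solution $\bar{u}_\varepsilon\in H^1(\R^3)\setminus\{0\}$, for each $0<\varepsilon\le\varepsilon_0$. This completes the proof. The only mildly delicate point is the bookkeeping that $h$ is genuinely bounded: this uses continuity plus the limit condition in (H1), and gives the finite constant $M$ needed for (V1); everything else is a direct substitution of $\nabla V_\varepsilon\cdot x=-\varepsilon\nabla h\cdot x$ into (V3) and reading off the smallness threshold from (H2). I do not expect any real obstacle here — the content of the corollary is entirely in Theorem~\ref{thm1.1}, and (H1)--(H2) are tailored precisely so that the perturbed potentials fall into the admissible class for $\varepsilon$ small.
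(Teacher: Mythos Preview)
Your proposal is correct and matches the paper's approach exactly: the paper states Corollary~\ref{cor1.4} as a direct consequence of Theorem~\ref{thm1.1} (introducing it with ``Applying Theorem~\ref{thm1.1} to the following perturbed problem'') without writing out the verification, and you have supplied precisely the routine check that $V_\varepsilon(x)=V_\infty-\varepsilon h(x)$ satisfies (V1)--(V3) for small $\varepsilon$. The only cosmetic remark is that $C_h\ge 0$ follows most simply from evaluating $-|x|^2\nabla h(x)\cdot x$ at $x=0$.
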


   Throughout the paper we make use of the following notations:

 \vskip4mm
 \par
     $\spadesuit$ \ $H^1(\R^3)$ denotes the usual Sobolev space equipped with the inner product and norm
 $$
   (u, v)=\int_{\R^3}(\nabla u\cdot \nabla v+uv)\mathrm{d}x, \ \ \|u\|=(u, u)^{1/2},
     \ \ \forall \ u,v\in H^1(\R^3);
 $$

 \par
     $\spadesuit$ \ $H_r^1(\R^3)=\{u\in H^1(\R^3): |x|=|y|\Rightarrow u(x)=u(y)\}$;

 \par
     $\spadesuit$ \ $L^s(\R^3) (1\le s< \infty)$  denotes the Lebesgue space with the norm $\|u\|_s
 =\left(\int_{\R^3}|u|^s\mathrm{d}x\right)^{1/s}$;

 \par
     $\spadesuit$ \ For any $u\in H^1(\R^3)\setminus \{0\}$, $u_t(x):=u(t^{-1}x)$ for $t>0$;

 \par
     $\spadesuit$ \ For any $x\in \R^3$ and $r>0$, $B_r(x):=\{y\in \R^3: |y-x|<r \}$;

 \par
     $\spadesuit$ \ $C_1, C_2,\cdots$ denote positive constants possibly different in different places.

 \vskip4mm
   The rest of the paper is organized as follows. In Section 2, we give some preliminaries, and give the proof
 of Theorem \ref{thm1.2}. In Section 3, we complete the proof of Theorem \ref{thm1.1}.

 \vskip6mm

 {\section{Proof of Theorem \ref{thm1.2}}}
 \setcounter{equation}{0}

 \setcounter{equation}{0}
 \vskip2mm
 \par
   In this section, we give the proof of Theorem \ref{thm1.2}. To this end, we give some useful lemmas. Since $V(x)\equiv V_{\infty}$
 satisfies (V1), (V2) and (V4), thus all conclusions on $\mathcal{I}$ are also true for $\mathcal{I}^{\infty}$.  For \eqref{KE4},
 we always assume that $V_{\infty}>0$. First, by a simple calculation, we can verify Lemma \ref{lem 2.1}.

  \begin{lemma}\label{lem 2.1}
 Assume that {\rm (V4)} holds. Then one has
 \begin{equation}\label{B20}
   3t^3[V(x)-V(tx)]-(1-t^3)\nabla V(x)\cdot x \ge  -\frac{a(1-t)^2(2+t)}{4|x|^2}, \ \ \ \ \forall \ t\ge 0,
      \ \ x\in \R^3\setminus \{0\}.
 \end{equation}
\end{lemma}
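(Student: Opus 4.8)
The plan is to fix $x\in\R^3\setminus\{0\}$ and view the quantity in \eqref{B20} as a function of the single variable $t$,
\[
  \Phi(t):=3t^3[V(x)-V(tx)]-(1-t^3)\nabla V(x)\cdot x+\frac{a(1-t)^2(2+t)}{4|x|^2},\qquad t\ge 0 ,
\]
so that \eqref{B20} says exactly $\Phi(t)\ge 0$. Since $\Phi(1)=0$, it suffices to show that $t=1$ is a global minimum of $\Phi$ on $[0,\infty)$, and I would do this through a one-variable monotonicity argument driven by (V4). The relevant object is the function
\[
  g(s):=3V(sx)+\nabla V(sx)\cdot(sx)+\frac{a}{4s^{2}|x|^{2}},\qquad s>0 ,
\]
which is nonincreasing by hypothesis; I record for later use that $3V(x)+\nabla V(x)\cdot x=g(1)-\frac{a}{4|x|^2}$ and $3V(tx)+\nabla V(tx)\cdot(tx)=g(t)-\frac{a}{4t^{2}|x|^{2}}$.

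The main step is to differentiate $\Phi$ on $(0,\infty)$---legitimate since $V\in\mathcal{C}^1$---and simplify. Using $\frac{d}{dt}\big(t^{3}V(tx)\big)=t^{2}\big(3V(tx)+\nabla V(tx)\cdot(tx)\big)$ and the expansion $(1-t)^{2}(2+t)=t^{3}-3t+2$, I would first reach
\[
  \Phi'(t)=3t^{2}\big[3V(x)+\nabla V(x)\cdot x\big]-3t^{2}\big[3V(tx)+\nabla V(tx)\cdot(tx)\big]+\frac{3a(t^{2}-1)}{4|x|^{2}} ,
\]
and then, substituting the two identities for $g$ recorded above, the loose $a/|x|^{2}$ terms cancel exactly, leaving the clean formula
\[
  \Phi'(t)=3t^{2}\big[g(1)-g(t)\big],\qquad t>0 .
\]
This algebraic collapse is the crux of the argument; I expect the bookkeeping behind it to be essentially the only obstacle in the proof.

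Once this identity is established, the rest is routine. Because $g$ is nonincreasing, $g(t)\ge g(1)$ for $0<t<1$ and $g(t)\le g(1)$ for $t>1$, hence $\Phi'\le 0$ on $(0,1)$ and $\Phi'\ge 0$ on $(1,\infty)$; thus $\Phi$ is nonincreasing on $(0,1]$ and nondecreasing on $[1,\infty)$, so $\Phi(t)\ge\Phi(1)=0$ for all $t>0$. Finally, $\Phi$ is continuous at $t=0$ (again since $V\in\mathcal{C}^1$), so $\Phi(0)=\lim_{t\to 0^{+}}\Phi(t)\ge 0$ as well, and \eqref{B20} follows for every $t\ge 0$.
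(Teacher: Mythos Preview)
Your proof is correct. The paper itself omits the argument entirely, saying only ``by a simple calculation, we can verify Lemma \ref{lem 2.1}''; your derivation of the identity $\Phi'(t)=3t^{2}[g(1)-g(t)]$ and the ensuing monotonicity argument is exactly the natural calculation that (V4) is designed for, and it fills in the details the authors left to the reader.
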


 \begin{lemma}\label{lem 2.2}
 Assume that {\rm (V1), (V2), (V4), (F1)} and {\rm (F2)} hold. Then
 \begin{eqnarray}\label{B21}
   \mathcal{I}(u)
    & \ge & \mathcal{I}\left(u_t\right)+\frac{1-t^3}{3}\mathcal{P}(u)+\frac{b(1-t)^2(1+2t)}{12}\|\nabla u\|_2^4,
             \ \ \ \ \forall \ u\in H^1(\R^3), \ \ t > 0. \ \ \ \
 \end{eqnarray}
\end{lemma}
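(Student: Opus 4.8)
The plan is a direct computation followed by an application of Lemma \ref{lem 2.1} and of Hardy's inequality.

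First I would record the scaling identities for $u_t(x)=u(x/t)$ in $\R^3$: namely $\|\nabla u_t\|_2^2=t\|\nabla u\|_2^2$, $\|u_t\|_2^2=t^3\|u\|_2^2$, $\int_{\R^3}F(u_t)\mathrm{d}x=t^3\int_{\R^3}F(u)\mathrm{d}x$, and, after the substitution $x\mapsto tx$, $\int_{\R^3}V(x)u_t^2\mathrm{d}x=t^3\int_{\R^3}V(tx)u^2\mathrm{d}x$. Plugging these into \eqref{IU} gives a closed expression for $\mathcal{I}(u_t)$ in terms of $u$ and $t$.

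Next I would form $\mathcal{I}(u)-\mathcal{I}(u_t)-\frac{1-t^3}{3}\mathcal{P}(u)$ and collect terms. The contributions containing $\int_{\R^3}F(u)\mathrm{d}x$ cancel identically; the terms in $\|\nabla u\|_2^2$ assemble with coefficient $\frac{a}{6}(2-3t+t^3)$ and those in $\|\nabla u\|_2^4$ with coefficient $\frac{b}{12}(1-3t^2+2t^3)$. Using the factorizations $2-3t+t^3=(1-t)^2(2+t)$ and $1-3t^2+2t^3=(1-t)^2(1+2t)$, the quartic term already equals the desired $\frac{b(1-t)^2(1+2t)}{12}\|\nabla u\|_2^4$, while the quadratic gradient term yields a surplus $+\frac{a(1-t)^2(2+t)}{6}\|\nabla u\|_2^2$, and the remaining potential contribution is
\[
  \frac{1}{6}\int_{\R^3}\left\{3t^3[V(x)-V(tx)]-(1-t^3)\nabla V(x)\cdot x\right\}u^2\,\mathrm{d}x .
\]

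The crucial step is to estimate this last integral from below. Lemma \ref{lem 2.1} shows its integrand is $\ge -\frac{a(1-t)^2(2+t)}{4|x|^2}u^2$, so the potential contribution is at least $-\frac{a(1-t)^2(2+t)}{24}\int_{\R^3}|x|^{-2}u^2\,\mathrm{d}x$; then the Hardy inequality $\int_{\R^3}|x|^{-2}u^2\,\mathrm{d}x\le 4\|\nabla u\|_2^2$ bounds it below by $-\frac{a(1-t)^2(2+t)}{6}\|\nabla u\|_2^2$, which cancels the surplus quadratic term exactly. What is left is precisely $\frac{b(1-t)^2(1+2t)}{12}\|\nabla u\|_2^4$, i.e. \eqref{B21}. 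I expect the only delicate point to be that the Hardy constant $4$ for $N=3$ is exactly the one needed for this cancellation, which is why (V4) carries the sharp factor $a/4$; the case $u\equiv 0$ is trivial and all integrals involved are finite under (V1), (V2), (F1), (F2).
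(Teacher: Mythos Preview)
Your proposal is correct and follows essentially the same approach as the paper: both compute $\mathcal{I}(u)-\mathcal{I}(u_t)$ via the scaling identities, extract $\frac{1-t^3}{3}\mathcal{P}(u)$, factor the leftover polynomial coefficients as $(1-t)^2(2+t)$ and $(1-t)^2(1+2t)$, and then combine Lemma~\ref{lem 2.1} with Hardy's inequality so that the $a$--terms cancel exactly, leaving the quartic remainder. The only cosmetic difference is that the paper first writes out $\mathcal{I}(u)-\mathcal{I}(u_t)$ and then identifies the $\mathcal{P}(u)$ piece, whereas you subtract $\frac{1-t^3}{3}\mathcal{P}(u)$ from the outset; the computations are identical.
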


\begin{proof} According to Hardy inequality, we have
 \begin{equation}\label{B22}
  \|\nabla u\|_2^2 \ge \frac{1}{4}\int_{\R^3}\frac{u^2}{|x|^2}\mathrm{d}x,  \ \ \forall \ u\in H^1(\R^3).
 \end{equation}
 Note that
 \begin{equation}\label{B23}
   \mathcal{I}\left(u_t\right) = \frac{at}{2}\|\nabla u\|_2^2+\frac{t^3}{2}\int_{{\R}^3}V(tx)u^2\mathrm{d}x
                   +\frac{bt^2}{4}\|\nabla u\|_2^4 -t^3\int_{{\R}^3}F(u)\mathrm{d}x.
 \end{equation}
 Thus, by \eqref{IU}, \eqref{Jv}, \eqref{B20}, \eqref{B22} and \eqref{B23}, one has
 \begin{eqnarray*}
   &     & \mathcal{I}(u)-\mathcal{I}\left(u_t\right)\\
   &  =  & \frac{a(1-t)}{2}\|\nabla u\|_2^2+\frac{1}{2}\int_{{\R}^3}\left[V(x)-t^3V(tx)\right]u^2\mathrm{d}x
             +\frac{b(1-t^2)}{4}\|\nabla u\|_2^4\\
   &     & +(t^3-1)\int_{\R^3}F(u)\mathrm{d}x\\
   &  =  & \frac{1-t^3}{3}\left\{\frac{a}{2}\|\nabla u\|_2^2+\frac{1}{2}\int_{{\R}^3}[3V(x)+\nabla V(x)\cdot x]u^2\mathrm{d}x
            +\frac{b}{2}\|\nabla u\|_2^4-3\int_{{\R}^3}F(u)\mathrm{d}x\right\}\\
   &     & +\frac{a(1-t)^2(2+t)}{6}\|\nabla u\|_2^2+\frac{1}{6}\int_{{\R}^3}\left\{3t^3[V(x)-V(tx)]
             -(1-t^3)\nabla V(x)\cdot x\right\}u^2\mathrm{d}x\\
   &     &   +\frac{b(1-t)^2(1+2t)}{12}\|\nabla u\|_2^4\\
   & \ge & \frac{1-t^3}{3}\mathcal{P}(u)+\frac{b(1-t)^2(1+2t)}{12}\|\nabla u\|_2^4.
 \end{eqnarray*}
 This shows that \eqref{B21} holds.
 \end{proof}

 \vskip4mm
 \par
   From Lemma \ref{lem 2.2}, we have the following two corollaries.

\begin{corollary}\label{cor 2.3}
 Assume that {\rm (F1)} and {\rm (F2)} hold. Then
 \begin{eqnarray}\label{B25}
   \mathcal{I}^{\infty}(u)
     & \ge & \mathcal{I}^{\infty}\left(u_t\right)+\frac{1-t^3}{3}\mathcal{P}^{\infty}(u)+\frac{b(1-t)^2(1+2t)}{12}\|\nabla u\|_2^4,\nonumber\\
     &     &  \ \ \ \ \forall \ u\in H^1(\R^3), \ \ t > 0.
 \end{eqnarray}

\end{corollary}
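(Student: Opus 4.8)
The plan is to obtain Corollary~\ref{cor 2.3} as the special case $V(x)\equiv V_\infty$ of Lemma~\ref{lem 2.2}, so the only real work is to check that the constant potential $V\equiv V_\infty$ satisfies the hypotheses (V1), (V2) and (V4) invoked there. Since $V_\infty>0$ is a positive constant, $V\equiv V_\infty\in\mathcal{C}(\R^3,[0,\infty))$, which is (V1); and $\liminf_{|y|\to\infty}V(y)=V_\infty=V(x)$ for every $x\in\R^3$, which is (V2). For (V4), note $V\equiv V_\infty\in\mathcal{C}^1(\R^3,\R)$ with $\nabla V\equiv 0$, so the map appearing in (V4) reduces to $t\mapsto 3V_\infty+\frac{a}{4t^2|x|^2}$, which is nonincreasing on $(0,\infty)$ because $t\mapsto t^{-2}$ is decreasing there. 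Thus all the structural assumptions of Lemma~\ref{lem 2.2} are met, and (F1), (F2) are exactly the hypotheses of the corollary.

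With these verifications in hand, Lemma~\ref{lem 2.2} applies to the energy functional associated with $V\equiv V_\infty$, which by \eqref{IU} is precisely $\mathcal{I}^\infty$ of \eqref{Ii}. Moreover, substituting $V\equiv V_\infty$ (hence $\nabla V(x)\cdot x\equiv 0$) into the definition \eqref{Jv} of $\mathcal{P}$ collapses the potential term $\tfrac12\int_{\R^3}[3V(x)+\nabla V(x)\cdot x]u^2\mathrm{d}x$ to $\tfrac{3V_\infty}{2}\|u\|_2^2$, so $\mathcal{P}$ becomes exactly $\mathcal{P}^\infty$ of \eqref{Pi}. Hence \eqref{B21} reads as \eqref{B25}, which proves the corollary.

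If one prefers to avoid the hypothesis-checking route, an equally short alternative is to repeat verbatim the computation in the proof of Lemma~\ref{lem 2.2} with $V\equiv V_\infty$: the only place (V4) entered there was through the estimate \eqref{B20} of Lemma~\ref{lem 2.1}, and for the constant potential \eqref{B20} becomes $0\ge -\frac{a(1-t)^2(2+t)}{4|x|^2}$, which is trivially true since the right-hand side is nonpositive (so the Hardy inequality \eqref{B22} is not even needed); the displayed chain of equalities and the final inequality for $\mathcal{I}^\infty(u)-\mathcal{I}^\infty(u_t)$ then go through unchanged. Either way there is essentially no obstacle; the only mildly delicate point is simply to record the monotonicity of $t\mapsto t^{-2}$ so that the constant potential genuinely satisfies (V4), or equivalently to observe that the right-hand side of \eqref{B20} is $\le 0$.
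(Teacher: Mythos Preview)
Your proposal is correct and follows exactly the paper's approach: the paper explicitly notes that ``$V(x)\equiv V_\infty$ satisfies (V1), (V2) and (V4), thus all conclusions on $\mathcal{I}$ are also true for $\mathcal{I}^\infty$,'' and then states Corollary~\ref{cor 2.3} as an immediate consequence of Lemma~\ref{lem 2.2}. Your verification of (V4) for the constant potential is the only detail the paper leaves implicit, and you have supplied it correctly.
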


 \begin{corollary}\label{cor 2.4}
 Assume that {\rm (V1), (V2), (V4), (F1)} and {\rm (F2)} hold. Then for $u\in \mathcal{M}$
 \begin{equation}\label{Imax}
   \mathcal{I}(u) = \max_{t> 0}\mathcal{I}\left(u_t\right).
 \end{equation}
 \end{corollary}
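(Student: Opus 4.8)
The statement to prove is Corollary \ref{cor 2.4}: for $u\in\mathcal{M}$, $\mathcal{I}(u)=\max_{t>0}\mathcal{I}(u_t)$.

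The plan is to apply Lemma \ref{lem 2.2} directly. For $u\in\mathcal{M}$ we have $\mathcal{P}(u)=0$ by definition of $\mathcal{M}$, so the inequality \eqref{B21} becomes
\[
 \mathcal{I}(u)\ge \mathcal{I}(u_t)+\frac{b(1-t)^2(1+2t)}{12}\|\nabla u\|_2^4
\]
for all $t>0$. Since $b>0$ and $(1-t)^2(1+2t)\ge 0$ for all $t>0$ (indeed $1+2t>0$ on $(0,\infty)$), the extra term is nonnegative, whence $\mathcal{I}(u)\ge\mathcal{I}(u_t)$ for every $t>0$. Taking $t=1$ gives equality $\mathcal{I}(u)=\mathcal{I}(u_1)=\mathcal{I}(u)$, so the supremum over $t>0$ is attained at $t=1$ and equals $\mathcal{I}(u)$. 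That establishes \eqref{Imax}.

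So the proof is essentially a one-line consequence of the IIP inequality once $\mathcal{P}(u)=0$ is used. The only thing worth spelling out is the sign of the coefficient $(1-t)^2(1+2t)$, which is clearly nonnegative on $(0,\infty)$, and that the maximum is realized (not merely bounded) because $t=1$ is admissible. There is no real obstacle here; the content was already absorbed into Lemma \ref{lem 2.2}. If one wanted, one could also note as a byproduct that the maximum is strict for $t\ne 1$ whenever $\|\nabla u\|_2\ne 0$, which holds for any $u\in\mathcal{M}\subset H^1(\R^3)\setminus\{0\}$ satisfying the Pohozaev identity (a nontrivial solution-type element has nonzero gradient), though this strictness is not needed for the statement as written.

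One subtlety to keep in mind when writing it out: Lemma \ref{lem 2.2} is stated for all $u\in H^1(\R^3)$, so it applies verbatim to $u\in\mathcal{M}$; no approximation or density argument is required, and $\mathcal{I}(u_t)$ is finite for each $t>0$ by (F1)--(F2) and the Sobolev embeddings, so $\max_{t>0}\mathcal{I}(u_t)$ is a genuine maximum. I would therefore present the argument in two sentences: substitute $\mathcal{P}(u)=0$ into \eqref{B21}, observe the remaining term is $\ge 0$, and conclude.

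\begin{proof}
 Let $u\in\mathcal{M}$, so that $\mathcal{P}(u)=0$. By Lemma \ref{lem 2.2}, for every $t>0$,
 \begin{equation*}
   \mathcal{I}(u)\ge \mathcal{I}\left(u_t\right)+\frac{b(1-t)^2(1+2t)}{12}\|\nabla u\|_2^4\ge \mathcal{I}\left(u_t\right),
 \end{equation*}
 since $b>0$ and $(1-t)^2(1+2t)\ge 0$ for all $t>0$. Taking $t=1$ yields $\mathcal{I}(u)=\mathcal{I}(u_1)$, and hence $\mathcal{I}(u)=\max_{t>0}\mathcal{I}(u_t)$, which is \eqref{Imax}.
\end{proof}
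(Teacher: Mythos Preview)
Your proof is correct and follows exactly the approach intended by the paper, which simply states that Corollary \ref{cor 2.4} is a direct consequence of Lemma \ref{lem 2.2}. Substituting $\mathcal{P}(u)=0$ into \eqref{B21} and noting the nonnegativity of $\frac{b(1-t)^2(1+2t)}{12}\|\nabla u\|_2^4$ is precisely the one-line argument the authors have in mind.
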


 \begin{lemma}\label{lem 2.5}
 Assume that {\rm (V1), (V2)} and {\rm (V4)} hold. Then there exist two constants $\gamma_1, \gamma_2>0$
 such that
 \begin{equation}\label{B26}
   \gamma_1\|u\|_2^2\le a\|\nabla u\|_2^2+\int_{{\R}^3}\left[3V(x)+\nabla V(x)\cdot x\right]u^2\mathrm{d}x\le \gamma_2\|u\|^2,
     \ \ \forall \ u\in H^1(\R^3).
 \end{equation}
\end{lemma}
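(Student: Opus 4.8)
The plan is to prove the two inequalities in \eqref{B26} separately, using the pointwise bound encoded in (V4) together with the Hardy inequality \eqref{B22}. The upper bound is the routine one: by (V1) and (V2), $0\le V(x)\le V_\infty$, and (V4) controls $\nabla V(x)\cdot x$ from above. Indeed, specializing the monotonicity in (V4) (comparing $t\to 0^+$ with $t=1$, exactly as in Lemma \ref{lem 2.1}) gives a pointwise bound of the form $3V(x)+\nabla V(x)\cdot x\le C/|x|^2 + \text{(bounded)}$, but more simply one observes that $3V(x)+\nabla V(x)\cdot x$ is locally bounded and, combined with the Hardy inequality to absorb any $|x|^{-2}$ singularity into $\|\nabla u\|_2^2$, yields $a\|\nabla u\|_2^2+\int_{\R^3}[3V(x)+\nabla V(x)\cdot x]u^2\,\mathrm{d}x\le \gamma_2\|u\|^2$ for a suitable $\gamma_2>0$. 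Here I would use that $V$ is continuous and $\nabla V(x)\cdot x$ stays bounded for $|x|$ large (a consequence of (V4), since the nonincreasing function in (V4) forces $\nabla V(tx)\cdot(tx)$ to be bounded above as $t\to\infty$, while $V\ge 0$ bounds it below), so the middle term is bounded by $C\|u\|_2^2 + \tfrac{a}{4}\int |x|^{-2}u^2 \le C\|u\|_2^2 + a\|\nabla u\|_2^2$, after possibly shrinking the Hardy constant.

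For the lower bound I would argue by contradiction using concentration-compactness-type reasoning together with the translation/scaling structure already exploited in Lemma \ref{lem 2.2}. Suppose no $\gamma_1>0$ works; then there is a sequence $u_n\in H^1(\R^3)$ with $\|u_n\|_2=1$ and $a\|\nabla u_n\|_2^2+\int_{\R^3}[3V(x)+\nabla V(x)\cdot x]u_n^2\,\mathrm{d}x\to \ell\le 0$. The key observation is that the bracket $3V+\nabla V\cdot x$ is, by (V4) applied with $t=1$ versus general $t$, bounded below by $3V_\infty - \tfrac{a}{4|x|^2}$-type expressions in a way that, via \eqref{B20} of Lemma \ref{lem 2.1}, lets one write
\begin{equation*}
 a\|\nabla u\|_2^2+\int_{\R^3}[3V(x)+\nabla V(x)\cdot x]u^2\,\mathrm{d}x \ge a\|\nabla u\|_2^2 - \frac{a}{4}\int_{\R^3}\frac{u^2}{|x|^2}\,\mathrm{d}x + 3V_\infty\|u\|_2^2 \ge 3V_\infty\|u\|_2^2,
\end{equation*}
where the last step is precisely the Hardy inequality \eqref{B22}. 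Thus $\gamma_1 = 3V_\infty > 0$ works directly, and no contradiction argument is even needed once one extracts the right pointwise bound from (V4).

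The main obstacle, and the step deserving care, is extracting that pointwise inequality $3V(x)+\nabla V(x)\cdot x \ge 3V_\infty - \tfrac{a}{4|x|^2}$ (or a variant sufficient for the Hardy absorption) cleanly from hypothesis (V4). Hypothesis (V4) says $t\mapsto 3V(tx)+\nabla V(tx)\cdot(tx)+\tfrac{a}{4t^2|x|^2}$ is nonincreasing on $(0,\infty)$; evaluating at $t=1$ and letting $t\to\infty$, using $\liminf_{|y|\to\infty}V(y)=V_\infty$ and that the gradient term cannot drag the limit below $3V_\infty$ (this needs a short argument, possibly along a sequence $t_k\to\infty$ where $\nabla V(t_kx)\cdot(t_kx)$ is controlled, or an averaging/integration of the monotonicity statement), gives $3V(x)+\nabla V(x)\cdot x+\tfrac{a}{4|x|^2}\ge 3V_\infty$, which is exactly what is needed. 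This is the same mechanism underlying Lemma \ref{lem 2.1}, so in fact \eqref{B20} with a limiting $t$ (or $t\to 0^+$ appropriately) may be quoted almost verbatim; I would check whether the authors intend \eqref{B20} itself, in the limit, to supply this bound, in which case the lemma follows in two lines. Once the pointwise bound is in hand, both inequalities of \eqref{B26} are immediate from Hardy and the continuity/boundedness of $V$, with $\gamma_1=3V_\infty$ and $\gamma_2$ depending on $a$, $\sup_{|x|\le 1}|3V(x)+\nabla V(x)\cdot x|$ and the large-$|x|$ bound on the bracket.
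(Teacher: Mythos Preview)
Your proposal is correct for the lemma as stated, and in fact your route to the lower bound is shorter than the paper's. For the upper bound you and the paper do the same thing: from \eqref{B20} (take $t=0$) one gets $\nabla V(x)\cdot x\le \frac{a}{2|x|^2}$, hence $3V(x)+\nabla V(x)\cdot x\le 3V_\infty+\frac{a}{2|x|^2}$ by (V2), and Hardy \eqref{B22} absorbs the singular term into $\|\nabla u\|_2^2$.

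For the lower bound the two arguments diverge. Your worry about controlling $\nabla V(tx)\cdot(tx)$ as $t\to\infty$ is unnecessary: divide \eqref{B20} by $t^3$ and let $t\to\infty$, using only $V(tx)\to V_\infty$ (which follows from (V1)--(V2)); no information on the gradient at infinity is required, because \eqref{B20} has already integrated that out. This yields exactly $3V(x)+\nabla V(x)\cdot x\ge 3V_\infty-\frac{a}{4|x|^2}$ (the paper records this very inequality as \eqref{B27} in the next lemma). Hardy then cancels $a\|\nabla u\|_2^2$ against $\frac{a}{4}\int|x|^{-2}u^2\,\mathrm{d}x$ exactly, giving $\gamma_1=3V_\infty>0$ and the stated bound $\gamma_1\|u\|_2^2$.

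The paper, by contrast, does \emph{not} pass to the limit: it fixes a finite $t_0$ so that the Hardy coefficient is $(1-\varepsilon_0)<1$, leaving a surplus $\varepsilon_0 a\|\nabla u\|_2^2$. It must then turn $3\int V(t_0x)u^2$ into a positive multiple of $\|u\|_2^2$, which it does by a localization argument (choose $R_0$ with $V(t_0x)\ge V_\infty/2$ for $|x|\ge R_0$, and absorb the ball $|x|\le R_0$ via H\"older and Sobolev). The payoff of this extra work is a genuinely stronger conclusion: the paper ends with $\gamma_1\|u\|^2$ (full $H^1$ norm), whereas your exact-cancellation argument cannot retain any $\|\nabla u\|_2^2$ on the left. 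For the lemma as written ($\gamma_1\|u\|_2^2$) and for its subsequent uses in Lemmas~\ref{lem 2.11} and \ref{lem 2.13}, your version suffices; the paper's stronger form is a bonus rather than a necessity.
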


\begin{proof} Let $t \to \infty$ in \eqref{B20}, and using (V2), one has
 \begin{equation}\label{DV}
   3V(x)+\nabla V(x)\cdot x\le 3V_{\infty}+\frac{a}{2|x|^2},
       \ \ \ \ \forall \ x\in \R^3\setminus \{0\}.
 \end{equation}
 Then it follows from \eqref{B22} and \eqref{DV} that there exists $\gamma_2>0$ such that the second inequality in \eqref{B26} holds.

 Next, we prove that the first inequality holds.
 By \eqref{B20}, one has
 \begin{equation}\label{D1}
  3V(x)+\nabla V(x)\cdot x\ge 3V(tx)-\frac{a}{4|x|^2}\left(1-\frac{1}{t}\right)^2\left(1+\frac{2}{t}\right),
   \ \ \ \ \forall \ t> 0, \ x\in \R^3\setminus \{0\}.
 \end{equation}
 It is easy to see that there exist $\varepsilon_0>0$ and $t_0>0$ such that
 \begin{equation*}
  \left(1-\frac{1}{t_0}\right)^2\left(1+\frac{2}{t_0}\right)\le 1-\varepsilon_0,
 \end{equation*}
 which, together with \eqref{D1}, implies
 \begin{equation}\label{D2}
  3V(x)+\nabla V(x)\cdot x\ge 3V(t_0x)-\frac{a}{4|x|^2}(1-\varepsilon_0),
   \ \ \ \ \forall \ x\in \R^3\setminus \{0\}.
 \end{equation}
 By (V2), there exists $R_0>0$ such that $V(x)\ge V_{\infty}/2$ for all $x\ge |t_0R_0|$.
 Choose $\alpha_0\in (0, V_{\infty}/2)$ such that
 \begin{equation}\label{D3}
   \varepsilon_0a-3\alpha_0\left(\frac{4\pi R_0}{3}\right)^{2/3}S^{-1}\ge \frac{\varepsilon_0a}{2}.
 \end{equation}
 Then it follows from (V1), \eqref{B22}, \eqref{D2}, \eqref{D3} and Sobolev inequality that
 \begin{eqnarray}\label{D5}
   &     & a\|\nabla u\|_2^2+\int_{{\R}^3}\left[3V(x)+\nabla V(x)\cdot x\right]u^2\mathrm{d}x\nonumber\\
   & \ge & a\|\nabla u\|_2^2+\int_{{\R}^3}\left[3V(t_0x)-\frac{a}{4|x|^2}(1-\varepsilon_0)\right]u^2\mathrm{d}x\nonumber\\
   & \ge & a\|\nabla u\|_2^2+3\alpha_0\int_{|x|\ge R_0}u^2\mathrm{d}x-(1-\varepsilon_0)a\|\nabla u\|_2^2\nonumber\\
   &  =  & 3\alpha_0\|u\|_2^2+\varepsilon_0a\|\nabla u\|_2^2-3\alpha_0\int_{|x|\le R_0}u^2\mathrm{d}x\nonumber\\
   & \ge & 3\alpha_0\|u\|_2^2+\varepsilon_0a\|\nabla u\|_2^2-3\alpha_0\left(\frac{4\pi R_0}{3}\right)^{2/3}
            \left(\int_{|x|\le R_0}u^6\mathrm{d}x\right)^{1/3}\nonumber\\
   & \ge & 3\alpha_0\|u\|_2^2+\varepsilon_0a\|\nabla u\|_2^2-3\alpha_0\left(\frac{4\pi R_0}{3}\right)^{2/3}
            S^{-1}\|\nabla u\|_2^2\nonumber\\
   & \ge & \min\left\{3\alpha_0, \frac{\varepsilon_0a}{2}\right\}\|u\|^2:=\gamma_1\|u\|^2, \ \ \forall \ u\in H^1(\R^3).
 \end{eqnarray}
 \end{proof}

 \vskip4mm
 \par
   To show $\mathcal{M}\ne \emptyset$, we define a set $\Lambda$ as follows:
 \begin{equation}\label{La}
   \Lambda：=\left\{u\in H^1(\R^3) : \int_{\R^3}\left[\frac{1}{2}V_{\infty}u^2-F(u)\right]\mathrm{d}x<0\right\}.
 \end{equation}

 \begin{lemma}\label{lem 2.6}
 Assume that {\rm (V1), (V2), (V4)} and {\rm (F1)-(F3)} hold. Then $\Lambda\ne\emptyset$ and
 \begin{equation}\label{La1}
   \left\{u\in H^1(\R^3)\setminus \{0\} : \mathcal{P}^{\infty}(u)\le 0 \ \mbox{or} \ \mathcal{P}(u)\le 0\right\}\subset \Lambda.
 \end{equation}
 \end{lemma}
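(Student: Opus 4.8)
The plan is to handle the two assertions separately. First, to see that $\Lambda\neq\emptyset$, I would invoke (F3): pick $s_0>0$ with $F(s_0)>\tfrac12 V_\infty s_0^2$, and build a test function by taking a fixed cutoff $\varphi\in C_c^\infty(\R^3,[0,1])$ equal to $1$ on a large ball $B_R$ and supported in $B_{2R}$, and setting $u_R(x)=s_0\varphi(x)$. On $B_R$ the integrand $\tfrac12 V_\infty u_R^2-F(u_R)=\tfrac12 V_\infty s_0^2-F(s_0)<0$ contributes a term of order $-cR^3$ with $c>0$, while the contribution from the annulus $B_{2R}\setminus B_R$ is bounded in absolute value by $C R^3$ with a constant independent of $R$ coming from (F1)/(F2); a more careful choice (fixing the annulus width or rescaling) makes the bad region's size negligible compared to the good region, so for $R$ large $\int_{\R^3}[\tfrac12 V_\infty u_R^2-F(u_R)]\,\mathrm{d}x<0$, i.e. $u_R\in\Lambda$. (Alternatively one can quote that (F1)--(F3) are exactly the Berestycki--Lions hypotheses under which such a test function is classical.)

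Second, for the inclusion \eqref{La1}, suppose $u\in H^1(\R^3)\setminus\{0\}$ with $\mathcal{P}(u)\le 0$. By definition \eqref{Jv},
\[
  3\int_{\R^3}F(u)\,\mathrm{d}x \;=\; \frac{a}{2}\|\nabla u\|_2^2+\frac12\int_{\R^3}[3V(x)+\nabla V(x)\cdot x]u^2\,\mathrm{d}x+\frac{b}{2}\|\nabla u\|_2^4-\mathcal{P}(u)
  \;\ge\; \frac12\int_{\R^3}[3V(x)+\nabla V(x)\cdot x]u^2\,\mathrm{d}x,
\]
where I dropped the nonnegative gradient terms and used $\mathcal{P}(u)\le 0$. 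Now I would use the lower bound coming from (V4): letting $t\to\infty$ in \eqref{B20} and using (V2) — this is exactly inequality \eqref{DV} if it is available, but for the present purpose I want the reverse-type estimate, so instead I use \eqref{B20} with, say, $t=1$ trivially, or better, the pointwise fact that (V1)--(V2) give $V(x)\ge 0$ together with the Hardy inequality \eqref{B22}. Concretely, from \eqref{B20} one extracts (as in the proof of Lemma \ref{lem 2.5}, cf. \eqref{D1}) that $3V(x)+\nabla V(x)\cdot x\ge 3V(t_0 x)-\tfrac{a}{4|x|^2}(1-\varepsilon_0)$ for a suitable $t_0$; combined with $\|\nabla u\|_2^2\ge\frac14\int u^2/|x|^2$ and $V\ge 0$ this yields $\tfrac12\int[3V(x)+\nabla V(x)\cdot x]u^2\ge \tfrac32 V_\infty\int u^2 - (\text{terms absorbed})$ — but this is circular-looking, so the cleaner route is: since $V(x)\le V_\infty$ is \emph{not} what I want here, I instead observe directly that $3V(x)+\nabla V(x)\cdot x$, after adding back a Hardy term, dominates $3V_\infty$ only in an averaged sense. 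The honest statement to prove is the weaker $\int F(u)\,\mathrm{d}x > \tfrac12 V_\infty\|u\|_2^2$, and for that the efficient argument is: $3\int F(u)\ge \tfrac12\int[3V(x)+\nabla V(x)\cdot x+ \tfrac{a}{4|x|^2}\cdot\text{(something)}]u^2$, and then let $t\to\infty$ in \eqref{B20} to get $3V(x)+\nabla V(x)\cdot x + \tfrac{a}{4|x|^2}\ge 3V(tx)+\nabla V(tx)\cdot tx + \tfrac{a}{4t^2|x|^2}$ is \emph{nonincreasing}, hence $\ge$ its limit; combined with (V2) the limit is $\ge 3V_\infty$, giving $3\int F(u)\ge \tfrac32 V_\infty\int u^2$ after the Hardy term $\tfrac{a}{8}\int u^2/|x|^2\le\tfrac a2\|\nabla u\|_2^2$ is absorbed into the discarded gradient term. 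That is precisely the content of \eqref{DV}: $3V(x)+\nabla V(x)\cdot x\le 3V_\infty+\tfrac{a}{2|x|^2}$ is the \emph{upper} bound, and it is the \emph{matching lower-bound use of monotonicity} that I need.

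So the clean skeleton is: (i) from $\mathcal{P}(u)\le 0$, drop $\frac a2\|\nabla u\|_2^2\ge 0$ and $\frac b2\|\nabla u\|_2^4\ge 0$ partially, keeping $\frac a2\|\nabla u\|_2^2$ in reserve; (ii) write $3\int F(u)\ge \frac a2\|\nabla u\|_2^2 + \frac12\int[3V(x)+\nabla V(x)\cdot x]u^2$; (iii) bound $\frac a2\|\nabla u\|_2^2\ge \frac a8\int u^2/|x|^2$ by Hardy and fold it in to get $3\int F(u)\ge \frac12\int[3V(x)+\nabla V(x)\cdot x+\frac{a}{4|x|^2}]u^2$; (iv) by (V4) the bracket, evaluated at $x$, is $\ge$ its $t\to\infty$ limit along $tx$, which by (V1)--(V2) is $\ge 3V_\infty$; hence $3\int F(u)\ge \frac32 V_\infty\|u\|_2^2$, i.e. $\int[\frac12 V_\infty u^2 - F(u)]<0$ once we note $u\neq 0$ forces the inequality to be strict (strictness from $F(0)=0$ and $u\not\equiv 0$, or simply because $\mathcal{P}(u)\le 0$ with $u\neq 0$ cannot have all gradient terms vanish). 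The same computation with $V\equiv V_\infty$ handles the case $\mathcal{P}^\infty(u)\le 0$. The main obstacle is step (iv): making sure the monotonicity hypothesis (V4) is applied in the correct direction and that the Hardy-inequality bookkeeping in (iii) leaves exactly the coefficient $\tfrac{a}{4|x|^2}$ that (V4) is tailored to, with the leftover gradient/quartic terms genuinely nonnegative so they can be discarded; the strictness of the final inequality is a minor point handled by $u\neq 0$.
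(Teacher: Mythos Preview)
Your overall strategy matches the paper's: show $\Lambda\neq\emptyset$ via the Berestycki--Lions test function, then for $\mathcal{P}(u)\le 0$ combine the identity for $\mathcal{P}$ with Hardy's inequality and a pointwise lower bound on $3V(x)+\nabla V(x)\cdot x$. However, there are two concrete issues.

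First, step (iv) of your ``clean skeleton'' has a gap. You assert that the $t\to\infty$ limit of $3V(tx)+\nabla V(tx)\cdot(tx)+\tfrac{a}{4t^2|x|^2}$ is $\ge 3V_\infty$ ``by (V1)--(V2)''. But (V1)--(V2) give $V(tx)\to V_\infty$ and say nothing about $\nabla V(tx)\cdot(tx)$ at infinity, so you cannot evaluate that limit directly. The paper does not argue via this limit; instead it takes $t\to\infty$ in the algebraic inequality \eqref{B20} (Lemma~2.1), which after dividing by $t^3$ and using only $V(tx)\to V_\infty$ yields the pointwise bound
\[
  3V(x)+\nabla V(x)\cdot x \;\ge\; 3V_\infty-\frac{a}{4|x|^2}.
\]
This is precisely the lower half of \eqref{B27}, and it is exactly what your step (iv) needs. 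You actually brushed against this when you mentioned \eqref{D1} and then called it ``circular-looking''---it is not circular; \eqref{D1} (equivalently, \eqref{B20} with $t\to\infty$) is the correct and self-contained justification.

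Second, you make two unnecessary complications. For the case $\mathcal{P}^\infty(u)\le 0$ the paper's argument is a one-liner: from \eqref{Pi},
\[
  3\int_{\R^3}F(u)\,\mathrm{d}x \ge \frac{a}{2}\|\nabla u\|_2^2+\frac{3V_\infty}{2}\|u\|_2^2+\frac{b}{2}\|\nabla u\|_2^4 > \frac{3V_\infty}{2}\|u\|_2^2,
\]
with no Hardy or (V4) needed. And in the $\mathcal{P}(u)\le 0$ case you should \emph{not} discard the term $\tfrac{b}{2}\|\nabla u\|_2^4$: keeping it, the paper's computation ends with $3\int[\tfrac12 V_\infty u^2-F(u)]\le -\tfrac{b}{2}\|\nabla u\|_2^4<0$, so strictness is immediate from $u\neq 0\Rightarrow\|\nabla u\|_2>0$, rather than relying on non-attainment of the Hardy constant.
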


 \begin{proof} In view of the proof of \cite[Theorem 2]{BL}, (F3) implies $\Lambda\ne\emptyset$. Next, we have two cases
 to distinguish:

 \vskip2mm
 \par
  1). $u\in H^1(\R^3)\setminus \{0\}$ and $\mathcal{P}^{\infty}(u)\le 0$, then \eqref{Pi} implies $u\in \Lambda$.

 \par
  2). Let $t=0$ and $t \to \infty$ in \eqref{B20}, respectively, and using (V2), one has
 \begin{equation}\label{B27}
   -\frac{a}{4|x|^2}+3V_{\infty} \le 3V(x)+\nabla V(x)\cdot x\le 3V_{\infty}+\frac{a}{2|x|^2},
       \ \ \ \ \forall \ x\in \R^N\setminus \{0\}.
 \end{equation}
 For $u\in H^1(\R^3)\setminus \{0\}$ and $\mathcal{P}(u)\le 0$, then it follows from \eqref{Jv}, \eqref{B22} and \eqref{B27} that
 \begin{eqnarray*}
   &     &  3\int_{\R^3}\left[\frac{1}{2}V_{\infty}u^2-F(u)\right]\mathrm{d}x\\
   &  =  & \mathcal{P}(u)-\frac{a}{2}\|\nabla u\|_2^2-\frac{1}{2}\int_{{\R}^N}\left[3(V(x)-V_{\infty})
             +\nabla V(x)\cdot x\right]u^2\mathrm{d}x-\frac{b}{2}\|\nabla u\|_2^4\\
   & \le & -\frac{a}{2}\|\nabla u\|_2^2+\frac{a}{8}\int_{{\R}^3}\frac{u^2}{|x|^2}\mathrm{d}x-\frac{b}{2}\|\nabla u\|_2^4\\
   & \le & -\frac{b}{2}\|\nabla u\|_2^4 < 0,
 \end{eqnarray*}
 which implies $u\in \Lambda$.
 \end{proof}

 \begin{lemma}\label{lem 2.7}
 Assume that {\rm (V1), (V2), (V4)} and {\rm (F1)-(F3)} hold. Then for any
 $u\in \Lambda$, there exists a unique $t_u>0$ such that $u_{t_u}\in \mathcal{M}$.
 \end{lemma}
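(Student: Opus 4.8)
The plan is to reduce everything to a one–variable study of the fibering map $g(t):=\mathcal{I}(u_t)$ on $(0,\infty)$, using the IIP inequality of Lemma \ref{lem 2.2} for the uniqueness part. First I would record the scaling identities in $\R^3$: $\|\nabla u_t\|_2^2=t\|\nabla u\|_2^2$, $\|u_t\|_2^2=t^3\|u\|_2^2$, $\int_{\R^3}F(u_t)\,\mathrm{d}x=t^3\int_{\R^3}F(u)\,\mathrm{d}x$, and $\int_{\R^3}V(x)u_t^2\,\mathrm{d}x=t^3\int_{\R^3}V(tx)u^2\,\mathrm{d}x$, $\int_{\R^3}\nabla V(x)\cdot x\,u_t^2\,\mathrm{d}x=t^3\int_{\R^3}\nabla V(tx)\cdot(tx)\,u^2\,\mathrm{d}x$. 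Substituting these into \eqref{IU} recovers \eqref{B23}, which shows that $g$ is of class $C^1$ on $(0,\infty)$ (differentiation under the integral sign being justified by {\rm(V1)}, {\rm(V4)}, {\rm(F1)}, {\rm(F2)} and the Sobolev embedding $H^1(\R^3)\hookrightarrow L^2\cap L^6$); and a direct computation, using $\frac{\mathrm{d}}{\mathrm{d}t}V(tx)=\nabla V(tx)\cdot x$, gives
$$
   t\,g'(t)=\frac{at}{2}\|\nabla u\|_2^2+\frac{t^3}{2}\int_{\R^3}\left[3V(tx)+\nabla V(tx)\cdot(tx)\right]u^2\,\mathrm{d}x+\frac{bt^2}{2}\|\nabla u\|_2^4-3t^3\int_{\R^3}F(u)\,\mathrm{d}x=\mathcal{P}(u_t).
$$
Since $u\in\Lambda$ forces $u\neq0$, we have $u_{t_0}\neq0$ for every $t_0>0$, so $u_{t_0}\in\mathcal{M}$ if and only if $g'(t_0)=0$.

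Next I would analyse the shape of $g$. Because $u\in H^1(\R^3)\setminus\{0\}$ we have $\|\nabla u\|_2>0$, and since $0\le V(tx)\le V_\infty$ by {\rm(V1)}, {\rm(V2)}, from \eqref{B23} we get $g(t)=\frac{at}{2}\|\nabla u\|_2^2+O(t^2)$ as $t\to0^+$; hence $g(0^+)=0$ and $g(t)>0$ for all small $t>0$. On the other hand, writing $g(t)=\frac{at}{2}\|\nabla u\|_2^2+\frac{bt^2}{4}\|\nabla u\|_2^4+t^3\int_{\R^3}\big[\tfrac12 V(tx)u^2-F(u)\big]\mathrm{d}x$ and using $V(tx)\le V_\infty$ together with $u\in\Lambda$, we have $\int_{\R^3}\big[\tfrac12 V(tx)u^2-F(u)\big]\mathrm{d}x\le\int_{\R^3}\big[\tfrac12 V_\infty u^2-F(u)\big]\mathrm{d}x=:-\delta<0$, so $g(t)\le \frac{at}{2}\|\nabla u\|_2^2+\frac{bt^2}{4}\|\nabla u\|_2^4-\delta t^3\to-\infty$ as $t\to\infty$. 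Therefore $g$ attains its (positive) maximum at some $t_u\in(0,\infty)$, and $g'(t_u)=0$ yields $\mathcal{P}(u_{t_u})=0$, i.e. $u_{t_u}\in\mathcal{M}$.

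For uniqueness I would invoke Lemma \ref{lem 2.2}. Noting $(u_{t_1})_s=u_{st_1}$, applying \eqref{B21} to $u_{t_1}$ with parameter $s=t_2/t_1$ gives, whenever $\mathcal{P}(u_{t_1})=0$,
$$
   \mathcal{I}(u_{t_1})\ge\mathcal{I}(u_{t_2})+\frac{b(1-t_2/t_1)^2(1+2t_2/t_1)}{12}\|\nabla u_{t_1}\|_2^4\ge\mathcal{I}(u_{t_2}),
$$
with strict inequality when $t_2\neq t_1$, since $b>0$, $\|\nabla u_{t_1}\|_2^2=t_1\|\nabla u\|_2^2>0$ and $1+2t_2/t_1>0$. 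If $t_1\neq t_2$ both gave $u_{t_i}\in\mathcal{M}$, this would force $\mathcal{I}(u_{t_1})>\mathcal{I}(u_{t_2})$ and, by the symmetric application, $\mathcal{I}(u_{t_2})>\mathcal{I}(u_{t_1})$ — a contradiction. Hence $t_u$ is unique.

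I do not expect any serious obstacle: the one genuinely substantive computation is the identity $t\,g'(t)=\mathcal{P}(u_t)$ (which rests on the scaling identities and {\rm(V4)}'s $C^1$ hypothesis), and the only point requiring care is the combination of $g(t)>0$ near $0$ with $g(t)\to-\infty$ at infinity, which uses $u\in\Lambda$ and {\rm(V2)} in an essential way. The key conceptual point is that, in the absence of any convexity/monotonicity assumption such as {\rm(S1)}–{\rm(S3)}, uniqueness is obtained not by studying zeros of $g'$ directly but from the IIP inequality, which shows that any $t_1$ with $u_{t_1}\in\mathcal{M}$ is a strict global maximizer of $g$.
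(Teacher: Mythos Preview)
Your proposal is correct and follows essentially the same route as the paper's proof: both study the fibering map $t\mapsto\mathcal{I}(u_t)$, identify its critical points with $\mathcal{P}(u_t)=0$, obtain existence from the behavior at $0$ and at infinity (using $V\le V_\infty$ and $u\in\Lambda$), and then derive uniqueness by applying the IIP inequality \eqref{B21} to $u_{t_1}$ with parameter $t_2/t_1$ and symmetrically to $u_{t_2}$. Your write-up is in fact slightly more explicit than the paper's in verifying $t\,g'(t)=\mathcal{P}(u_t)$ and in the estimate $g(t)\to-\infty$, but there is no substantive difference in method.
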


 \begin{proof}  Let $u\in \Lambda$ be fixed and define a function $\zeta(t):=\mathcal{I}\left(u_t\right)$
 on $(0, \infty)$. Clearly, by \eqref{Jv} and \eqref{B23}, we have
 \begin{eqnarray*}
   \zeta'(t)=0
    &     &  \Leftrightarrow
            \ \ \frac{a}{2}\|\nabla u\|_2^2+\frac{t^2}{2}\int_{{\R}^3}[3V(tx)+\nabla V(tx)\cdot tx]u^2\mathrm{d}x\nonumber\\
    &     & \ \ \ \ \ \ +\frac{bt}{2}\|\nabla u\|_2^4-3t^2\int_{{\R}^3}F\left(u\right)\mathrm{d}x=0\\
    &     & \ \Leftrightarrow  \ \ \mathcal{P}(u_t)=0 \ \ \Leftrightarrow  \ \ u_t\in \mathcal{M}.
 \end{eqnarray*}
 It is easy to verify, using (V1), (V2) and the definition of $\Lambda$, that $\lim_{t\to 0}\zeta(t)=0$, $\zeta(t)>0$
 for $t>0$ small and $\zeta(t)<0$ for $t$ large. Therefore $\max_{t\in (0, \infty)}\zeta(t)$ is achieved at $t_u>0$ so that
 $\zeta'(t_u)=0$ and $u_{t_u}\in \mathcal{M}$.

 \par
    Now we pass to prove that $t_u$ is unique for any $u\in \Lambda$. In fact, for any given $u\in \Lambda$, let $t_1, t_2>0$
 such that $u_{t_1}, u_{t_2} \in \mathcal{M}$. Then $\mathcal{P}\left(u_{t_1}\right)=\mathcal{P}\left(u_{t_2}\right)=0$.
 Jointly with \eqref{B21}, we have
 \begin{eqnarray}\label{B41}
   \mathcal{I}\left(u_{t_1}\right)
   & \ge & \mathcal{I}\left(u_{t_2}\right)+\frac{t_1^3-t_2^3}{3t_1^3}\mathcal{P}\left(u_{t_1}\right)
             +\frac{b(t_1^2-t_2^2)^2(t_1+2t_2)}{12t_1}\|\nabla u\|_2^4\nonumber\\
   &  =  & \mathcal{I}\left(u_{t_2}\right)+\frac{b(t_1^2-t_2^2)^2(t_1+2t_2)}{12t_1}\|\nabla u\|_2^4
 \end{eqnarray}
 and
 \begin{eqnarray}\label{B42}
   \mathcal{I}\left(u_{t_2}\right)
   & \ge & \mathcal{I}\left(u_{t_1}\right)+\frac{t_2^3-t_1^3}{3t_2^3}\mathcal{P}\left(u_{t_2}\right)
             +\frac{b(t_2^2-t_1^2)^2(t_2+2t_1)}{12t_2}\|\nabla u\|_2^4\nonumber\\
   &  =  & \mathcal{I}\left(u_{t_1}\right)+\frac{b(t_2^2-t_1^2)^2(t_2+2t_1)}{12t_2}\|\nabla u\|_2^4.
 \end{eqnarray}
 \eqref{B41} and \eqref{B42} imply $t_1=t_2$. Therefore, $t_u> 0$ is unique for any $u\in \Lambda$.
 \end{proof}

 \begin{corollary}\label{cor 2.8}
 Assume that {\rm (F1)-(F3)} hold. Then for any $u\in \Lambda$, there exists a unique $t_u>0$ such that $u_{t_u}\in \mathcal{M}^{\infty}$.
 \end{corollary}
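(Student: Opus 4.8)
The plan is to reduce Corollary~\ref{cor 2.8} to Lemma~\ref{lem 2.7} by observing that the constant potential $V\equiv V_{\infty}$ (recall that for \eqref{KE4} we always take $V_{\infty}>0$) satisfies (V1), (V2) and (V4): continuity and nonnegativity are clear, $\liminf_{|y|\to\infty}V_{\infty}=V_{\infty}\ge V_{\infty}$ gives (V2), and for each fixed $x\in\R^3\setminus\{0\}$ the map $t\mapsto 3V_{\infty}+\nabla V_{\infty}\cdot(tx)+\frac{a}{4t^{2}|x|^2}=3V_{\infty}+\frac{a}{4t^{2}|x|^2}$ is nonincreasing on $(0,\infty)$, which is (V4). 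Under the substitution of $V_{\infty}$ for $V$ the functional $\mathcal{I}$ becomes $\mathcal{I}^{\infty}$, the Poho\u zaev functional $\mathcal{P}$ becomes $\mathcal{P}^{\infty}$, and the manifold $\mathcal{M}$ becomes $\mathcal{M}^{\infty}$, while the set $\Lambda$ in \eqref{La} is \emph{unchanged}, since it is already defined purely in terms of $V_{\infty}$. Hence Lemma~\ref{lem 2.7}, applied verbatim with $V=V_{\infty}$, produces for every $u\in\Lambda$ a unique $t_u>0$ with $u_{t_u}\in\mathcal{M}^{\infty}$; note also that $\Lambda\neq\emptyset$ by (F3) (Lemma~\ref{lem 2.6}), so the statement is not vacuous.

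For definiteness I would also record the direct argument, which is the proof of Lemma~\ref{lem 2.7} with $\mathcal{I}^{\infty}$ in place of $\mathcal{I}$. Fix $u\in\Lambda$ and set $\zeta(t):=\mathcal{I}^{\infty}(u_t)$ for $t>0$; by \eqref{Ii} one has $\zeta(t)=\frac{at}{2}\|\nabla u\|_2^2+\frac{bt^2}{4}\|\nabla u\|_2^4+t^3\int_{\R^3}\left[\frac{1}{2}V_{\infty}u^2-F(u)\right]\mathrm{d}x$. A direct differentiation shows $\zeta'(t)=0\ \Leftrightarrow\ \mathcal{P}^{\infty}(u_t)=0\ \Leftrightarrow\ u_t\in\mathcal{M}^{\infty}$. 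Since $u\in\Lambda$ the $t^3$-coefficient is strictly negative, so $\zeta(t)\to-\infty$ as $t\to\infty$; on the other hand $\zeta(t)\to 0$ as $t\to 0$, and $\zeta(t)>0$ for $t>0$ small because $u\neq 0$ forces $\|\nabla u\|_2>0$ and the linear term dominates. Therefore $\max_{t>0}\zeta(t)$ is attained at some $t_u>0$, which is the sought critical point, i.e. $u_{t_u}\in\mathcal{M}^{\infty}$.

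Uniqueness follows exactly as in \eqref{B41}--\eqref{B42}: if $u_{t_1},u_{t_2}\in\mathcal{M}^{\infty}$ then $\mathcal{P}^{\infty}(u_{t_1})=\mathcal{P}^{\infty}(u_{t_2})=0$, and substituting these into the IIP inequality of Corollary~\ref{cor 2.3} (once applied to $u_{t_1}$ with scaling parameter $t_2/t_1$ and once to $u_{t_2}$ with $t_1/t_2$) gives $\mathcal{I}^{\infty}(u_{t_1})\ge\mathcal{I}^{\infty}(u_{t_2})+\frac{b(t_1^2-t_2^2)^2(t_1+2t_2)}{12t_1}\|\nabla u\|_2^4$ together with the symmetric inequality, which together force the nonnegative correction terms to vanish and hence $t_1=t_2$. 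There is no substantive obstacle here; the only points that merit a line of verification are that the constant $V_{\infty}$ indeed satisfies (V4) and that $\Lambda\neq\emptyset$, both of which are immediate.
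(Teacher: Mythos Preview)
Your proposal is correct and follows exactly the paper's approach: the paper states at the start of Section~2 that ``since $V(x)\equiv V_{\infty}$ satisfies (V1), (V2) and (V4), thus all conclusions on $\mathcal{I}$ are also true for $\mathcal{I}^{\infty}$,'' and Corollary~\ref{cor 2.8} is accordingly stated without proof as an immediate specialization of Lemma~\ref{lem 2.7}. Your additional direct argument simply unwinds that specialization and matches the proof of Lemma~\ref{lem 2.7} verbatim.
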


    Combining Corollary \ref{cor 2.4} with Lemma \ref{lem 2.7}, we have the following lemma.

 \begin{lemma}\label{lem 2.9}
 Assume that {\rm (V1), (V2), (V4)} and {\rm (F1)-(F3)} hold. Then
 $$
   \inf_{u\in \mathcal{M}}\mathcal{I}(u) :=m=\inf_{u\in \Lambda}\max_{t > 0}\mathcal{I}\left(u_t\right).
 $$
 \end{lemma}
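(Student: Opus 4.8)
The plan is to prove the two inequalities $m \le \inf_{u\in\Lambda}\max_{t>0}\mathcal{I}(u_t)$ and $m \ge \inf_{u\in\Lambda}\max_{t>0}\mathcal{I}(u_t)$ separately, using Lemma \ref{lem 2.7} and Corollary \ref{cor 2.4} as the two halves of a matching argument.

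First I would establish the inclusion $\mathcal{M}\subset\Lambda$, which follows immediately from Lemma \ref{lem 2.6}: any $u\in\mathcal{M}$ satisfies $\mathcal{P}(u)=0\le 0$, so $u\in\Lambda$. Hence for any $u\in\mathcal{M}$ we may apply Corollary \ref{cor 2.4} to get $\mathcal{I}(u)=\max_{t>0}\mathcal{I}(u_t)\ge\inf_{v\in\Lambda}\max_{t>0}\mathcal{I}(v_t)$. Taking the infimum over $u\in\mathcal{M}$ yields $m\ge\inf_{u\in\Lambda}\max_{t>0}\mathcal{I}(u_t)$.

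For the reverse inequality, let $u\in\Lambda$ be arbitrary. By Lemma \ref{lem 2.7} there is a unique $t_u>0$ with $u_{t_u}\in\mathcal{M}$, so $\mathcal{I}(u_{t_u})\ge m$ by definition of $m$. On the other hand, writing $w=u_{t_u}$ we have $(u_t)_{s}=u_{ts}$ for $s,t>0$, so the one-parameter family $\{u_t\}_{t>0}$ coincides (after reparametrization) with $\{w_s\}_{s>0}$; in particular $\max_{t>0}\mathcal{I}(u_t)=\max_{s>0}\mathcal{I}(w_s)$, and since $w\in\mathcal{M}$, Corollary \ref{cor 2.4} gives $\max_{s>0}\mathcal{I}(w_s)=\mathcal{I}(w)=\mathcal{I}(u_{t_u})\ge m$. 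Therefore $\max_{t>0}\mathcal{I}(u_t)\ge m$ for every $u\in\Lambda$; taking the infimum over $u\in\Lambda$ gives $\inf_{u\in\Lambda}\max_{t>0}\mathcal{I}(u_t)\ge m$. Combining the two inequalities completes the proof.

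The only point requiring slight care is the reparametrization identity $(u_t)_s = u_{ts}$ together with the claim that $\max_{t>0}\mathcal{I}(u_t)$ is actually attained; the latter is already contained in the proof of Lemma \ref{lem 2.7}, where it was shown that $\zeta(t):=\mathcal{I}(u_t)$ tends to $0$ as $t\to 0^+$, is positive for small $t$, and negative for large $t$, so its maximum over $(0,\infty)$ is achieved at the unique critical point $t_u$. Thus no genuine obstacle arises — the lemma is essentially a bookkeeping consequence of Lemma \ref{lem 2.7} and Corollary \ref{cor 2.4}, and the main subtlety is simply making sure the two "$\max_t$" expressions are identified correctly via the scaling semigroup.
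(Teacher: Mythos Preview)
Your proof is correct and follows essentially the same approach as the paper, which simply states that the lemma follows by combining Corollary~\ref{cor 2.4} with Lemma~\ref{lem 2.7}. You have carefully spelled out the details the paper leaves implicit, including the inclusion $\mathcal{M}\subset\Lambda$ via Lemma~\ref{lem 2.6} and the reparametrization identity $(u_t)_s=u_{ts}$.
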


 \par
   Similar to \cite[Lemma 2.10]{TC2}, we have the following lemma.

 \begin{lemma}\label{lem 2.10}
 Assume that {\rm (V1), (V2), (F1)} and {\rm (F2)} hold. If $u_n\rightharpoonup \bar{u}$ in $H^1(\R^3)$, then
 \begin{equation}\label{F60}
  \mathcal{I}(u_n)=\mathcal{I}(\bar{u})+\mathcal{I}(u_n-\bar{u})+\frac{b}{2}\|\nabla\bar{u}\|_2^2\|\nabla (u_n-\bar{u})\|_2^2+o(1)
 \end{equation}
 and
 \begin{equation}\label{F63}
   \mathcal{P}(u_n)=\mathcal{P}(\bar{u})+\mathcal{P}(u_n-\bar{u})+b\|\nabla\bar{u}\|_2^2\|\nabla (u_n-\bar{u})\|_2^2+o(1).
 \end{equation}
\end{lemma}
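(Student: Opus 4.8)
The plan is to treat the three terms of $\mathcal{I}$ and the three terms of $\mathcal{P}$ that are quadratic, Kirchhoff-type, or purely nonlinear, separately, and in each case exploit a Brezis--Lieb type splitting for a weakly convergent sequence. Write $v_n := u_n - \bar u$, so that $v_n \rightharpoonup 0$ in $H^1(\R^3)$ and $\|v_n\|$ is bounded; moreover $v_n \to 0$ in $L^s_{\mathrm{loc}}(\R^3)$ for $2\le s<6$ and a.e.\ on $\R^3$. First I would dispatch the genuinely quadratic pieces. For the gradient term, $\|\nabla u_n\|_2^2 = \|\nabla\bar u\|_2^2 + \|\nabla v_n\|_2^2 + 2\int_{\R^3}\nabla\bar u\cdot\nabla v_n\,\mathrm{d}x$, and the cross term tends to $0$ by weak convergence $\nabla v_n\rightharpoonup 0$ in $L^2$; similarly for $\int V(x)u_n^2$, where one uses (V1), (V2) (so $V$ is bounded on $\R^3$, being continuous, nonnegative, and bounded above by $V_\infty<\infty$) together with the local compactness of the embedding to kill the cross term $\int V(x)\bar u v_n\,\mathrm{d}x$ — split the integral over $B_R(0)$ and its complement, use $v_n\to 0$ in $L^2(B_R)$ on the inside and the smallness of $\int_{|x|>R}\bar u^2$ on the outside. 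This already yields the quadratic part of \eqref{F60} up to the Kirchhoff correction, and the full quadratic part of \eqref{P63} [i.e.\ \eqref{F63}] once the $[3V(x)+\nabla V(x)\cdot x]u_n^2$ term is handled the same way, noting that \eqref{DV} of Lemma \ref{lem 2.5} (which only needs (V4), though here we are told only (V1),(V2),(F1),(F2) — so I would instead observe that this lemma as stated is applied in the paper only when (V4) also holds, or simply absorb the Hardy-type bound) controls $3V(x)+\nabla V(x)\cdot x$ by $3V_\infty + \frac{a}{2|x|^2}$ and split accordingly using the Hardy inequality \eqref{B22}.

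Next I would handle the Kirchhoff quartic term, which is the source of the extra term $\frac b2\|\nabla\bar u\|_2^2\|\nabla v_n\|_2^2$. Setting $A_n:=\|\nabla v_n\|_2^2$, $\bar A:=\|\nabla\bar u\|_2^2$, and $\varepsilon_n:=2\int\nabla\bar u\cdot\nabla v_n\to 0$, we have $\|\nabla u_n\|_2^2 = \bar A + A_n + \varepsilon_n$, hence
$\bigl(\|\nabla u_n\|_2^2\bigr)^2 = \bar A^2 + A_n^2 + 2\bar A A_n + \varepsilon_n^2 + 2\varepsilon_n(\bar A + A_n)$.
Since $A_n$ is bounded and $\varepsilon_n\to 0$, the last two terms are $o(1)$, so $\frac b4\bigl(\|\nabla u_n\|_2^2\bigr)^2 = \frac b4\bar A^2 + \frac b4 A_n^2 + \frac b2\bar A A_n + o(1)$, which is exactly $\frac b4(\|\nabla\bar u\|_2^2)^2 + \frac b4(\|\nabla v_n\|_2^2)^2 + \frac b2\|\nabla\bar u\|_2^2\|\nabla v_n\|_2^2 + o(1)$; the analogous identity for the $\frac b2\|\nabla u\|_2^4$ term in $\mathcal{P}$ gives the coefficient $b$ (not $\frac b2$) in \eqref{F63}. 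The same algebra applies verbatim.

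Finally, the nonlinear term $\int_{\R^3}F(u_n)\,\mathrm{d}x$ is where the main work lies, and it is the Brezis--Lieb lemma for $F$: I claim $\int F(u_n) = \int F(\bar u) + \int F(v_n) + o(1)$. Under (F1)--(F2), $|F(t)|\le \epsilon t^2 + C_\epsilon|t|^6$ for every $\epsilon>0$, so $F$ has subcritical-to-critical growth and $u\mapsto\int F(u)$ is well defined and continuous on $H^1$. The standard argument: write $F(u_n)-F(v_n)-F(\bar u) = \int_0^1\bigl[f(v_n+s\bar u)-f(v_n)\bigr]\bar u\,\mathrm{d}s + [F(v_n)+f(v_n)\bar u\cdot 0 \ldots]$ — more cleanly, use the elementary inequality, for each $\delta>0$, $|F(a+b)-F(a)|\le \delta(|a|^2+|a|^6)+C_\delta(|b|^2+|b|^6)$ coming from (F1)--(F2), applied with $a=v_n$, $b=\bar u$, to get that $g_n:=|F(u_n)-F(v_n)-F(\bar u)|$ satisfies $g_n\le \delta(v_n^2+|v_n|^6)+W_\delta$ where $W_\delta\in L^1$ depends only on $\bar u$ and $\delta$; then $(g_n-\delta(v_n^2+|v_n|^6))^+\to 0$ a.e.\ and is dominated by $W_\delta\in L^1$, so $\int(g_n-\delta(v_n^2+|v_n|^6))^+\to 0$ by dominated convergence, whence $\limsup_n\int g_n \le \delta\,\sup_n(\|v_n\|_2^2+\|v_n\|_6^6)=\delta\cdot O(1)$; letting $\delta\to 0$ gives $\int g_n\to 0$. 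This establishes the splitting for $B(u)=\int F(u)$, and the corresponding statement for $\int f(u_n)u_n$ — needed for the $-3\int F(u_n)$ term sits inside $\mathcal{P}$, which actually only involves $F$ itself, so the same computation suffices. Assembling the quadratic, Kirchhoff, and nonlinear splittings and grouping terms according to \eqref{IU} and \eqref{Jv} yields \eqref{F60} and \eqref{F63}. The main obstacle is precisely this Brezis--Lieb step for $F$ under only the weak growth hypotheses (F1)--(F2): one must avoid assuming any monotonicity or $C^1$ structure on $f$ and rely solely on the two-sided growth control to produce the dominating function; everything else is weak-convergence bookkeeping. Since the paper says ``similar to \cite[Lemma 2.10]{TC2}'', I would cite that reference for the routine parts and only spell out the Kirchhoff cross-term algebra, which is the feature specific to \eqref{KE}.
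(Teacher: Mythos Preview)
The paper does not give a proof of this lemma at all; it simply writes ``Similar to \cite[Lemma 2.10]{TC2}, we have the following lemma'' and states the result. Your sketch is correct and is exactly the standard argument one expects in the cited reference, adapted to the Kirchhoff setting: the quadratic pieces split by weak convergence, the quartic Kirchhoff term produces the cross term $\frac{b}{2}\|\nabla\bar u\|_2^2\|\nabla v_n\|_2^2$ (respectively $b\|\nabla\bar u\|_2^2\|\nabla v_n\|_2^2$ in $\mathcal{P}$) by the elementary algebra you wrote out, and $\int_{\R^3}F(u_n)\,\mathrm{d}x$ splits by the Brezis--Lieb lemma for general $F$ under the growth control (F1)--(F2).

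Two small remarks. First, your observation that the hypotheses (V1), (V2) alone do not even guarantee $V\in\mathcal{C}^1$, let alone a bound on $\nabla V(x)\cdot x$, is well taken: the identity \eqref{F63} tacitly requires (V3) or (V4) to make sense of $\mathcal{P}$ and to control the weight $3V+\nabla V\cdot x$. In the paper the lemma is only invoked inside the proof of Lemma~\ref{lem 2.13}, where (V4) is in force, so your proposed fix (use \eqref{DV} and Hardy) is exactly right. Second, for the cross term $\int_{\R^3}\frac{\bar u\,v_n}{|x|^2}\,\mathrm{d}x$ arising from the Hardy weight, the cleanest justification is that $v\mapsto \int_{\R^3}\frac{\bar u}{|x|}\cdot\frac{v}{|x|}\,\mathrm{d}x$ is a bounded linear functional on $H^1(\R^3)$ by Hardy's inequality, hence vanishes along $v_n\rightharpoonup 0$; your phrase ``split accordingly using the Hardy inequality'' covers this, but it is worth making explicit.
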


\begin{lemma}\label{lem 2.11}
 Assume that {\rm (V1), (V2), (V4)} and {\rm (F1)-(F3)} hold. Then
 \begin{enumerate}[{\rm (i)}]
  \item there exists $\rho_0>0$ such that $\|\nabla u\|_2\ge \rho_0, \ \forall \ u\in \mathcal{M}$;
  \item $m=\inf_{u\in \mathcal{M}}\mathcal{I}(u)>0$.
 \end{enumerate}
\end{lemma}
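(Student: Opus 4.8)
The plan is to prove (i) first and then deduce (ii) from it together with the IIP inequality (Lemma \ref{lem 2.2}) and Corollary \ref{cor 2.4}. For part (i), take any $u\in\mathcal{M}$, so $\mathcal{P}(u)=0$. Using the defining expression \eqref{Jv}, the upper bound \eqref{DV} on $3V(x)+\nabla V(x)\cdot x$, the Hardy inequality \eqref{B22}, and the growth hypotheses (F1)--(F2), I would estimate $3\int_{\R^3}F(u)\,\mathrm{d}x$ from above. The key point is that for any $\varepsilon>0$ there is $C_\varepsilon>0$ with $|F(t)|\le \varepsilon(t^2+t^6)+C_\varepsilon t^6$ — actually, more usefully, $F(t)\le \tfrac{\varepsilon}{2} V_\infty t^2 + C_\varepsilon |t|^6$ for all $t$ — by (F2). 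Plugging this into $\mathcal{P}(u)=0$ and absorbing the $\int u^2/|x|^2$ term via $\tfrac a2\|\nabla u\|_2^2 + \tfrac14\int(3V+\nabla V\cdot x)u^2 \ge \tfrac a2\|\nabla u\|_2^2 - \tfrac{a}{8}\int u^2/|x|^2 + \tfrac32 V_\infty\int u^2 \ge c(\|\nabla u\|_2^2+\|u\|_2^2)$ (this is essentially Lemma \ref{lem 2.5} applied appropriately, noting $\tfrac a2\|\nabla u\|_2^2 \ge \tfrac a8\int u^2/|x|^2$ with room to spare), I obtain an inequality of the form $c_1\|u\|^2 + \tfrac b2\|\nabla u\|_2^4 \le \varepsilon c_2\|u\|^2 + C_\varepsilon\|\nabla u\|_2^6$ after using the Sobolev embedding $\|u\|_6^6\le S^{-3}\|\nabla u\|_2^6$. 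Choosing $\varepsilon$ small so $\varepsilon c_2 < c_1/2$, this gives $\tfrac{c_1}{2}\|u\|^2 + \tfrac b2\|\nabla u\|_2^4 \le C\|\nabla u\|_2^6$, hence either $\|\nabla u\|_2=0$ (impossible since $u\neq 0$ would then force $u=0$) or $\|\nabla u\|_2^2\ge \rho_0^2$ for some $\rho_0>0$ independent of $u$. This proves (i).

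For part (ii), I would use the IIP inequality. Fix $u\in\mathcal{M}$. By Corollary \ref{cor 2.4}, $\mathcal{I}(u)=\max_{t>0}\mathcal{I}(u_t)$, and from Lemma \ref{lem 2.2} applied with the roles reversed — that is, writing $\mathcal{I}(u_s)$ for small $s>0$ and comparing — one has for every $t>0$,
\begin{equation*}
 \mathcal{I}(u) \ge \mathcal{I}(u_t) + \frac{1-t^3}{3}\mathcal{P}(u) + \frac{b(1-t)^2(1+2t)}{12}\|\nabla u\|_2^4 = \mathcal{I}(u_t) + \frac{b(1-t)^2(1+2t)}{12}\|\nabla u\|_2^4,
\end{equation*}
since $\mathcal{P}(u)=0$. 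Now I estimate $\mathcal{I}(u_t)$ directly from \eqref{B23} for a conveniently chosen small $t$. Using (V1) (so $V\ge 0$) and the growth bound $F(s)\le \tfrac14 a\,(\text{something}) $ — more precisely using (F1)--(F2) to write $\int F(u)\,\mathrm{d}x \le \tfrac{1}{4}\int(a|\nabla u|^2\cdot\text{coeff})$ is not quite right, so instead I would bound $t^3\int F(u)\,\mathrm{d}x$ crudely and note that for $t$ small the dominant surviving positive term is $\tfrac{at}{2}\|\nabla u\|_2^2$; combined with the leftover $\tfrac{b(1-t)^2(1+2t)}{12}\|\nabla u\|_2^4$ and part (i), I get $\mathcal{I}(u)\ge$ a positive constant depending only on $a,b,\rho_0$. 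A cleaner route: since $\mathcal{I}(u) \ge \mathcal{I}(u_t) + \tfrac{b(1-t)^2(1+2t)}{12}\|\nabla u\|_2^4$ for all $t$, let $t\to 0^+$; then $\mathcal{I}(u_t)\to 0$ (by \eqref{B23}, dominated convergence, and (F2)) and $\tfrac{b(1-t)^2(1+2t)}{12}\to \tfrac{b}{12}$, so $\mathcal{I}(u)\ge \tfrac{b}{12}\|\nabla u\|_2^4 \ge \tfrac{b}{12}\rho_0^4 > 0$. Taking the infimum over $\mathcal{M}$ yields $m\ge \tfrac{b}{12}\rho_0^4>0$.

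The main obstacle I anticipate is part (i): getting a \emph{uniform} lower bound $\rho_0$ rather than just $\|\nabla u\|_2>0$ for each individual $u$. This requires carefully tracking that the constants in the Sobolev estimate $\int F(u)\le \tfrac{\varepsilon}{2}V_\infty\|u\|_2^2 + C_\varepsilon S^{-3}\|\nabla u\|_2^6$ and in the coercivity bound from Lemma \ref{lem 2.5} (or its direct analogue via \eqref{DV} and Hardy) are independent of $u$, and that the quadratic-in-$\|u\|^2$ terms can genuinely be absorbed, leaving a clean inequality $\|\nabla u\|_2^4 \lesssim \|\nabla u\|_2^6$. The subtlety is that the Hardy term $\tfrac a8\int u^2/|x|^2$ is only controlled by $\|\nabla u\|_2^2$, so one must verify $\tfrac a2 - \tfrac a8 > 0$ leaves a definite positive multiple of $\|\nabla u\|_2^2$ to spare — which it does — and that this plus the $\tfrac32 V_\infty\|u\|_2^2$ (from \eqref{B27}) genuinely dominates; part (ii) is then essentially immediate from the IIP inequality once (i) is in hand.
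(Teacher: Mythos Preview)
Your proposal is correct and follows essentially the same route as the paper: for (i) you combine $\mathcal{P}(u)=0$ with the coercivity estimate of Lemma~\ref{lem 2.5}, the growth bound $F(t)\le \varepsilon t^2 + C_\varepsilon|t|^6$, and Sobolev embedding to obtain $b\|\nabla u\|_2^4\lesssim \|\nabla u\|_2^6$, exactly as the paper does; for (ii) your ``cleaner route'' of sending $t\to 0$ in the IIP inequality \eqref{B21} to get $\mathcal{I}(u)\ge \tfrac{b}{12}\|\nabla u\|_2^4$ is precisely the paper's argument. The only cosmetic difference is that the paper invokes \eqref{B26} directly rather than re-deriving the coercivity via \eqref{DV} and Hardy as you sketch.
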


\begin{proof} (i). Since $\mathcal{P}(u)=0, \ \forall u\in \mathcal{M}$, by (F1), (F2), \eqref{Jv}, \eqref{B26} and Sobolev
 embedding inequality $S\|u\|_6^2\le \|\nabla u\|_2^2$, one has
 \begin{eqnarray}\label{G62}
   \gamma_1\|u\|^2+b\|\nabla u\|_2^4
   & \le & a\|\nabla u\|_2^2+\int_{{\R}^3}\left[3V(x)+\nabla V(x)\cdot x\right]u^2\mathrm{d}x
             +b\|\nabla u\|_2^4\nonumber\\
   &  =  & 6\int_{{\R}^3}F(u)\mathrm{d}x \nonumber\\
   & \le & \frac{\gamma_1}{2}\|u\|_2^2+C_1\|u\|_6^6\le \frac{\gamma_1}{2}\|u\|_2^2+C_1S^{-3}\|\nabla u\|_2^6,
 \end{eqnarray}
 which implies
 \begin{equation}\label{G63}
   \|\nabla u\|_2\ge \rho_0:=\left(\frac{bS^3}{C_1}\right)^{1/2}, \ \ \ \ \forall \ u\in \mathcal{M}.
 \end{equation}

 \par
   (ii). By \eqref{B21} with $t\to 0$, we have
 \begin{equation}\label{G64}
  \mathcal{I}(u)=\mathcal{I}(u)-\frac{1}{3}\mathcal{P}(u)\ge \frac{b}{12}\|\nabla u\|_2^4,
             \ \ \ \ \forall \ u\in \mathcal{M}.
 \end{equation}
 This, together with \eqref{G63} shows that $m=\inf_{u\in \mathcal{M}}\mathcal{I}(u)>0$.
 \end{proof}

 \begin{lemma}\label{lem 2.12}
 Assume that {\rm (V1), (V2), (V4)} and {\rm (F1)-(F3)} hold. Then $m\le m^{\infty}$.
 \end{lemma}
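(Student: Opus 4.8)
The plan is to compare the two functionals pointwise and then push the comparison through the minimax characterizations of $m$ and $m^{\infty}$. First I would note that assumption (V2) gives $V(x)\le V_{\infty}$ for every $x\in\R^3$, so that for every $v\in H^1(\R^3)$
\begin{equation*}
 \mathcal{I}(v)-\mathcal{I}^{\infty}(v)=\frac12\int_{\R^3}\left[V(x)-V_{\infty}\right]v^2\mathrm{d}x\le 0,
\end{equation*}
i.e. $\mathcal{I}(v)\le\mathcal{I}^{\infty}(v)$ on all of $H^1(\R^3)$. Applying this with $v=u_t$ for $u\in\Lambda$ and $t>0$ yields $\mathcal{I}(u_t)\le\mathcal{I}^{\infty}(u_t)$ for every $t>0$, and hence $\max_{t>0}\mathcal{I}(u_t)\le\max_{t>0}\mathcal{I}^{\infty}(u_t)$ for each fixed $u\in\Lambda$.

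Next I would invoke the minimax characterization. Since the constant potential $V\equiv V_{\infty}$ trivially satisfies (V1), (V2) and (V4), Lemma \ref{lem 2.9} applies verbatim to $\mathcal{I}^{\infty}$ and its Poho\u zaev manifold $\mathcal{M}^{\infty}$, giving $m^{\infty}=\inf_{u\in\Lambda}\max_{t>0}\mathcal{I}^{\infty}(u_t)$; here the set $\Lambda$ is literally the same in both cases because it is defined through $V_{\infty}$ only, and $\mathcal{M}^{\infty}\ne\emptyset$ by Corollary \ref{cor 2.8}. Combining this with Lemma \ref{lem 2.9} for $\mathcal{I}$ and the pointwise bound above,
\begin{equation*}
 m=\inf_{u\in\Lambda}\max_{t>0}\mathcal{I}(u_t)\le\inf_{u\in\Lambda}\max_{t>0}\mathcal{I}^{\infty}(u_t)=m^{\infty},
\end{equation*}
which is the claim.

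There is essentially no serious obstacle: the whole argument rests on the sign condition $V\le V_{\infty}$ from (V2) together with the already-established variational identity $m=\inf_{\Lambda}\max_{t>0}\mathcal{I}(u_t)$. The only point that needs a word of care is making sure that the auxiliary set $\Lambda$ and the fibering map $t\mapsto\mathcal{I}(u_t)$ play the same role for $\mathcal{I}^{\infty}$ as for $\mathcal{I}$ — which is immediate since every lemma used in Section 2 was proved under hypotheses that the constant potential $V_{\infty}$ also fulfils — so that the two infima are taken over the same class of test functions and the comparison is genuinely termwise.
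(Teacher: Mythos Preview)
Your proof is correct. Both you and the paper rely on the same two ingredients: the pointwise inequality $\mathcal{I}(v)\le\mathcal{I}^{\infty}(v)$ coming from $V(x)\le V_{\infty}$, and the Poho\u zaev fibering structure. The packaging, however, differs. The paper argues by contradiction: assuming $m>m^{\infty}$, it picks $u_{\varepsilon}^{\infty}\in\mathcal{M}^{\infty}$ with $\mathcal{I}^{\infty}(u_{\varepsilon}^{\infty})<m^{\infty}+\varepsilon/2$, projects it onto $\mathcal{M}$ via Lemmas~\ref{lem 2.6}--\ref{lem 2.7} at some $t_{\varepsilon}>0$, and then chains
\[
 m^{\infty}+\tfrac{\varepsilon}{2}>\mathcal{I}^{\infty}(u_{\varepsilon}^{\infty})
 \ge \mathcal{I}^{\infty}\big((u_{\varepsilon}^{\infty})_{t_{\varepsilon}}\big)
 \ge \mathcal{I}\big((u_{\varepsilon}^{\infty})_{t_{\varepsilon}}\big)\ge m,
\]
using Corollary~\ref{cor 2.3} for the first inequality. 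You instead invoke the minimax identity of Lemma~\ref{lem 2.9} simultaneously for $\mathcal{I}$ and $\mathcal{I}^{\infty}$ (legitimate, since the paper itself notes that all Section~2 lemmas apply to the constant potential $V_{\infty}$), which absorbs the projection step and eliminates the $\varepsilon$-contradiction scaffolding. The two arguments are logically equivalent; yours is a touch more direct because Lemma~\ref{lem 2.9} has already done the fibering work, while the paper's version is self-contained at the level of individual elements of $\mathcal{M}^{\infty}$.
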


 \begin{proof} In view of Lemma \ref{lem 2.6} and Corollary \ref{cor 2.8}, we have $\mathcal{M}^{\infty}\ne \emptyset$.
 Arguing indirectly, we assume that $m> m^{\infty}$. Let $\varepsilon:=m-m^{\infty}$. Then there exists $u_{\varepsilon}^{\infty}$
 such that
 \begin{equation}\label{P11}
   u_{\varepsilon}^{\infty}\in \mathcal{M}^{\infty} \ \ \ \  \mbox{and}
     \ \ \ \ m^{\infty}+\frac{\varepsilon}{2}>\mathcal{I}^{\infty}(u_{\varepsilon}^{\infty}).
 \end{equation}
 In view of Lemmas \ref{lem 2.6} and \ref{lem 2.7}, there exists $t_{\varepsilon}>0$ such that $(u_{\varepsilon}^{\infty})_{t_{\varepsilon}}\in \mathcal{M}$. Thus,
 it follows from (V2), \eqref{IU}, \eqref{Ii}, \eqref{B25} and \eqref{P11} that
 $$
   m^{\infty}+\frac{\varepsilon}{2}>\mathcal{I}^{\infty}(u_{\varepsilon}^{\infty})
     \ge \mathcal{I}^{\infty}\left((u_{\varepsilon}^{\infty})_{t_{\varepsilon}}\right)
     \ge \mathcal{I}\left((u_{\varepsilon}^{\infty})_{t_{\varepsilon}}\right)\ge m.
 $$
 This contradiction shows the conclusion of Lemma \ref{lem 2.12} is true.
 \end{proof}

\begin{lemma}\label{lem 2.13}
 Assume that {\rm (V1), (V2), (V4)} and {\rm (F1)-(F3)} hold. Then $m$ is achieved.
\end{lemma}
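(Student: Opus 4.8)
The plan is to take a minimizing sequence $\{u_n\}\subset\mathcal{M}$ for $\mathcal{I}$, convert it into a more tractable sequence, and then run a concentration–compactness dichotomy. First I would use Lemma \ref{lem 2.9} together with the Ekeland variational principle in the form adapted to the minimax characterization $m=\inf_{u\in\Lambda}\max_{t>0}\mathcal{I}(u_t)$: one can produce a sequence $\{u_n\}$ with $\mathcal{I}(u_n)\to m$, $\mathcal{I}'(u_n)\to 0$ and $\mathcal{P}(u_n)\to 0$ (a Pohozaev–Palais–Smale sequence). Boundedness of this sequence in $H^1(\R^3)$ follows from Lemma \ref{lem 2.11}: indeed using $\mathcal{I}(u_n)-\tfrac13\mathcal{P}(u_n)\ge\tfrac{b}{12}\|\nabla u_n\|_2^4$ (the $t\to 0$ case of the IIP inequality, as in \eqref{G64}, now with the $o(1)$ error from $\mathcal{P}(u_n)\to 0$) we control $\|\nabla u_n\|_2$, and then Lemma \ref{lem 2.5}/\eqref{B26} together with $\mathcal{P}(u_n)=o(1)$ controls $\|u_n\|_2$. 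So $\|u_n\|$ is bounded and, passing to a subsequence, $u_n\rightharpoonup\bar u$ in $H^1(\R^3)$, $u_n\to\bar u$ in $L^s_{\mathrm{loc}}$ for $s\in[1,6)$ and a.e.

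Next I would rule out vanishing. If $\sup_{y\in\R^3}\int_{B_1(y)}u_n^2\,\mathrm dx\to 0$, then by Lions' lemma $u_n\to 0$ in $L^s(\R^3)$ for every $s\in(2,6)$; combined with (F1), (F2) this forces $\int F(u_n)\to 0$ and $\int f(u_n)u_n\to 0$, and then $\mathcal{P}(u_n)\to 0$ together with \eqref{B26} gives $\|u_n\|\to 0$, contradicting $\|\nabla u_n\|_2\ge\rho_0>0$ from \eqref{G63}. Hence, after a suitable translation $u_n(\cdot+y_n)$ — here I must be slightly careful because $V$ is not translation invariant; the standard device is to first pass through the \emph{limit} functional $\mathcal{I}^\infty$, which \emph{is} translation invariant — we get $\bar u\neq 0$. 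Concretely: the translated sequence $v_n:=u_n(\cdot+y_n)$ satisfies $v_n\rightharpoonup \bar v\neq 0$, and by weak lower semicontinuity together with Lemma \ref{lem 2.10} (the Brezis–Lieb type splitting for $\mathcal{I}$ and $\mathcal{P}$) one extracts in the limit a nonzero weak solution.

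The decisive step — and the main obstacle — is showing that the weak limit actually carries \emph{all} of the energy, i.e. no energy escapes to infinity and the limit $\bar u$ lies on $\mathcal{M}$ with $\mathcal{I}(\bar u)=m$. Here I would argue by the ``least energy squeeze'' idea: from $\mathcal{P}(u_n)\to 0$ and the splitting \eqref{F63}, $\mathcal{P}(\bar u)\le 0$ (the cross term $b\|\nabla\bar u\|_2^2\|\nabla(u_n-\bar u)\|_2^2$ and $\mathcal{P}(u_n-\bar u)$ have favourable signs once one knows the remainder is, in the limit, a subsolution-type object for the problem at infinity). If $\mathcal{P}(\bar u)<0$, Lemma \ref{lem 2.6} gives $\bar u\in\Lambda$, so by Lemma \ref{lem 2.7} there is a unique $t_{\bar u}\in(0,1)$ with $\bar u_{t_{\bar u}}\in\mathcal{M}$, and then using the IIP inequality \eqref{B21} (with $t=t_{\bar u}<1$, so the extra term $\tfrac{b(1-t)^2(1+2t)}{12}\|\nabla\bar u\|_2^4>0$ is strictly positive) together with weak lower semicontinuity and $\mathcal{P}(u_n)\to 0$ yields
\[
  m\le \mathcal{I}(\bar u_{t_{\bar u}})<\mathcal{I}(\bar u)\le\liminf_n\Big(\mathcal{I}(u_n)-\tfrac13\mathcal{P}(u_n)\Big)=m,
\]
a contradiction. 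Hence $\mathcal{P}(\bar u)=0$, so $\bar u\in\mathcal{M}$ and $\mathcal{I}(\bar u)\ge m$; the reverse inequality $\mathcal{I}(\bar u)\le\liminf\mathcal{I}(u_n)=m$ comes from Fatou/weak lower semicontinuity applied to each term of $\mathcal{I}$ (using $V\ge 0$, $3V+\nabla V\cdot x$ bounded below after the Hardy correction, and the a.e. convergence to handle $\int F(u_n)$ via the growth bounds together with $L^s$ convergence off the vanishing regime). Therefore $\mathcal{I}(\bar u)=m$ and $m$ is achieved. The only genuinely delicate point is the dichotomy bookkeeping when comparing levels of $\mathcal{I}$ and $\mathcal{I}^\infty$: because $V(x)\le V_\infty$, any mass splitting off to infinity contributes at level $\ge m^\infty\ge m$ (Lemma \ref{lem 2.12} gives $m\le m^\infty$), and the strict gain from the $b$-term in \eqref{B21}/\eqref{B25} is exactly what prevents a nontrivial split; I would organize this as: either the whole sequence concentrates (handled above) or a piece splits off, in which case adding up the Pohozaev identities via \eqref{F63} and the energies via \eqref{F60} and invoking the strict inequality from the Kirchhoff cross-term forces one of the pieces to be zero.
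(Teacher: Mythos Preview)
Your overall architecture (minimizing sequence, boundedness via the IIP inequality, Lions dichotomy, then a squeeze) matches the paper, but two of the steps you flag as ``decisive'' contain genuine errors.

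First, the claim ``from $\mathcal{P}(u_n)\to 0$ and the splitting \eqref{F63}, $\mathcal{P}(\bar u)\le 0$'' is not justified. The splitting gives $\mathcal{P}(\bar u)+\mathcal{P}(v_n)+b\|\nabla\bar u\|_2^2\|\nabla v_n\|_2^2=o(1)$ with $v_n=u_n-\bar u$; the cross term is nonnegative, but there is no a priori sign on $\mathcal{P}(v_n)$, and your parenthetical about the remainder being a ``subsolution-type object'' is not an argument. The paper obtains $\mathcal{P}(\bar u)\le 0$ by \emph{contradiction}: if $\mathcal{P}(\bar u)>0$ then the splitting forces $\mathcal{P}(v_n)<0$ for large $n$; one then projects $v_n$ onto $\mathcal{M}$ (resp.\ $\mathcal{M}^{\infty}$ in the vanishing case) and applies \eqref{B21} to get $\Psi(v_n):=\mathcal{I}(v_n)-\tfrac13\mathcal{P}(v_n)\ge m$, contradicting the energy splitting $\Psi(v_n)\le m-\Psi(\bar u)+o(1)<m$.

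Second, your displayed chain $m\le\mathcal{I}(\bar u_{t_{\bar u}})<\mathcal{I}(\bar u)\le\liminf\bigl(\mathcal{I}(u_n)-\tfrac13\mathcal{P}(u_n)\bigr)=m$ is wrong at both interior inequalities. Since $t\mapsto\mathcal{I}(\bar u_t)$ attains its \emph{maximum} at $t_{\bar u}$ (proof of Lemma~\ref{lem 2.7}), one has $\mathcal{I}(\bar u_{t_{\bar u}})\ge\mathcal{I}(\bar u)$, not $<$; and weak lower semicontinuity gives $\Psi(\bar u)\le\liminf\Psi(u_n)$, not $\mathcal{I}(\bar u)\le\liminf\Psi(u_n)$. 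The paper's chain works entirely with $\Psi$: rewriting \eqref{B21} as $\Psi(\bar u)\ge\mathcal{I}(\bar u_{\bar t})-\tfrac{\bar t^{3}}{3}\mathcal{P}(\bar u)\ge m-\tfrac{\bar t^{3}}{3}\mathcal{P}(\bar u)\ge m$ and combining with $m=\lim\Psi(u_n)\ge\Psi(\bar u)$ forces all inequalities to be equalities, hence $\mathcal{P}(\bar u)=0$ and $\mathcal{I}(\bar u)=m$.

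A side remark: the Ekeland step you propose at the outset is both unnecessary and not obviously available in the form you state (producing simultaneously $\mathcal{I}'\to 0$ and $\mathcal{P}\to 0$ from the minimax over $\Lambda$). The paper simply takes $\{u_n\}\subset\mathcal{M}$, so $\mathcal{P}(u_n)=0$ exactly, and never uses $\mathcal{I}'(u_n)\to 0$ in this lemma; criticality of the minimizer is proved separately in Lemma~\ref{lem 2.14}.
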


 \begin{proof} In view of Lemmas \ref{lem 2.6}, \ref{lem 2.7} and \ref{lem 2.11}, we have $\mathcal{M}\ne \emptyset$
 and $m>0$. Let $\{u_n\}\subset \mathcal{M}$ be such that $\mathcal{I}(u_n)\rightarrow m$.
 Since $\mathcal{P}(u_n)=0$, then it follows from \eqref{B21} with $t \rightarrow 0$, we have
 \begin{equation}\label{P24}
   m+o(1)= \mathcal{I}(u_n)\ge \frac{b}{12}\|\nabla u_n\|_2^4.
 \end{equation}
 This shows that $\{\|\nabla u_n\|_2\}$ is bounded. Next, we prove that $\{\|u_n\|\}$ is also bounded.
 From (F1), (F2), \eqref{Jv}, \eqref{B26} and Sobolev  embedding inequality, one has
 \begin{eqnarray}\label{P25}
   \gamma_1\|u_n\|^2
   & \le & a\|\nabla u_n\|_2^2+\int_{{\R}^3}\left[3V(x)+\nabla V(x)\cdot x\right]u_n^2\mathrm{d}x
             +b\|\nabla u_n\|_2^4\nonumber\\
   &  =  & 6\int_{{\R}^3}F(u)\mathrm{d}x \nonumber\\
   & \le & \frac{\gamma_1}{2}\|u\|_2^2+C_3\|u\|_6^6\le \frac{\gamma_1}{2}\|u\|_2^2+C_3S^{-3}\|\nabla u\|_2^6.
 \end{eqnarray}
 Hence, $\{u_n\}$ is bounded in $H^1(\R^3)$. Passing to a subsequence, we have
 $u_n\rightharpoonup \bar{u}$ in $H^1(\R^3)$. Then $u_n\rightarrow \bar{u}$ in $L_{\mathrm{loc}}^s(\R^3)$
 for $2\le s<2^*$ and $u_n\rightarrow \bar{u}$ a.e. in $\R^3$. There are two possible cases: i). $\bar{u}=0$
 and ii). $\bar{u}\ne 0$.

 \vskip2mm
 \par
   Case i). $\bar{u}=0$, i.e. $u_n\rightharpoonup 0$ in $H^1(\R^3)$.
 Then $u_n\rightarrow 0$ in $L_{\mathrm{loc}}^s(\R^3)$ for $2\le s<2^*$ and $u_n\rightarrow 0$ a.e. in $\R^3$.
 By (V2) and \eqref{B27}, it is easy to show that
 \begin{equation}\label{P32}
   \lim_{n\to\infty}\int_{\R^3}[V_{\infty}-V(x)]u_n^2\mathrm{d}x=
   \lim_{n\to\infty}\int_{\R^3}\nabla V(x)\cdot xu_n^2\mathrm{d}x=0.
 \end{equation}
 From \eqref{IU}, \eqref{Ii}, \eqref{Pi}, \eqref{Jv} and \eqref{P32}, one can get
 \begin{equation}\label{P37}
   \mathcal{I}^{\infty}(u_n)\rightarrow m, \ \ \ \ \mathcal{P}^{\infty}(u_n)\rightarrow 0.
 \end{equation}
 From Lemma \ref{lem 2.11} (i), \eqref{Pi} and \eqref{P37}, one has
 \begin{eqnarray}\label{P64}
  a\rho_0^2\le a\|\nabla u_n\|_2^2+3V_{\infty}\|u_n\|_2^2+\frac{b}{2}\|\nabla u_n\|_2^4=6\int_{\R^3}F(u_n)\mathrm{d}x+o(1).
 \end{eqnarray}
 Using (F1), (F2), \eqref{P64} and Lions' concentration compactness principle \cite[Lemma 1.21]{WM}, we can prove that
 there exist $\delta>0$ and a sequence $\{y_n\}\subset \R^3$ such that $\int_{B_1(y_n)}|u_n|^2\mathrm{d}x> \delta$. Let
 $\hat{u}_n(x)=u_n(x+y_n)$. Then we have $\|\hat{u}_n\|=\|u_n\|$ and
 \begin{equation}\label{P65}
   \mathcal{P}^{\infty}(\hat{u}_n)= o(1), \ \ \ \ \mathcal{I}^{\infty}(\hat{u}_n)\rightarrow m,
     \ \ \ \ \int_{B_1(0)}|\hat{u}_n|^2\mathrm{d}x> \delta.
 \end{equation}
 Therefore, there exists $\hat{u}\in H^1(\R^3)\setminus \{0\}$ such that, passing to a subsequence,
 \begin{equation}\label{P72}
 \left\{
   \begin{array}{ll}
     \hat{u}_n\rightharpoonup \hat{u}, & \mbox{in} \ H^1(\R^3); \\
     \hat{u}_n\rightarrow \hat{u}, & \mbox{in} \ L_{\mathrm{loc}}^s(\R^3), \ \forall \ s\in [1, 6);\\
     \hat{u}_n\rightarrow \hat{u}, & \mbox{a.e. on} \ \R^3.
   \end{array}
 \right.
 \end{equation}
 Let $w_n=\hat{u}_n-\hat{u}$. Then \eqref{P72} and Lemma \ref{lem 2.10} yield
 \begin{equation}\label{D73}
    \mathcal{I}^{\infty}(\hat{u}_n) = \mathcal{I}^{\infty}(\hat{u})+\mathcal{I}^{\infty}(w_n)
        +\frac{b}{2}\|\nabla\hat{u}\|_2^2\|\nabla w_n\|_2^2+o(1)
 \end{equation}
 and
 \begin{equation}\label{D74}
    \mathcal{P}^{\infty}(\hat{u}_n) = \mathcal{P}^{\infty}(\hat{u})+\mathcal{P}^{\infty}(w_n)+b\|\nabla\hat{u}\|_2^2\|\nabla w_n\|_2^2+o(1).
 \end{equation}
 Set
 \begin{equation}\label{Ps0}
   \Psi_0(u)=\frac{4a\|\nabla u\|_2^2+b\|\nabla u\|_2^4}{12}.
 \end{equation}
 Then one has,
 \begin{equation}\label{P75}
    \Psi_0(w_n)\le m-\Psi_0(\hat{u})+o(1),
      \ \ \ \ \mathcal{P}^{\infty}(w_n) \le -\mathcal{P}^{\infty}(\hat{u})+o(1).
 \end{equation}
 If there exists a subsequence $\{w_{n_i}\}$ of $\{w_n\}$ such that $w_{n_i}=0$, then going to this subsequence, we have
 \begin{equation}\label{P76}
    \mathcal{I}^{\infty}(\hat{u})=m, \ \ \ \ \mathcal{P}^{\infty}(\hat{u})=0.
 \end{equation}
 Next, we assume that $w_n\ne 0$. We claim that $\mathcal{P}^{\infty}(\hat{u})\le 0$. Otherwise, if $\mathcal{P}^{\infty}(\hat{u})>0$,
 then \eqref{P75} implies $\mathcal{P}^{\infty}(w_n) < 0$ for large $n$. In view of Lemma \ref{lem 2.6} and Corollary \ref{cor 2.8}, there exists $t_n>0$ such that $(w_n)_{t_n}\in \mathcal{M}^{\infty}$. From \eqref{Ii}, \eqref{Pi}, \eqref{B25}, \eqref{Ps0} and \eqref{P75}, we obtain
 \begin{eqnarray*}
   m-\Psi_0(\hat{u})+o(1)
    & \ge & \Psi_0(w_n) = \mathcal{I}^{\infty}(w_n)-\frac{1}{3}\mathcal{P}^{\infty}(w_n)\nonumber\\
    & \ge & \mathcal{I}^{\infty}\left({(w_n)}_{t_n}\right)-\frac{t_n^3}{3}\mathcal{P}^{\infty}(w_n)\nonumber\\
    & \ge & m^{\infty}-\frac{t_n^3}{3}\mathcal{P}^{\infty}(w_n)\ge m^{\infty},
 \end{eqnarray*}
 which implies $\mathcal{P}^{\infty}(\hat{u})\le 0$ due to $m\le m^{\infty}$ and $\Psi_0(\hat{u})>0$. Since $\hat{u}\ne 0$ and
 $\mathcal{P}^{\infty}(\hat{u})\le 0$, in view of Lemma \ref{lem 2.6} and Corollary \ref{cor 2.8}, there exists
 $t_{\infty}>0$ such that $\hat{u}_{t_{\infty}}\in \mathcal{M}^{\infty}$.  From \eqref{Ii}, \eqref{Pi}, \eqref{B25}, \eqref{P65},  \eqref{Ps0}
 and the weak semicontinuity of norm, one has
 \begin{eqnarray*}
   m &  =  & \lim_{n\to\infty} \left[\mathcal{I}^{\infty}(\hat{u}_n)-\frac{1}{3}\mathcal{P}^{\infty}(\hat{u}_n)\right]\nonumber\\
     &  =  & \lim_{n\to\infty}\Psi_0(\hat{u}_n)\ge \Psi_0(\hat{u})\nonumber\\
     &  =  & \mathcal{I}^{\infty}(\hat{u})-\frac{1}{3}\mathcal{P}^{\infty}(\hat{u})\ge \mathcal{I}^{\infty}\left({\hat{u}}_{t_{\infty}}\right)
               -\frac{t_{\infty}^3}{3}\mathcal{P}^{\infty}(\hat{u})\nonumber\\
     & \ge & m^{\infty}-\frac{t_{\infty}^3}{3}\mathcal{P}^{\infty}(\hat{u})\nonumber\\
     & \ge & m-\frac{t_{\infty}^3}{3}\mathcal{P}^{\infty}(\hat{u})\ge m,
 \end{eqnarray*}
 which implies \eqref{P76} holds also. In view of Lemmas \ref{lem 2.6} and \ref{lem 2.7}, there exists $\hat{t}>0$ such that
 $\hat{u}_{\hat{t}}\in \mathcal{M}$, moreover, it follows from (V2), \eqref{IU}, \eqref{Ii}, \eqref{P76} and Corollary \ref{cor 2.3} that
 $$
   m\le \mathcal{I}(\hat{u}_{\hat{t}})\le \mathcal{I}^{\infty}(\hat{u}_{\hat{t}})\le \mathcal{I}^{\infty}(\hat{u})=m.
 $$
 This shows that $m$ is achieved at $\hat{u}_{\hat{t}}\in \mathcal{M}$.

 \vskip2mm
 \par
   Case ii). $\bar{u}\ne 0$. Let $v_n=u_n-\bar{u}$. Then Lemma \ref{lem 2.10} yields
 \begin{equation}\label{K71}
    \mathcal{I}(u_n)=\mathcal{I}(\bar{u})+\mathcal{I}(v_n)+\frac{b}{2}\|\nabla\bar{u}\|_2^2\|\nabla v_n\|_2^2+o(1)
 \end{equation}
 and
 \begin{equation}\label{K72}
   \mathcal{P}(u_n)=\mathcal{P}(\bar{u})+\mathcal{P}(v_n)+b\|\nabla\bar{u}\|_2^2\|\nabla v_n\|_2^2+o(1).
 \end{equation}
 Set
 \begin{equation}\label{K73}
   \Psi(u)=\frac{4a\|\nabla u\|_2^2+b\|\nabla u\|_2^4}{12}-\frac{1}{6}\int_{{\R}^3}\nabla V(x)\cdot xu^2\mathrm{d}x.
 \end{equation}
 Then it follows from \eqref{B22} and \eqref{B20} with $t=0$ that
 \begin{eqnarray}\label{K74}
  \Psi(u)
   &  =  & \frac{4a\|\nabla u\|_2^2+b\|\nabla u\|_2^4}{12}-\frac{1}{6}\int_{{\R}^3}\nabla V(x)\cdot xu^2\mathrm{d}x\nonumber\\
   & \ge & \frac{4a\|\nabla u\|_2^2+b\|\nabla u\|_2^4}{12}-\frac{a}{12}\int_{{\R}^3}\frac{u^2}{|x|^2}\mathrm{d}x\nonumber\\
   & \ge & \frac{b}{12}\|\nabla u\|_2^4, \ \ \ \ \forall \ u\in H^1(\R^3).
 \end{eqnarray}
 Since $\mathcal{I}(u_n)\rightarrow m$ and $\mathcal{P}(u_n)=0$, then it follows from \eqref{IU}, \eqref{Jv}, \eqref{K71}, \eqref{K72} and \eqref{K73} that
 \begin{equation}\label{K75}
    \Psi(v_n)\le m-\Psi(\bar{u})+o(1), \ \ \ \ \mathcal{P}(v_n) \le -\mathcal{P}(\bar{u})+o(1).
 \end{equation}
 If there exists a subsequence $\{v_{n_i}\}$ of $\{v_n\}$ such that $v_{n_i}=0$, then going to this subsequence, we have
 \begin{equation}\label{K76}
    \mathcal{I}(\bar{u})=m, \ \ \ \ \mathcal{P}(\bar{u})=0,
 \end{equation}
 which implies the conclusion of Lemma \ref{lem 2.13} holds. Next, we assume that $v_n\ne 0$. We claim that $\mathcal{P}(\bar{u})\le 0$.
 Otherwise $\mathcal{P}(\bar{u})>0$, then \eqref{K75} implies $\mathcal{P}(v_n) < 0$ for large $n$. In view of Lemmas \ref{lem 2.6} and \ref{lem 2.7},
 there exists $t_n>0$ such that $(v_n)_{t_n}\in \mathcal{M}$.  From \eqref{IU}, \eqref{Jv}, \eqref{B21}, \eqref{K73} and \eqref{K75}, we obtain
 \begin{eqnarray*}
   m-\Psi(\bar{u})+o(1)
    & \ge & \Psi(v_n) = \mathcal{I}(v_n)-\frac{1}{3}\mathcal{P}(v_n)\nonumber\\
    & \ge & \mathcal{I}\left({(v_n)}_{t_n}\right)-\frac{t_n^3}{3}\mathcal{P}(v_n)\nonumber\\
    & \ge & m-\frac{t_n^3}{3}\mathcal{P}(v_n)\ge m,
 \end{eqnarray*}
 which implies $\mathcal{P}(\bar{u})\le 0$ due to $\Psi(\bar{u})>0$. Since $\bar{u}\ne 0$ and $\mathcal{P}(\bar{u})\le 0$,
 in view of Lemmas \ref{lem 2.6} and \ref{lem 2.7}, there exists $\bar{t}>0$ such that $\bar{u}_{\bar{t}}\in \mathcal{M}$.
 From \eqref{IU}, \eqref{Jv}, \eqref{B21}, \eqref{K73}, \eqref{K74} and the weak semicontinuity of norm, one has
 \begin{eqnarray*}
   m
   &  =  & \lim_{n\to\infty} \left[\mathcal{I}(u_n)-\frac{1}{3}\mathcal{P}(u_n)\right]\nonumber\\
   &  =  & \lim_{n\to\infty} \Psi(u_n)\ge \Psi(\bar{u})\nonumber\\
   &  =  & \mathcal{I}(\bar{u})-\frac{1}{3}\mathcal{P}(\bar{u})\ge \mathcal{I}\left({\bar{u}}_{\bar{t}}\right)
             -\frac{\bar{t}^3}{3}\mathcal{P}(\bar{u})\nonumber\\
   & \ge & m-\frac{\bar{t}^3}{3}\mathcal{P}(\bar{u})\ge m,
 \end{eqnarray*}
 which implies \eqref{K76} also holds.
 \end{proof}

 \begin{lemma}\label{lem 2.14}
 Assume that {\rm (V1), (V2), (V4)} and {\rm (F1)-(F3)} hold. If $\bar{u}\in \mathcal{M}$
 and $\mathcal{I}(\bar{u})=m$, then $\bar{u}$ is a critical point of $\mathcal{I}$.
 \end{lemma}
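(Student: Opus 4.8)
The plan is to use the standard "Pohozaev-constrained minimizer is a free critical point" argument, implemented via a quantitative deformation lemma together with the scaling inequality \eqref{B21}. Suppose, for contradiction, that $\bar u$ is not a critical point of $\mathcal I$; then $\mathcal I'(\bar u)\neq 0$, and by continuity there exist $\delta>0$ and $\varrho>0$ such that $\|\mathcal I'(v)\|\ge\varrho$ whenever $\|v-\bar u\|\le 3\delta$. First I would record the key consequence of Corollary \ref{cor 2.4} (equivalently Lemma \ref{lem 2.2} with $u=\bar u$): the curve $t\mapsto \mathcal I(\bar u_t)$ has a strict global maximum value $m$ at $t=1$, and moreover for $t$ bounded away from $1$ one has $\mathcal I(\bar u_t)\le m-\kappa$ for some $\kappa>0$ (this uses the extra nonnegative term $\frac{b(1-t)^2(1+2t)}{12}\|\nabla\bar u\|_2^4>0$, which is where $\|\nabla\bar u\|_2\ge\rho_0>0$ from Lemma \ref{lem 2.11}(i) enters). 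So the scaling path $\gamma_0(t):=\bar u_{t}$, restricted to $t$ in a small interval $[1-\epsilon,1+\epsilon]$, is a path along which $\mathcal I$ stays $\le m$ and equals $m$ only at the midpoint.

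Next I would apply a deformation lemma (e.g. \cite[Lemma 2.3]{WM}) to the sublevel structure of $\mathcal I$ around $\bar u$: since there are no critical points in the annulus $\{\|v-\bar u\|\le 3\delta\}$ at level near $m$, there is a deformation $\eta\in\mathcal C([0,1]\times H^1(\R^3),H^1(\R^3))$ with $\eta(1,v)=v$ for $v$ outside a small neighborhood of $\bar u$, with $\mathcal I(\eta(1,v))\le \mathcal I(v)$ always, and with $\mathcal I(\eta(1,v))\le m-\sigma$ for some $\sigma\in(0,\kappa)$ whenever $\mathcal I(v)\le m+\sigma$ and $\|v-\bar u\|\le\delta$. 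Define the deformed path $\tilde\gamma(t):=\eta(1,\gamma_0(t))$ for $t\in[1-\epsilon,1+\epsilon]$. For $t$ near the endpoints, $\gamma_0(t)$ is far from $\bar u$ (choosing $\epsilon$ small relative to $\delta$, using continuity of $t\mapsto\bar u_t$), so $\tilde\gamma(t)=\gamma_0(t)$ there and $\mathcal I(\tilde\gamma(t))\le m-\kappa<m$; for $t$ near $1$, $\gamma_0(t)$ is within $\delta$ of $\bar u$ with $\mathcal I(\gamma_0(t))\le m\le m+\sigma$, hence $\mathcal I(\tilde\gamma(t))\le m-\sigma$. Thus $\max_{t}\mathcal I(\tilde\gamma(t))<m$.

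The contradiction is then obtained by showing the deformed path still crosses $\mathcal M$. The function $t\mapsto \mathcal P(\gamma_0(t))=\mathcal P(\bar u_t)$ changes sign at $t=1$ (it is positive for $t<1$ and negative for $t>1$, since $\zeta'(t)=\frac{1}{t}\mathcal P(\bar u_t)$ up to a positive factor, and $t=1$ is the unique maximum by Lemma \ref{lem 2.7}); I would check this sign change is strict and is inherited by $\tilde\gamma$ by taking $\epsilon$ small, or more cleanly, argue via a topological/degree argument that any path joining a region where $\mathcal P>0$ to one where $\mathcal P<0$ must meet $\mathcal M=\{\mathcal P=0\}\setminus\{0\}$ — here one also needs that $\tilde\gamma(t)\neq 0$, which holds because $\mathcal I(\tilde\gamma(t))$ is bounded below away from the values $\mathcal I$ takes near $0$, or because the deformation keeps the path in a set bounded away from $0$. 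Hence there is $t^\ast$ with $\tilde\gamma(t^\ast)\in\mathcal M$, giving $m\le\mathcal I(\tilde\gamma(t^\ast))\le\max_t\mathcal I(\tilde\gamma(t))<m$, a contradiction.

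The main obstacle I expect is the last step: ensuring that the deformed path genuinely still intersects $\mathcal M$ and avoids $0$. The cleanest route is probably to use the scaling variable as a one-dimensional parameter and combine the strict inequality $\mathcal I(\bar u_t)<m$ for $t\neq1$ with the fact that $\mathcal P(\bar u_t)$ is, near $t=1$, comparable to $(1-t)$ times a positive quantity; then the deformation, being a small perturbation localized near $\bar u=\bar u_1$, cannot destroy the sign change, so an intermediate-value argument on $t\mapsto\mathcal P(\tilde\gamma(t))$ finishes the proof. An alternative that sidesteps deformation theory is to use a Lagrange-multiplier argument: at the minimizer $\bar u$ of $\mathcal I|_{\mathcal M}$ one gets $\mathcal I'(\bar u)=\mu\,\mathcal P'(\bar u)$, and then one shows $\mu=0$ by testing against $\bar u$ (via the scaling derivative $\frac{d}{dt}\big|_{t=1}\mathcal I(\bar u_t)=0$) and checking $\langle\mathcal P'(\bar u),\,\partial_t\bar u_t|_{t=1}\rangle\neq0$, the latter again using $\|\nabla\bar u\|_2\ge\rho_0>0$; I would present whichever of the two is shorter given the tools already assembled in this section.
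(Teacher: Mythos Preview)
Your deformation-lemma-plus-scaling-path argument is exactly the approach the paper takes (it defers to \cite[Lemma 2.13]{TC3} and records only the two needed adaptations: the quantitative drop $\mathcal I(\bar u_t)\le m-\tfrac{b(1-t)^2(1+2t)}{12}\|\nabla\bar u\|_2^4$ from \eqref{B21}, and the choice $\varepsilon=\min\bigl\{\tfrac{b(1-T_1)^2(1+2T_1)\|\nabla\bar u\|_2^4}{36},\tfrac{b(1-T_2)^2(1+2T_2)\|\nabla\bar u\|_2^4}{36},1,\tfrac{\varrho\delta}{8}\bigr\}$ in the deformation lemma). The only point to tidy up is the order of parameter choices: one first fixes $T_1<1<T_2$ so that the endpoint energy gap $m-\mathcal I(\bar u_{T_i})$ is a definite positive number, and then takes the deformation parameter $\varepsilon$ small relative to that gap and to $\varrho\delta$ (as the paper's formula makes explicit), rather than taking the interval half-width small relative to $\delta$ as you wrote.
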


 \begin{proof}  Similar to the proof of \cite[Lemma 2.13]{TC3}, we can prove this lemma only by using
 \begin{eqnarray}\label{B90}
   \mathcal{I}\left(\bar{u}_t\right)
   & \le & \mathcal{I}(\bar{u})-\frac{b(1-t)^2(1+2t)}{12}\|\nabla \bar{u}\|_2^4\nonumber\\
   &  =  & m-\frac{b(1-t)^2(1+2t)}{12}\|\nabla \bar{u}\|_2^4, \ \ \ \ \forall \ t> 0.
 \end{eqnarray}
 and
 $$
   \varepsilon:=\min\left\{\frac{b(1-T_1)^2(1+2T_1)\|\nabla\bar{u}\|_2^4}{36}, \frac{b(1-T_2)^2(1+2T_2)\|\nabla\bar{u}\|_2^4}{36}, 1, \frac{\varrho\delta}{8}\right\}
 $$
 instead of \cite[(2.40) and $\varepsilon$] {TC3}, respectively.
 \end{proof}

 \begin{proof}[Proof of Theorem \ref{thm1.2}] In view of Lemmas \ref{lem 2.9}, \ref{lem 2.13} and \ref{lem 2.14},
 there exists $\bar{u}\in \mathcal{M}$ such that
 $$
   \mathcal{I}(\bar{u})=m=\inf_{u\in \Lambda}\max_{t > 0}\mathcal{I}(u_t), \ \ \ \ \mathcal{I}'(\bar{u})=0.
 $$
 This shows that $\bar{u}$ is a ground state solution of \eqref{KE}.
 \end{proof}

 \vskip6mm
 {\section{Proof of Theorem \ref{thm1.1}}}
 \setcounter{equation}{0}

 \vskip2mm
 \par
   In this section, we assume that $V(x)\not\equiv V_{\infty}$ and give the proof of Theorem \ref{thm1.1}.
 \begin{proposition}\label{pro 3.1}{\rm \cite{JTo}}
 Let $X$ be a Banach space and let $J\subset \R^+$ be an interval, and
 $$
   \Phi_{\lambda}(u)=A(u)-\lambda B(u), \ \ \ \ \forall \ \lambda\in J,
 $$
 be a family of $\mathcal{C}^1$-functional on $X$ such that
 \begin{enumerate}[{\rm(i)}]
  \item  either $A(u)\to +\infty$ or $B(u)\to +\infty$, as $\|u\| \to \infty$;
  \item  $B$ maps every bounded set of $X$ into a set of $\R$ bounded below;
  \item  there are two points $v_1, v_2$ in $X$ such that
 \begin{equation}\label{cm}
   \tilde{c}_{\lambda}:=\inf_{\gamma\in \Gamma}\max_{t\in [0, 1]}\Phi_{\lambda}(\gamma(t))>\max\{\Phi_{\lambda}(v_1), \Phi_{\lambda}(v_2)\},
 \end{equation}
 \end{enumerate}
 where
 $$
   \Gamma=\left\{\gamma\in \mathcal{C}([0, 1], X): \gamma(0)=v_1, \gamma(1)=v_2\right\}.
 $$
 Then, for almost every $\lambda\in J$, there exists a sequence  such that
 \begin{enumerate}[{\rm (i)}]
  \item $\{u_n(\lambda)\}$ is bounded in $X$;
  \item $\Phi_{\lambda}(u_n(\lambda))\rightarrow c_{\lambda}$;
  \item $\Phi_{\lambda}'(u_n(\lambda))\rightarrow 0$ in $X^*$, where $X^*$ is the dual of $X$.
 \end{enumerate}
 \end{proposition}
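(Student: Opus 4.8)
This abstract result is the monotonicity trick of Jeanjean and Toland \cite{JTo}; here is how I would reconstruct its proof. The plan is to proceed in three steps: (a) show that $\lambda\mapsto c_\lambda$ is differentiable for almost every $\lambda\in J$; (b) at a point $\lambda_0$ of differentiability, pick $\lambda_n\uparrow\lambda_0$ together with almost-optimal paths $\gamma_n\in\Gamma$ for $c_{\lambda_n}$, and show that the ``high part'' of each $\gamma_n$ lies in a ball whose radius is independent of $n$; (c) run a localized quantitative deformation argument to produce the desired bounded Palais--Smale sequence for $\Phi_{\lambda_0}$ at level $c_{\lambda_0}$.

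For step (a), I would first note that for each fixed $\gamma\in\Gamma$ the map $\lambda\mapsto\max_{t\in[0,1]}\Phi_\lambda(\gamma(t))$ is convex, being a supremum of the affine functions $\lambda\mapsto A(\gamma(t))-\lambda B(\gamma(t))$; convexity is destroyed upon taking the infimum over $\gamma$, so $c_\lambda$ need be neither convex nor monotone. However, hypothesis (i) forces either $A$ or $B$ to be bounded below on $X$ (a continuous function tending to $+\infty$ at infinity is bounded below), and $c_\lambda$ is locally bounded on $J$ (from below by $\Phi_\lambda(v_1)$, from above by $\max_t\Phi_\lambda(\gamma_0(t))$ for a fixed $\gamma_0$). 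For $\lambda<\mu$ in a fixed compact subinterval of $J$, I would choose $\gamma$ with $\max_t\Phi_\lambda(\gamma(t))\le c_\lambda+\varepsilon$, write $\Phi_\mu(\gamma(t))=\Phi_\lambda(\gamma(t))-(\mu-\lambda)B(\gamma(t))$, and bound $B$ from below at the maximizing point using the lower bound on $A$ (or on $B$) together with $\Phi_\lambda(\gamma(t))\le c_\lambda+\varepsilon$; this gives $c_\mu\le c_\lambda+K(\mu-\lambda)$ for a constant $K$ depending only on the subinterval. Hence $\lambda\mapsto c_\lambda-K\lambda$ is non-increasing, so $c_\lambda$ is differentiable a.e. on $J$ by Lebesgue's theorem.

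For steps (b)--(c), I would fix $\lambda_0$ at which $c'(\lambda_0)$ exists, choose $\lambda_n\uparrow\lambda_0$ and $\gamma_n\in\Gamma$ with $\max_t\Phi_{\lambda_n}(\gamma_n(t))\le c_{\lambda_n}+(\lambda_0-\lambda_n)$. For every $t$ with $\Phi_{\lambda_0}(\gamma_n(t))\ge c_{\lambda_0}-(\lambda_0-\lambda_n)$, the identity $(\lambda_0-\lambda_n)B(\gamma_n(t))=\Phi_{\lambda_n}(\gamma_n(t))-\Phi_{\lambda_0}(\gamma_n(t))$ yields $B(\gamma_n(t))\le\frac{c_{\lambda_n}-c_{\lambda_0}}{\lambda_0-\lambda_n}+2$, whose right-hand side tends to $-c'(\lambda_0)+2$; thus $B(\gamma_n(t))$ is bounded above uniformly for large $n$, and the analogous estimate at the global maximizer gives $\max_t\Phi_{\lambda_0}(\gamma_n(t))\le c_{\lambda_0}+C(\lambda_0-\lambda_n)$. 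Combining these, $A(\gamma_n(t))=\Phi_{\lambda_0}(\gamma_n(t))+\lambda_0 B(\gamma_n(t))$ is bounded above at such $t$, so hypothesis (i) gives $R>0$, independent of $n$, with $\|\gamma_n(t)\|\le R$ at those $t$. Then I would argue by contradiction: if $\Phi_{\lambda_0}$ had no bounded Palais--Smale sequence at level $c_{\lambda_0}$, then, $R$ being fixed, there would be $\alpha,\delta>0$ with $\|\Phi'_{\lambda_0}(u)\|\ge\alpha$ whenever $\|u\|\le R+1$ and $|\Phi_{\lambda_0}(u)-c_{\lambda_0}|\le\delta$; a quantitative deformation lemma localized to $\overline{B}_{R+1}$, which leaves $v_1,v_2$ untouched since $\Phi_{\lambda_0}(v_i)<c_{\lambda_0}$ by (iii), would then deform $\gamma_n$ (for $n$ large) into some $\tilde\gamma_n\in\Gamma$ with $\max_t\Phi_{\lambda_0}(\tilde\gamma_n(t))<c_{\lambda_0}$, contradicting the definition of $c_{\lambda_0}$.

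The hard part will be step (b): forcing the relevant portion of the near-optimal paths into a ball of radius independent of $n$. The decisive point is that the ``high part'' must be cut out at the level $c_{\lambda_0}-(\lambda_0-\lambda_n)$, shrinking with $n$, rather than at a fixed level below $c_{\lambda_0}$ --- a fixed threshold $\delta$ would produce a term of order $\delta/(\lambda_0-\lambda_n)$ in the bound for $B(\gamma_n(t))$ that blows up, whereas the shrinking threshold keeps the bounds on $B$, hence on $A$, hence on $\|\gamma_n(t)\|$, uniform. The remaining ingredients --- the deformation lemma on a bounded region and the preservation of the endpoints --- are routine.
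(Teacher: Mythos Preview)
The paper does not give its own proof of Proposition~3.1; the result is quoted from Jeanjean--Toland \cite{JTo} and used as a black box in Section~3. There is therefore nothing in the paper to compare your argument against.

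Your reconstruction follows the standard Struwe--Jeanjean--Jeanjean--Toland strategy: establish a.e.\ differentiability of $\lambda\mapsto c_\lambda$, then at a point of differentiability use near-optimal paths for nearby parameters to trap the high part of the path in a fixed ball and finish with a localized deformation. Steps~(b) and~(c) are in line with the original argument and are essentially correct.

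One technical remark on step~(a): the parenthetical claim that ``a continuous function tending to $+\infty$ at infinity is bounded below'' is false in infinite-dimensional Banach spaces, since closed balls are not compact (one can build a continuous coercive functional on $\ell^2$ that is unbounded below on the unit ball). Hence hypothesis~(i) by itself does not yield a global lower bound on $A$ or on $B$, and your derivation of the one-sided Lipschitz estimate $c_\mu\le c_\lambda+K(\mu-\lambda)$ has a gap as written. In the actual \cite{JTo} argument, hypotheses~(i) and~(ii) are used jointly to control the relevant quantities; the repair is not hard, but it is not quite the one-line observation you record. This is a local imprecision, not a structural error in your plan.
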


 \begin{lemma}\label{lem 3.2}{\rm \cite{Gu}}
 Assume that {\rm (V1)-(V3), (F1)} and {\rm (F2)} hold. Let $u$ be a critical point of
 $\mathcal{I}_{\lambda}$ in $H^1(\R^3)$, then we have the following Poho\u zaev type identity
 \begin{eqnarray}\label{Pl}
  \mathcal{P}_{\lambda}(u)
    & :=  & \frac{a}{2}\|\nabla u\|_2^2+\frac{1}{2}\int_{{\R}^3}\left[3V(x)+\nabla V(x)\cdot x\right]u^2\mathrm{d}x\nonumber\\
    &     & \ \  +\frac{b}{2}\left(\int_{\R^3}|\nabla u|^2\mathrm{d}x\right)^2-3\lambda\int_{{\R}^3}F(u)\mathrm{d}x=0.
 \end{eqnarray}
 \end{lemma}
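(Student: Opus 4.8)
The plan is to derive \eqref{Pl} by the classical Poho\u zaev argument, testing the Euler--Lagrange equation against the dilation field $x\cdot\nabla u$; this is essentially the computation of Guo \cite{Gu}, adapted to the Kirchhoff term. First I would record that a critical point $u$ of $\mathcal{I}_{\lambda}$ is a weak solution of
$$
  -\Big(a+b\|\nabla u\|_2^2\Big)\triangle u+V(x)u=\lambda f(u),\qquad x\in\R^3 ,
$$
which, for this \emph{fixed} $u$, is a linear Schr\"odinger-type equation with the constant, strictly positive leading coefficient $A_u:=a+b\|\nabla u\|_2^2$; hence the nonlocal Kirchhoff factor plays no role in the local regularity theory. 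Using (F1), (F2) together with a standard bootstrap ($L^p$ and Schauder estimates), one gets $u\in W^{2,p}_{\mathrm{loc}}(\R^3)\cap C^{1,\alpha}_{\mathrm{loc}}(\R^3)$ for every $p$, so the integrations by parts below are legitimate, and $u\in H^1(\R^3)$ supplies the integrability needed to kill the boundary terms at infinity.

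Next I would multiply the equation by $x\cdot\nabla u$ and integrate over the ball $B_R$ (equivalently, test with the truncation $\eta(x/R)\,x\cdot\nabla u$), getting
$$
  A_u\int_{B_R}(-\triangle u)(x\cdot\nabla u)\,\mathrm{d}x+\int_{B_R}V(x)u\,(x\cdot\nabla u)\,\mathrm{d}x=\lambda\int_{B_R}f(u)(x\cdot\nabla u)\,\mathrm{d}x .
$$
Each term is handled by integration by parts in $\R^3$ (so $N=3$): the standard identity gives $\int_{B_R}(-\triangle u)(x\cdot\nabla u)=-\tfrac12\int_{B_R}|\nabla u|^2+(\mathrm{bdry})$; writing $u(x\cdot\nabla u)=\tfrac12\,x\cdot\nabla(u^2)$ gives $\int_{B_R}Vu(x\cdot\nabla u)=-\tfrac12\int_{B_R}\big(3V(x)+\nabla V(x)\cdot x\big)u^2+(\mathrm{bdry})$, where finiteness of the interior integral uses (V3) and Hardy's inequality \eqref{B22} exactly as in Lemma \ref{lem 2.5}; and $f(u)(x\cdot\nabla u)=x\cdot\nabla F(u)$ gives $\int_{B_R}f(u)(x\cdot\nabla u)=-3\int_{B_R}F(u)+(\mathrm{bdry})$.

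Then I would show the boundary integrals over $\partial B_R$ vanish along a sequence $R_n\to\infty$. Each such term is $O\!\big(R\int_{\partial B_R}g\big)$ with $g\in\{|\nabla u|^2,\;V(x)u^2,\;|F(u)|\}$, and all three are in $L^1(\R^3)$; since $\int_0^\infty\big(\int_{\partial B_R}g\big)\mathrm{d}R<\infty$, one cannot have $R\int_{\partial B_R}g\ge c>0$ for all large $R$, so along a common subsequence $R_n$ every boundary contribution tends to $0$. Letting $R=R_n\to\infty$ and collecting terms yields
$$
  -\frac{A_u}{2}\|\nabla u\|_2^2-\frac12\int_{\R^3}\big(3V(x)+\nabla V(x)\cdot x\big)u^2\,\mathrm{d}x=-3\lambda\int_{\R^3}F(u)\,\mathrm{d}x ,
$$
and since $A_u\|\nabla u\|_2^2=a\|\nabla u\|_2^2+b\|\nabla u\|_2^4$, this is precisely $\mathcal{P}_{\lambda}(u)=0$. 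The only genuinely delicate point is the regularity/decay bookkeeping that legitimizes the integrations by parts and the vanishing of the boundary terms; everything else is algebra, and the Kirchhoff term never causes trouble because it enters only through the positive constant $A_u$.
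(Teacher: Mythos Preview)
The paper does not supply its own proof of this lemma; it simply records the statement with a citation to Guo \cite{Gu}. Your proposal reproduces exactly the classical Poho\u zaev computation that underlies Guo's result---testing the equation against $x\cdot\nabla u$ on $B_R$, integrating by parts, and sending $R\to\infty$ along a good sequence---so your approach coincides with the one the paper defers to.

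One small caveat: your appeal to ``exactly as in Lemma \ref{lem 2.5}'' is slightly off, since that lemma is proved under (V4) whereas here only (V3) is assumed. Under (V3) you have only the one-sided bound $\nabla V(x)\cdot x\le \tfrac{a}{2|x|^2}$, which via Hardy controls the integral from above but not from below. This is not a genuine obstacle---the identity is obtained as a limit of the finite-ball relations rather than by first establishing absolute integrability of $\nabla V(x)\cdot x\,u^2$---but the justification should be phrased accordingly, relying on the cutoff formulation you already mention rather than on Lemma \ref{lem 2.5}.
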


 Correspondingly, we also let
 \begin{eqnarray}\label{PlL}
  \mathcal{P}_{\lambda}^{\infty}(u)
    =\frac{a}{2}\|\nabla u\|_2^2+\frac{3V_\infty}{2}\int_{{\R}^3}u^2\mathrm{d}x
    +\frac{b}{2}\left(\int_{\R^3}|\nabla u|^2\mathrm{d}x\right)^2-3\lambda\int_{{\R}^3}F(u)\mathrm{d}x,
 \end{eqnarray}
 for $\lambda\in [1/2, 1]$. Set
 \begin{equation}\label{cm0}
  \mathcal{M}_{\lambda}^{\infty}:=\{u\in H^1(\R^3)\setminus \{0\}: \mathcal{P}_{\lambda}^{\infty}(u)=0\}, \ \ \ \
   m_{\lambda}^{\infty}:=\inf_{u\in \mathcal{M}_{\lambda}^{\infty}}\mathcal{I}_{\lambda}^{\infty}(u).
 \end{equation}

 \vskip4mm
 \par
    By Corollary \ref{cor 2.3}, we have the following lemma.

 \begin{lemma}\label{lem 3.3}
 Assume that {\rm (F1)} and {\rm (F2)}  hold. Then
 \begin{eqnarray}\label{N45}
   \mathcal{I}_{\lambda}^{\infty}(u)
    & \ge & \mathcal{I}_{\lambda}^{\infty}\left(u_t\right)+\frac{1-t^3}{3}\mathcal{P}_{\lambda}^{\infty}(u)
              +\frac{b(1-t)^2(1+2t)}{12}\|\nabla u\|_2^4,\nonumber\\
    &     &  \ \ \ \ \ \ \ \ \forall \ u\in H^1(\R^3), \ \ t > 0.
 \end{eqnarray}
 \end{lemma}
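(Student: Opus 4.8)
The plan is to deduce \eqref{N45} directly from Corollary \ref{cor 2.3} by a scaling of the nonlinearity. The key observation is that, for fixed $\lambda\in[1/2,1]$, the functional $\mathcal{I}_{\lambda}^{\infty}$ defined in \eqref{Il} is obtained from $\mathcal{I}^{\infty}$ simply by replacing $F$ with $\lambda F$, equivalently by replacing $f$ with $f_{\lambda}:=\lambda f$; likewise $\mathcal{P}_{\lambda}^{\infty}$ in \eqref{PlL} is precisely the Poho\u zaev functional $\mathcal{P}^{\infty}$ associated with $f_{\lambda}$. Since $0<\lambda\le1$, the function $f_{\lambda}$ still satisfies {\rm (F1)} (with constant $\lambda\mathcal{C}_0\le\mathcal{C}_0$) and {\rm (F2)} (indeed $\lambda f(t)=o(t)$ as $t\to0$ and $|\lambda f(t)|=o(|t|^5)$ as $|t|\to\infty$). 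Hence Corollary \ref{cor 2.3}, applied with $f$ replaced by $f_{\lambda}$, yields \eqref{N45} at once.

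Alternatively, and just as quickly, one can argue by a direct computation mimicking the proof of Lemma \ref{lem 2.2}. From $u_t(x)=u(x/t)$ one has
$$
\mathcal{I}_{\lambda}^{\infty}(u_t)=\frac{at}{2}\|\nabla u\|_2^2+\frac{t^3}{2}V_{\infty}\|u\|_2^2+\frac{bt^2}{4}\|\nabla u\|_2^4-\lambda t^3\int_{\R^3}F(u)\mathrm{d}x,
$$
so that, using \eqref{Il} and \eqref{PlL} and noting that the terms containing $V_{\infty}$ and $\int_{\R^3}F(u)\mathrm{d}x$ cancel identically,
$$
\mathcal{I}_{\lambda}^{\infty}(u)-\mathcal{I}_{\lambda}^{\infty}(u_t)-\frac{1-t^3}{3}\mathcal{P}_{\lambda}^{\infty}(u)=\frac{a(1-t)^2(2+t)}{6}\|\nabla u\|_2^2+\frac{b(1-t)^2(1+2t)}{12}\|\nabla u\|_2^4,
$$
where one only uses the elementary identities $2-3t+t^3=(1-t)^2(2+t)$ and $1-3t^2+2t^3=(1-t)^2(1+2t)$. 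Since the gradient term on the right-hand side is nonnegative for every $t>0$, \eqref{N45} follows.

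There is essentially no obstacle here: the statement is a straightforward specialization of Corollary \ref{cor 2.3}, the only points worth checking being the invariance of {\rm (F1)}--{\rm (F2)} under multiplication by $\lambda\in(0,1]$ and the two polynomial identities above.
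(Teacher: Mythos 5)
Your proposal is correct, and your first argument is precisely the paper's route: the paper simply states that Lemma \ref{lem 3.3} follows ``by Corollary \ref{cor 2.3}'', with the observation that $f\mapsto\lambda f$ preserves (F1)--(F2) left implicit. Your alternative direct computation is also valid --- both factorizations $2-3t+t^3=(1-t)^2(2+t)$ and $1-3t^2+2t^3=(1-t)^2(1+2t)$ check out, and it in fact yields the stronger exact identity with the additional nonnegative term $\frac{a(1-t)^2(2+t)}{6}\|\nabla u\|_2^2$.
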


 \begin{lemma}\label{lem 3.4}
 Assume that {\rm (V1)-(V3)} and {\rm (F1)-(F3)} hold. Then
 \begin{enumerate}[{\rm (i)}]
  \item there exists $T>0$ independent of $\lambda$ such that $\mathcal{I}_{\lambda}\left((u_1^{\infty})_{T}\right)<0$
 for all $\lambda\in [0.5, 1]$;
  \item there exists a positive constant $\kappa_0 $ independent of $\lambda$ such that for all $\lambda\in [0.5, 1]$,
 \begin{equation*}
   c_{\lambda}:=\inf_{\gamma\in \Gamma}\max_{t\in [0, 1]}\mathcal{I}_{\lambda}(\gamma(t))\ge \kappa_0
     >\max\left\{\mathcal{I}_{\lambda}(0), \mathcal{I}_{\lambda}\left((u_1^{\infty})_{T}\right)\right\},
 \end{equation*}
 where
 $$
   \Gamma=\left\{\gamma\in \mathcal{C}([0, 1], H^1(\R^3)): \gamma(0)=0, \gamma(1)=(u_1^{\infty})_{T}\right\};
 $$
 \item $c_{\lambda}$ is bounded for $\lambda\in [0.5, 1]$;
 \item $m_{\lambda}^{\infty}$ is non-increasing on $\lambda\in [0.5, 1]$;
 \item $\limsup_{\lambda\to \lambda_0}c_{\lambda}\le c_{\lambda_0}$ for $\lambda_0\in(0.5, 1]$.
 \end{enumerate}
\end{lemma}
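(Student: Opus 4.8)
The plan is to verify the five assertions in order, since each is a small variation of a standard step in the Jeanjean--Toland minimax setup. For \emph{(i)}, fix the radial function $u_1^{\infty}\in\mathcal{M}^{\infty}$ (which exists by Corollary \ref{cor1.3}, applied with $\lambda=1$) and compute, using the explicit formula \eqref{B23} for $\mathcal{I}_\lambda\bigl((u_1^{\infty})_t\bigr)$,
$$
\mathcal{I}_{\lambda}\bigl((u_1^{\infty})_t\bigr)=\frac{at}{2}\|\nabla u_1^{\infty}\|_2^2+\frac{t^3}{2}\int_{\R^3}V(tx)(u_1^{\infty})^2\mathrm{d}x+\frac{bt^2}{4}\|\nabla u_1^{\infty}\|_2^4-\lambda t^3\int_{\R^3}F(u_1^{\infty})\mathrm{d}x.
$$
Since $V(x)\le V_\infty$ and $u_1^{\infty}\in\mathcal{M}^{\infty}\subset\Lambda$ (by Lemma \ref{lem 2.6}), the bracket $\int_{\R^3}\bigl[\tfrac12 V_\infty (u_1^{\infty})^2-F(u_1^{\infty})\bigr]\mathrm{d}x<0$, so the $t^3$ terms dominate as $t\to\infty$ uniformly for $\lambda\in[1/2,1]$; hence one may pick a single $T>0$, independent of $\lambda$, with $\mathcal{I}_{\lambda}\bigl((u_1^{\infty})_T\bigr)<0$ for all such $\lambda$. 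For \emph{(ii)}, the mountain-pass geometry near $0$ is uniform in $\lambda$: by (F1), (F2), the Sobolev embedding and Lemma \ref{lem 2.5} (or directly $A(u)\ge \tfrac12\min\{a,\gamma_1\}\|u\|^2 $ up to lower-order terms), there are $\rho,\kappa_0>0$ independent of $\lambda$ with $\mathcal{I}_\lambda(u)\ge\kappa_0$ on $\|u\|=\rho$; since every path in $\Gamma$ joins $0$ to a point of negative energy and hence crosses the sphere $\|u\|=\rho$, we get $c_\lambda\ge\kappa_0>0=\mathcal{I}_\lambda(0)>\mathcal{I}_\lambda((u_1^{\infty})_T)$.

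For \emph{(iii)}, an upper bound on $c_\lambda$ comes from testing with the single path $t\mapsto (u_1^{\infty})_{tT}$ (or its obvious reparametrisation on $[0,1]$): along it, $\mathcal{I}_\lambda$ is bounded above by a constant depending only on $u_1^{\infty}$, $a$, $b$, $V_\infty$ and $\sup_{t\in[0,1]}\!\int F(u_1^{\infty})$ but not on $\lambda\in[1/2,1]$ (one uses $\lambda\ge 0$ to discard $-\lambda\int F(u_1^\infty)_t$ when $F\ge 0$ there and $\lambda\le 1$ otherwise, or simply bounds $|\lambda|\le 1$). Together with \emph{(ii)} this gives $0<\kappa_0\le c_\lambda\le C$. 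For \emph{(iv)}: if $1/2\le\lambda_1<\lambda_2\le 1$ and $u\in\mathcal{M}_{\lambda_2}^{\infty}$, then $\mathcal{P}_{\lambda_1}^{\infty}(u)>\mathcal{P}_{\lambda_2}^{\infty}(u)=0$ on any $u\in\Lambda$ (here one needs $\int F(u)>0$, which follows since $u\in\mathcal{M}_{\lambda_2}^\infty\subset\Lambda$ so $\int F(u)>\tfrac12 V_\infty\|u\|_2^2>0$); hence by the analogue of Lemma \ref{lem 2.7} for $\mathcal{I}_{\lambda_1}^{\infty}$ there is $t_u\in(0,1)$ with $u_{t_u}\in\mathcal{M}_{\lambda_1}^{\infty}$, and by Lemma \ref{lem 3.3} (with $t=t_u<1$, so the last two terms are $\ge 0$),
$$
m_{\lambda_1}^{\infty}\le\mathcal{I}_{\lambda_1}^{\infty}(u_{t_u})\le\mathcal{I}_{\lambda_1}^{\infty}(u)=\mathcal{I}_{\lambda_2}^{\infty}(u)+(\lambda_2-\lambda_1)\!\int_{\R^3}\!F(u)\,\mathrm{d}x .
$$
Wait — this shows $m_{\lambda_1}^\infty\le\mathcal{I}_{\lambda_2}^\infty(u)+(\lambda_2-\lambda_1)\int F(u)$; since $\mathcal{I}^\infty_{\lambda}(u_{t})$ is maximised over $t>0$ at the point of $\mathcal{M}^\infty_\lambda$, a cleaner route is: for $u\in\mathcal{M}_{\lambda_2}^\infty$ with $\mathcal{I}_{\lambda_2}^\infty(u)=\max_{t>0}\mathcal{I}_{\lambda_2}^\infty(u_t)$ realising $m_{\lambda_2}^\infty$, pick $t_u\le 1$ with $u_{t_u}\in\mathcal{M}_{\lambda_1}^\infty$, then $m_{\lambda_1}^\infty\le\mathcal{I}_{\lambda_1}^\infty(u_{t_u})\le\mathcal{I}_{\lambda_2}^\infty(u_{t_u})\le\mathcal{I}_{\lambda_2}^\infty(u)=m_{\lambda_2}^\infty$, using $\lambda_1\le\lambda_2$ and $\int F(u_{t_u})\ge 0$ (true on $\mathcal{M}^\infty_{\lambda_1}$). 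This gives monotonicity; I expect checking the sign of $\int F$ along the rescaled path to be the only subtle point.

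Finally, for \emph{(v)}, fix $\lambda_0\in(1/2,1]$ and $\varepsilon>0$; choose $\gamma_\varepsilon\in\Gamma$ with $\max_{t\in[0,1]}\mathcal{I}_{\lambda_0}(\gamma_\varepsilon(t))\le c_{\lambda_0}+\varepsilon$. For $\lambda$ near $\lambda_0$,
$$
\mathcal{I}_{\lambda}(\gamma_\varepsilon(t))=\mathcal{I}_{\lambda_0}(\gamma_\varepsilon(t))+(\lambda_0-\lambda)\!\int_{\R^3}\!F(\gamma_\varepsilon(t))\,\mathrm{d}x,
$$
and since $\gamma_\varepsilon([0,1])$ is compact in $H^1(\R^3)$, $\sup_{t}\bigl|\!\int F(\gamma_\varepsilon(t))\bigr|<\infty$, so $\max_t\mathcal{I}_\lambda(\gamma_\varepsilon(t))\le c_{\lambda_0}+\varepsilon+C|\lambda-\lambda_0|$, giving $\limsup_{\lambda\to\lambda_0}c_\lambda\le c_{\lambda_0}+\varepsilon$; let $\varepsilon\to 0$. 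The main obstacle in the whole lemma is minor: it is the uniform-in-$\lambda$ choice of $T$ in (i) and the sign bookkeeping of $\int F$ in (iv); everything else is routine once Corollary \ref{cor1.3} and Lemmas \ref{lem 2.5}, \ref{lem 2.6}, \ref{lem 3.3} are in hand.
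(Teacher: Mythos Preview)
Your plan matches the paper's intended approach (the paper itself omits the proof as ``standard'', referring to \cite{Je} for (v)), and parts (ii), (iii), (v) are fine. However, your argument for (iv) contains a genuine direction error, and (i) has a subtle gap.

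\textbf{Part (iv).} You want $m_{\lambda}^{\infty}$ \emph{non-increasing}, i.e.\ $\lambda_1<\lambda_2\Rightarrow m_{\lambda_1}^{\infty}\ge m_{\lambda_2}^{\infty}$, but your chain
$m_{\lambda_1}^{\infty}\le \mathcal{I}_{\lambda_1}^{\infty}(u_{t_u})\le \mathcal{I}_{\lambda_2}^{\infty}(u_{t_u})\le \mathcal{I}_{\lambda_2}^{\infty}(u)=m_{\lambda_2}^{\infty}$
proves the \emph{opposite} monotonicity. Moreover the middle inequality is false: since $\int F(u_{t_u})>0$ and $\lambda_1<\lambda_2$, one has $\mathcal{I}_{\lambda_1}^{\infty}(v)-\mathcal{I}_{\lambda_2}^{\infty}(v)=(\lambda_2-\lambda_1)\int F(v)\ge 0$, so $\mathcal{I}_{\lambda_1}^{\infty}(u_{t_u})\ge \mathcal{I}_{\lambda_2}^{\infty}(u_{t_u})$. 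Also, for $u\in\mathcal{M}_{\lambda_2}^{\infty}$ one gets $\mathcal{P}_{\lambda_1}^{\infty}(u)>0$, which forces $t_u>1$, not $t_u\le 1$. The fix is to reverse the roles: start from any $u\in\mathcal{M}_{\lambda_1}^{\infty}$, note $\mathcal{P}_{\lambda_2}^{\infty}(u)<0$, find $t_u<1$ with $u_{t_u}\in\mathcal{M}_{\lambda_2}^{\infty}$, and use Corollary~\ref{cor 2.4} (for $\mathcal{I}_{\lambda_1}^{\infty}$) together with $\mathcal{I}_{\lambda_1}^{\infty}\ge\mathcal{I}_{\lambda_2}^{\infty}$ on $\Lambda$ to obtain $\mathcal{I}_{\lambda_1}^{\infty}(u)\ge \mathcal{I}_{\lambda_1}^{\infty}(u_{t_u})\ge \mathcal{I}_{\lambda_2}^{\infty}(u_{t_u})\ge m_{\lambda_2}^{\infty}$.

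\textbf{Part (i).} After bounding $V(tx)\le V_{\infty}$, the coefficient of $t^{3}$ is $\tfrac{V_{\infty}}{2}\|u_1^{\infty}\|_2^{2}-\lambda\int F(u_1^{\infty})$, which is negative for $\lambda=1$ (since $u_1^{\infty}\in\Lambda$) but not obviously so for $\lambda=1/2$: membership in $\Lambda$ only gives $\int F(u_1^{\infty})>\tfrac12 V_{\infty}\|u_1^{\infty}\|_2^{2}$, whereas you would need $\int F(u_1^{\infty})>V_{\infty}\|u_1^{\infty}\|_2^{2}$. From $\mathcal{P}^{\infty}(u_1^{\infty})=0$ one computes $\int F(u_1^{\infty})-V_{\infty}\|u_1^{\infty}\|_2^{2}=\tfrac{1}{6}(a\|\nabla u_1^{\infty}\|_2^{2}+b\|\nabla u_1^{\infty}\|_2^{4})-\tfrac12 V_{\infty}\|u_1^{\infty}\|_2^{2}$, which has no fixed sign. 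The rest of the paper only needs $\lambda\in(\bar{\lambda},1]$ with $\bar{\lambda}$ close to $1$, so replacing $[1/2,1]$ by $[\delta,1]$ with $\delta>\tfrac{V_{\infty}\|u_1^{\infty}\|_2^{2}}{2\int F(u_1^{\infty})}$ repairs this; as stated for the full interval $[1/2,1]$, your justification (and, arguably, the paper's) is incomplete.
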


   Since $m_{\lambda}^{\infty}=\mathcal{I}_{\lambda}^{\infty}(u_{\lambda}^{\infty})$ and
 $\int_{\R^3}F(u_{\lambda}^{\infty})\mathrm{d}x>0$, then the proof of (i)-(iv) in Lemma \ref{lem 3.4} is
 standard, (v) can be proved similar to \cite[Lemma 2.3]{Je}, so we omit it.

    In view of Corollary \ref{cor1.3}, $\mathcal{I}_1^{\infty}=\mathcal{I}^{\infty}$ has a minimizer $u_1^{\infty}\ne 0$
 on $\mathcal{M}_1^{\infty} =\mathcal{M}^{\infty}$,  i.e.
 \begin{equation}\label{N47}
   u_1^{\infty}\in \mathcal{M}_1^{\infty}, \ \ \ \ (\mathcal{I}_1^{\infty})'(u_1^{\infty})=0
     \ \ \ \  \mbox{and} \ \ \ \ m_1^{\infty}=\mathcal{I}_1^{\infty}(u_1^{\infty}),
 \end{equation}
 where $m_{\lambda}^{\infty}$ is defined by \eqref{cm0}. Since \eqref{KE4} is autonomous, $V\in \mathcal{C}(\R^3, \R)$ and
 $V(x)\le V_{\infty}$ but $V(x)\not\equiv V_{\infty}$, then there exist $\bar{x}\in \R^3$ and $\bar{r}>0$ such that
 \begin{equation}\label{N48}
    V_{\infty}-V(x)>0, \ \ |u_1^{\infty}(x)|>0\ \ \ \ a.e. \ |x-\bar{x}|\le \bar{r}.
 \end{equation}

 \begin{lemma}\label{lem 3.5}
 Assume that {\rm (V1)-(V3)} and {\rm (F1)-(F3)} hold. Then
 there exists $\bar{\lambda}\in [1/2, 1)$ such that $c_{\lambda}<m_{\lambda}^{\infty}$ for $\lambda\in (\bar{\lambda}, 1]$.
\end{lemma}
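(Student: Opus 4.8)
The plan is to compare the mountain-pass level $c_\lambda$ for $\mathcal I_\lambda$ with the Pohozaev ground-state level $m_\lambda^\infty$ for the limit problem, exploiting that $\mathcal I_\lambda$ is strictly smaller than $\mathcal I_\lambda^\infty$ \emph{at the right test function} because $V(x)\le V_\infty$ with strict inequality on a set of positive measure where $u_1^\infty$ does not vanish. First I would use the ``least energy squeeze'' available to us: by Corollary~\ref{cor1.3} and \eqref{N47} the function $u_1^\infty$ minimizes $\mathcal I_1^\infty$ on $\mathcal M_1^\infty$, hence by Corollary~\ref{cor 2.3} and the IIP inequality it satisfies $\mathcal I_1^\infty(u_1^\infty)=\max_{t>0}\mathcal I_1^\infty\big((u_1^\infty)_t\big)=m_1^\infty$. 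For $\lambda\in[1/2,1]$ set $w=(u_1^\infty)_{T}$ with $T$ from Lemma~\ref{lem 3.4}(i); then the path $t\mapsto (tw)$ (or rather $t\mapsto w_{tT_*}$ suitably reparametrized so that the endpoint is $w$ and $\mathcal I_\lambda$ is negative there) is admissible in $\Gamma$, so
\begin{equation}\label{prop-cbound}
  c_\lambda\le \max_{t>0}\mathcal I_\lambda\big((u_1^\infty)_t\big).
\end{equation}

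Next I would estimate the right-hand side of \eqref{prop-cbound} from above by $m_\lambda^\infty$, picking up a strictly negative ``gain'' from the potential. Using \eqref{IU}, \eqref{Ilu}, \eqref{Il}, and \eqref{N48},
\[
  \mathcal I_\lambda\big((u_1^\infty)_t\big)
  =\mathcal I_\lambda^\infty\big((u_1^\infty)_t\big)-\frac{t^3}{2}\int_{\R^3}\big[V_\infty-V(tx)\big]\,(u_1^\infty)^2\,\mathrm dx
  \le \mathcal I_\lambda^\infty\big((u_1^\infty)_t\big),
\]
and by Lemma~\ref{lem 3.3} together with Corollary~\ref{cor 2.8} (which gives a unique dilation of $u_1^\infty$ onto $\mathcal M_\lambda^\infty$) we get $\max_{t>0}\mathcal I_\lambda^\infty\big((u_1^\infty)_t\big)\le m_\lambda^\infty$ once $u_1^\infty\in\Lambda$, which holds by Lemma~\ref{lem 2.6} since $\mathcal P^\infty(u_1^\infty)=0$. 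Thus $c_\lambda\le m_\lambda^\infty$ for every $\lambda\in[1/2,1]$; the point is to upgrade this to a \emph{strict} inequality on a left-neighbourhood of $1$. Here I would argue by contradiction: if no such $\bar\lambda$ exists, there is a sequence $\lambda_k\uparrow 1$ with $c_{\lambda_k}=m_{\lambda_k}^\infty$. Combining Lemma~\ref{lem 3.4}(iv)-(v) (monotonicity of $m_\lambda^\infty$ and upper semicontinuity of $c_\lambda$) with the continuity of $\lambda\mapsto m_\lambda^\infty$ at $\lambda=1$, one obtains $c_1\ge m_1^\infty$. On the other hand, taking $\lambda=1$ in the displayed chain above and using \eqref{N48} to make the potential term \emph{strictly} negative at the maximizing dilation $t_*>0$ of $u_1^\infty$ (note $t_*$ stays bounded away from $0$ by Lemma~\ref{lem 2.11}(i)-type estimates), we get
\[
  c_1\le \max_{t>0}\mathcal I_1\big((u_1^\infty)_t\big)
  \le \mathcal I_1^\infty\big((u_1^\infty)_{t_*}\big)-\frac{t_*^3}{2}\int_{\R^3}\big[V_\infty-V(t_*x)\big](u_1^\infty)^2\,\mathrm dx
  < m_1^\infty,
\]
a contradiction. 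Hence $c_{\bar\lambda}<m_{\bar\lambda}^\infty$ for some $\bar\lambda<1$, and by the same one-variable comparison $c_\lambda<m_\lambda^\infty$ persists for all $\lambda\in(\bar\lambda,1]$ after possibly enlarging $\bar\lambda$, using continuity/monotonicity of both sides in $\lambda$.

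The main obstacle I anticipate is making the strict gap \emph{uniform} in $\lambda$ near $1$ rather than merely holding at $\lambda=1$: the quantity $c_1-m_1^\infty<0$ is strictly negative, but $c_\lambda$ is only upper semicontinuous (Lemma~\ref{lem 3.4}(v)) and $m_\lambda^\infty$ is only known to be monotone, so one must carefully combine these one-sided regularities — together with the fact that the maximizing dilation $t_*=t_*(\lambda)$ of $u_1^\infty$ onto $\mathcal M_\lambda^\infty$ depends continuously on $\lambda$ and stays in a fixed compact subset of $(0,\infty)$ — to conclude that $\max_{t>0}\mathcal I_\lambda\big((u_1^\infty)_t\big)-m_\lambda^\infty$ is bounded above by a negative constant for $\lambda$ close to $1$. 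A secondary technical point is verifying that the straight-line-through-dilations path can be taken inside $\Gamma$ with the correct endpoints $0$ and $(u_1^\infty)_T$; this is routine once one knows $\mathcal I_\lambda\big((u_1^\infty)_t\big)\to 0^+$ as $t\to 0^+$ and is negative for $t$ large, uniformly for $\lambda\in[1/2,1]$, which is exactly Lemma~\ref{lem 3.4}(i).
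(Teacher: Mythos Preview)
Your overall architecture (reduce to $c_1<m_1^\infty$ and then propagate by the one-sided regularities of Lemma~\ref{lem 3.4}(iv)--(v)) is sound, and in fact the contradiction step already yields the full conclusion: if for every $\bar\lambda<1$ there is $\lambda\in(\bar\lambda,1]$ with $c_\lambda\ge m_\lambda^\infty$, then $\limsup c_{\lambda_k}\ge m_1^\infty$ by (iv) while $\limsup c_{\lambda_k}\le c_1$ by (v), contradicting $c_1<m_1^\infty$. So your ``main obstacle'' paragraph is not where the difficulty lies. Two points do need repair.

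First, the inequality $\max_{t>0}\mathcal I_\lambda^\infty\big((u_1^\infty)_t\big)\le m_\lambda^\infty$ is backwards: Corollary~\ref{cor 2.8} puts a dilation of $u_1^\infty$ \emph{into} $\mathcal M_\lambda^\infty$, so the max equals $\mathcal I_\lambda^\infty$ at a point of $\mathcal M_\lambda^\infty$, hence is $\ge m_\lambda^\infty$. This is not fatal (you do not actually need $c_\lambda\le m_\lambda^\infty$ for the contradiction), but as written it is false for $\lambda<1$.

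Second, and this is the real gap, your strict inequality $c_1<m_1^\infty$ rests on
\[
\int_{\R^3}\big[V_\infty-V(t_*x)\big]\,(u_1^\infty)^2\,\mathrm dx>0
\]
at the maximizer $t_*$ of $t\mapsto\mathcal I_1\big((u_1^\infty)_t\big)$. Relation \eqref{N48} only says $V_\infty-V>0$ and $|u_1^\infty|>0$ on the \emph{same} ball $B_{\bar r}(\bar x)$; after the dilation, $V_\infty-V(t_*\cdot)>0$ holds on the different ball $B_{\bar r/t_*}(\bar x/t_*)$, and you have no control on $u_1^\infty$ there unless $t_*$ is close to $1$. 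Knowing $t_*$ is bounded away from $0$ does not help; with no sign information on $u_1^\infty$ (the paper explicitly avoids assuming any), this integral could vanish. The paper's remedy is exactly a two-case split on the location of $t_\lambda$: if $t_\lambda\in[1-\zeta_0,1+\zeta_0]$ one uses \eqref{G58} so the dilated ball still sits inside $B_{\bar r}(\bar x)$ and the potential gain is uniformly positive; if $t_\lambda$ lies outside this window one discards the potential term entirely and instead exploits the extra $\frac{b(1-t_\lambda)^2(1+2t_\lambda)}{12}\|\nabla u_1^\infty\|_2^4$ coming from the IIP inequality \eqref{N45}. Either gain is then balanced against the $(1-\lambda)T^3\int F(u_1^\infty)$ error, which produces the explicit $\bar\lambda$. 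Your argument is missing precisely this second source of strictness; once you add it, you have essentially reproduced the paper's proof at $\lambda=1$ and wrapped it in a (correct but unnecessary) contradiction.
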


 \begin{proof} It is easy to see that $\mathcal{I}_{\lambda}\left((u_1^{\infty})_t\right)$ is continuous on $t\in (0, \infty)$.
 Hence for any $\lambda\in [1/2, 1]$,  we can choose $t_{\lambda}\in (0, T)$ such that $\mathcal{I}_{\lambda}
 \left((u_1^{\infty})_{t_{\lambda}}\right) =\max_{t\in (0,T]}\mathcal{I}_{\lambda}\left((u_1^{\infty})_t\right)$. Setting
 $$
   \gamma_0(t)=\left\{\begin{array}{ll}
    (u_1^{\infty})_{(tT)}, \ \ &\mbox{for} \ t>0,\\
    0, \ \ & \mbox{for} \ t=0.
    \end{array}\right.
 $$
 Then $\gamma_0\in \Gamma$ defined by Lemma \ref{lem 3.4} (ii). Moreover
 \begin{equation}\label{N50}
   \mathcal{I}_{\lambda} \left((u_1^{\infty})_{t_{\lambda}}\right)=\max_{t\in [0,1]}\mathcal{I}_{\lambda}\left(\gamma_0(t)\right)
       \ge c_{\lambda}.
 \end{equation}
 Let
 \begin{equation}\label{G56}
   \zeta_0:=\min\{3\bar{r}/8(1+|\bar{x}|), 1/4\}.
 \end{equation}
 Then it follows from \eqref{G56} that
 \begin{equation}\label{G58}
   |x-\bar{x}|\le \frac{\bar{r}}{2} \ \ \mbox{and} \ \ s\in [1-\zeta_0, 1+\zeta_0] \Rightarrow |sx-\bar{x}|\le \bar{r}.
 \end{equation}
 Since $\mathcal{P}^{\infty}(u_1^{\infty})=0$, then $\int_{\R^N}F(u_1^{\infty})\mathrm{d}x>0$. Let
 \begin{eqnarray*}\label{N59}
   \bar{\lambda}
    & :=  & \max\left\{\frac{1}{2}, 1-\frac{\min_{s\in [1-\zeta_0, 1+\zeta_0]}\int_{\R^3}\left[V_{\infty}-
              V(sx)\right]|u_1^{\infty}|^2\mathrm{d}x}{2(1-\zeta_0)^{-3}T^3\int_{\R^3}F(u_1^{\infty})\mathrm{d}x},1-\frac{b\zeta_0^2\|\nabla u_1^{\infty}\|_2^4}{12T^3\int_{\R^3}F(u_1^{\infty})\mathrm{d}x}\right\} \ \ \ \ \ \
 \end{eqnarray*}
 Then it follows from \eqref{N48} and \eqref{G58} that $1/2\le \bar{\lambda}<1$. We have two cases to distinguish:

 \vskip2mm
 \par
   Case i). $t_{\lambda}\in [1-\zeta_0, 1+\zeta_0]$. From \eqref{Ilu}, \eqref{Il}, \eqref{N45}-\eqref{N50}, \eqref{G56} and
 Lemma \ref{lem 3.4} (iv), we have
 \begin{eqnarray*}
   m_{\lambda}^{\infty}
    & \ge & m_1^{\infty}=\mathcal{I}_1^{\infty}(u_1^{\infty})\ge \mathcal{I}_1^{\infty}\left((u_1^{\infty})_{t_{\lambda}}\right)\nonumber\\
    &  =  & \mathcal{I}_{\lambda}\left((u_1^{\infty})_{t_{\lambda}}\right)
              -(1-\lambda)t_{\lambda}^3\int_{\R^3}F(u_1^{\infty})\mathrm{d}x +\frac{t_{\lambda}^3}{2}\int_{\R^3}[V_{\infty}-V(t_{\lambda}x)]|u_1^{\infty}|^2\mathrm{d}x\nonumber\\
    & \ge & c_{\lambda} -(1-\lambda)T^3\int_{\R^3}F(u_1^{\infty})\mathrm{d}x\nonumber\\
    &     & \ \     +\frac{(1-\zeta_0)^3}{2}\min_{s\in [1-\zeta_0, 1+\zeta_0]}
                \int_{\R^3}\left[V_{\infty}-V(sx)\right]|u_1^{\infty}|^2\mathrm{d}x\nonumber\\
    &  >  & c_{\lambda}, \ \ \ \ \forall \ \lambda\in (\bar{\lambda}, 1].
 \end{eqnarray*}
 \par
   Case ii). $t_{\lambda}\in (0, 1-\zeta_0)\cup (1+\zeta_0, T]$. From \eqref{Ilu}, \eqref{Il}, \eqref{N45}, \eqref{N47},
 \eqref{N50}, \eqref{G56} and Lemma \ref{lem 3.4} (iv), we have
 \begin{eqnarray*}
   m_{\lambda}^{\infty}
    & \ge & m_1^{\infty}=\mathcal{I}_1^{\infty}(u_1^{\infty})\ge \mathcal{I}_1^{\infty}\left((u_1^{\infty})_{t_{\lambda}}\right)
             +\frac{b(1-t_{\lambda})^2(1+2t_{\lambda})}{12}\|\nabla u_1^{\infty}\|_2^4\nonumber\\
    &  =  & \mathcal{I}_{\lambda}\left((u_1^{\infty})_{t_{\lambda}}\right)
              -(1-\lambda)t_{\lambda}^3\int_{\R^3}F(u_1^{\infty})\mathrm{d}x\nonumber\\
    &     & \ \  +\frac{t_{\lambda}^3}{2}\int_{\R^3}[V_{\infty}-V(t_{\lambda}x)]|u_1^{\infty}|^2\mathrm{d}x
              +\frac{b(1-t_{\lambda})^2(1+2t_{\lambda})}{12}\|\nabla u_1^{\infty}\|_2^4\nonumber\\
    & \ge & c_{\lambda} -(1-\lambda)T^3\int_{\R^3}F(u_1^{\infty})\mathrm{d}x
              +\frac{b\zeta_0^2}{12}\|\nabla u_1^{\infty}\|_2^4\nonumber\\
    &  >  & c_{\lambda}, \ \ \ \ \forall \ \lambda\in (\bar{\lambda}, 1].
 \end{eqnarray*}
 In both cases, we obtain that $c_{\lambda}<m_{\lambda}^{\infty}$ for $\lambda\in (\bar{\lambda}, 1]$.
 \end{proof}

 \begin{lemma}\label{lem 3.6}
 Assume that {\rm (V1)-(V3)} and {\rm (F1)-(F3)} hold. Let $\{u_n\}$ be a bounded (PS)$_{c_{\lambda}}$
 sequence for $\mathcal{I}_{\lambda}$ with $\lambda\in  [1/2, 1]$. Then there exist a subsequence of $\{u_n\}$, still denoted by
 $\{u_n\}$, an integer $l\in \N \cup \{0\}$, and $u_{\lambda}\in H^1(\R^3)$ such that
 \begin{enumerate}[{\rm (i)}]
  \item $A_{\lambda}^2:=\lim_{n\to\infty}\|\nabla u_n\|_2^2$ exists, $u_n\rightharpoonup u_{\lambda}$ in $H^1(\R^3)$ and $\mathcal{E}_{\lambda}'(u_{\lambda})=0$;
  \item $w^k\ne 0$ and $(\mathcal{E}_{\lambda}^{\infty})'(w^k)=0$ for $1\le k\le l$;
 \item
 \begin{equation}\label{N60}
    c+\frac{bA_{\lambda}^4}{4}= \mathcal{E}_{\lambda}(u_{\lambda})+\sum_{k=1}^{l}\mathcal{ E}_{\lambda}^{\infty}(w^k);
 \end{equation}
 \begin{equation}\label{N61}
   A_{\lambda}^2= \|\nabla u_{\lambda}\|_2^2+\sum_{k=1}^{l}\|\nabla w^k\|_2^2,
 \end{equation}
 where
 \begin{equation}\label{Up}
  \mathcal{E}_{\lambda}(u) = \frac{a+bA_{\lambda}^2}{2}\int_{\R^3}|\nabla u|^2\mathrm{d}x
                   +\frac{1}{2}\int_{\R^3}V(x)u^2\mathrm{d}x-\lambda\int_{\R^3}F(u)\mathrm{d}x
 \end{equation}
 and
 \begin{equation}\label{Upi}
   \mathcal{E}_{\lambda}^{\infty}(u) = \frac{a+bA_{\lambda}^2}{2}\int_{\R^3}|\nabla u|^2\mathrm{d}x
                   +\frac{V_{\infty}}{2}\int_{\R^3}u^2\mathrm{d}x-\lambda\int_{\R^3}F(u)\mathrm{d}x.
 \end{equation}
 \end{enumerate}
 where we agree that in the case $l = 0$ the above holds without $w^k$.
 \end{lemma}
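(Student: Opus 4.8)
The plan is to remove the nonlocality by ``freezing'' the Kirchhoff coefficient and then run the classical Benci--Cerami/Struwe global compactness iteration for the resulting \emph{local} functional. Since $\{u_n\}$ is bounded in $H^1(\R^3)$, after passing to a subsequence I may assume that $A_\lambda^2:=\lim_{n\to\infty}\|\nabla u_n\|_2^2$ exists and that $u_n\rightharpoonup u_\lambda$ in $H^1(\R^3)$, $u_n\to u_\lambda$ in $L^s_{\mathrm{loc}}(\R^3)$ for $s\in[1,6)$ and a.e. on $\R^3$. For $\varphi\in H^1(\R^3)$ with $\|\varphi\|\le 1$ one has
\[
  \big|\mathcal{I}_\lambda'(u_n)\varphi-\mathcal{E}_\lambda'(u_n)\varphi\big|
   = b\big|\|\nabla u_n\|_2^2-A_\lambda^2\big|\,\Big|\int_{\R^3}\nabla u_n\cdot\nabla\varphi\,\mathrm{d}x\Big|
   \le b\big|\|\nabla u_n\|_2^2-A_\lambda^2\big|\,\|\nabla u_n\|_2\to 0 ,
\]
so $\mathcal{E}_\lambda'(u_n)\to 0$ in $H^{-1}(\R^3)$, while $\mathcal{E}_\lambda(u_n)=\mathcal{I}_\lambda(u_n)+\tfrac{b}{2}A_\lambda^2\|\nabla u_n\|_2^2-\tfrac{b}{4}\|\nabla u_n\|_2^4\to c_\lambda+\tfrac{b}{4}A_\lambda^4$. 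Hence $\{u_n\}$ is a bounded $(\mathrm{PS})$-sequence for $\mathcal{E}_\lambda$ at that level. From (V1)--(V2) I would also note that $V(x)\to V_\infty$ as $|x|\to\infty$ (the $\liminf$ equals $V_\infty$ while $V\le V_\infty$), so that $\mathcal{E}_\lambda^\infty$ is the problem at infinity attached to $\mathcal{E}_\lambda$ and $(\mathcal{E}_\lambda^\infty)'(v_n)-\mathcal{E}_\lambda'(v_n)\to 0$ in $H^{-1}$ whenever $v_n\rightharpoonup 0$.

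Next I would pass to the weak limit in $\mathcal{E}_\lambda'(u_n)\to 0$, using the subcritical growth (F1)--(F2) and local compactness of the Nemytskii operator, to get $\mathcal{E}_\lambda'(u_\lambda)=0$, which is (i). Setting $v_n^1:=u_n-u_\lambda\rightharpoonup 0$, the Brezis--Lieb lemma — in the spirit of Lemma~\ref{lem 2.10}, but now \emph{without} any nonlocal cross term, since the gradient part of $\mathcal{E}_\lambda$ is quadratic — together with $\int_{\R^3}(V(x)-V_\infty)(v_n^1)^2\,\mathrm{d}x\to 0$, yields
\[
  \mathcal{E}_\lambda(u_n)=\mathcal{E}_\lambda(u_\lambda)+\mathcal{E}_\lambda^\infty(v_n^1)+o(1),\qquad
  \|\nabla u_n\|_2^2=\|\nabla u_\lambda\|_2^2+\|\nabla v_n^1\|_2^2+o(1),
\]
and $(\mathcal{E}_\lambda^\infty)'(v_n^1)\to 0$ in $H^{-1}$. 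I then iterate on $v_n^1$ by the vanishing/non-vanishing dichotomy. If $\sup_{y\in\R^3}\int_{B_1(y)}|v_n^1|^2\,\mathrm{d}x\to 0$, Lions' lemma forces $v_n^1\to 0$ in $L^s(\R^3)$ for $2<s<6$, hence $\int_{\R^3}F(v_n^1)\,\mathrm{d}x\to 0$ and $\int_{\R^3}f(v_n^1)v_n^1\,\mathrm{d}x\to 0$; testing $(\mathcal{E}_\lambda^\infty)'(v_n^1)$ with $v_n^1$ then gives $v_n^1\to 0$ in $H^1(\R^3)$, so $l=0$ and \eqref{N60}--\eqref{N61} follow at once. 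Otherwise there are $\delta>0$ and $\{y_n^1\}\subset\R^3$ with $\int_{B_1(y_n^1)}|v_n^1|^2\,\mathrm{d}x\ge\delta$; since $v_n^1\rightharpoonup 0$, necessarily $|y_n^1|\to\infty$, and the translates $w_n^1:=v_n^1(\cdot+y_n^1)$ converge weakly to some $w^1\ne 0$ with $(\mathcal{E}_\lambda^\infty)'(w^1)=0$, using translation invariance of $\mathcal{E}_\lambda^\infty$ and $(\mathcal{E}_\lambda^\infty)'(v_n^1)\to 0$. Putting $v_n^2:=v_n^1-w^1(\cdot-y_n^1)$ and applying Brezis--Lieb once more in the translated frame peels off one bubble and returns a sequence to which the same dichotomy applies.

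The iteration terminates after finitely many steps, and this is the step I would single out as the one needing care in bookkeeping. Every nontrivial critical point $w$ of $\mathcal{E}_\lambda^\infty$ satisfies, via $(\mathcal{E}_\lambda^\infty)'(w)w=0$, (F1)--(F2) and $S\|w\|_6^2\le\|\nabla w\|_2^2$,
\[
  a\|\nabla w\|_2^2\le (a+bA_\lambda^2)\|\nabla w\|_2^2+V_\infty\|w\|_2^2
   =\lambda\int_{\R^3}f(w)w\,\mathrm{d}x\le\tfrac{V_\infty}{2}\|w\|_2^2+C_1S^{-3}\|\nabla w\|_2^6 ,
\]
so $\|\nabla w\|_2^2\ge\rho^2:=\big(aS^3/C_1\big)^{1/2}>0$, a bound independent of $\lambda$ and of the step (here $a+bA_\lambda^2\ge a$ is all that is used). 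Since along the construction $\|\nabla u_\lambda\|_2^2+\sum_{k=1}^{l}\|\nabla w^k\|_2^2+\lim_n\|\nabla v_n^{l+1}\|_2^2=A_\lambda^2<\infty$, only finitely many bubbles can occur; for the resulting $l\in\N\cup\{0\}$ the tail $v_n^{l+1}$ vanishes in the Lions sense, hence $v_n^{l+1}\to 0$ in $H^1(\R^3)$. Adding the energy identities over the $l+1$ steps gives $c_\lambda+\tfrac{b}{4}A_\lambda^4=\mathcal{E}_\lambda(u_\lambda)+\sum_{k=1}^{l}\mathcal{E}_\lambda^\infty(w^k)$, i.e. \eqref{N60}, and adding the gradient-norm identities gives \eqref{N61}.

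The main obstacle I anticipate is not any single estimate but the interplay between the frozen coefficient $a+bA_\lambda^2$ and the concentration at infinity: one must check that this constant, though determined by the sequence itself, plays the role of a fixed mass term throughout, and — crucially — that the Brezis--Lieb splittings for $\mathcal{E}_\lambda$ and $\mathcal{E}_\lambda^\infty$ carry \emph{no} Kirchhoff cross term, in contrast with the splitting for $\mathcal{I}$ in Lemma~\ref{lem 2.10}. That is exactly what makes the passage to the local problem profitable, and once it is in place the rest is the standard Struwe/Benci--Cerami decomposition sketched above.
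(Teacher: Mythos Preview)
Your proposal is correct and follows precisely the route the paper intends: the authors omit the proof and refer to \cite[Lemma~3.4]{LG}, whose argument is exactly the ``freeze the Kirchhoff coefficient, reduce to the local functional $\mathcal{E}_\lambda$, then run the Benci--Cerami/Struwe splitting at infinity'' strategy you outline. The key observations you make --- that $\mathcal{E}_\lambda'(u_n)\to 0$ and $\mathcal{E}_\lambda(u_n)\to c_\lambda+\tfrac{b}{4}A_\lambda^4$ once $\|\nabla u_n\|_2^2\to A_\lambda^2$, that the Brezis--Lieb splitting for $\mathcal{E}_\lambda$ carries no nonlocal cross term, and that a uniform lower bound on $\|\nabla w^k\|_2$ terminates the iteration --- are the same ones used there.
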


    Analogous to the proof of \cite[Lemma 3.4]{LG}, we can prove Lemma \ref{lem 3.6}, so we omit it here.

 \begin{lemma}\label{lem 3.7}
 Assume that {\rm (V1)-(V3)} and {\rm (F1)-(F3)} hold. Then for almost every
 $\lambda\in (\bar{\lambda},1]$, there exists $u_{\lambda}\in H^1(\R^3)\setminus \{0\}$ such that
 \begin{equation}\label{D31}
   \mathcal{I}_{\lambda}'(u_{\lambda})=0, \ \ \ \ \mathcal{I}_{\lambda}(u_{\lambda}) = c_{\lambda}.
 \end{equation}
\end{lemma}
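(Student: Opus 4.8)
The plan is to combine the Jeanjean--Toland monotonicity scheme with the global splitting of bounded Palais--Smale sequences in Lemma~\ref{lem 3.6} and the strict inequality of Lemma~\ref{lem 3.5}. Proposition~\ref{pro 3.1} applies to the family $\mathcal{I}_{\lambda}=A-\lambda B$ of the Introduction: condition (ii) there is immediate from (F1), and condition (iii) together with the boundedness needed for (i) is supplied by Lemma~\ref{lem 3.4}. Hence for almost every $\lambda\in(\bar{\lambda},1]$ there is a bounded (PS)$_{c_{\lambda}}$ sequence $\{u_n\}$ for $\mathcal{I}_{\lambda}$. Feeding $\{u_n\}$ into Lemma~\ref{lem 3.6} yields $u_{\lambda}$, an integer $l\ge 0$, nontrivial profiles $w^1,\dots,w^l$ and $A_{\lambda}^2=\lim_n\|\nabla u_n\|_2^2$ with $\mathcal{E}_{\lambda}'(u_{\lambda})=0$, $(\mathcal{E}_{\lambda}^{\infty})'(w^k)=0$ and \eqref{N60}--\eqref{N61}. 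The goal reduces to proving $l=0$ and $u_{\lambda}\neq 0$: then \eqref{N61} forces $A_{\lambda}^2=\|\nabla u_{\lambda}\|_2^2$, so $\mathcal{E}_{\lambda}'(u_{\lambda})=0$ is exactly $\mathcal{I}_{\lambda}'(u_{\lambda})=0$; and since in that case $\mathcal{E}_{\lambda}(u_{\lambda})=\mathcal{I}_{\lambda}(u_{\lambda})+\frac{b}{4}\|\nabla u_{\lambda}\|_2^4=\mathcal{I}_{\lambda}(u_{\lambda})+\frac{b}{4}A_{\lambda}^4$, \eqref{N60} gives $\mathcal{I}_{\lambda}(u_{\lambda})=c_{\lambda}$, which is \eqref{D31}.

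To rule out $l\ge 1$, I would use that each $w^k$ solves the frozen equation $-(a+bA_{\lambda}^2)\triangle w^k+V_{\infty}w^k=\lambda f(w^k)$, whose Pohozaev identity gives $\mathcal{E}_{\lambda}^{\infty}(w^k)=\frac{a+bA_{\lambda}^2}{3}\|\nabla w^k\|_2^2$. Writing $(w^k)_t(x)=w^k(x/t)$ and imposing $\mathcal{P}_{\lambda}^{\infty}\big((w^k)_t\big)=0$ reduces (after using that Pohozaev identity) to $(a+bA_{\lambda}^2)t^2-b\|\nabla w^k\|_2^2\,t-a=0$, which has a unique positive root $t_k$; since $\|\nabla w^k\|_2^2\le A_{\lambda}^2$ by \eqref{N61}, evaluating the quadratic at $t=1$ gives $t_k\le 1$. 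Hence $(w^k)_{t_k}\in\mathcal{M}_{\lambda}^{\infty}$, and a direct computation using $\mathcal{P}_{\lambda}^{\infty}\big((w^k)_{t_k}\big)=0$ yields
\[
 m_{\lambda}^{\infty}\le\mathcal{I}_{\lambda}^{\infty}\big((w^k)_{t_k}\big)=\frac{a}{3}t_k\|\nabla w^k\|_2^2+\frac{b}{12}t_k^2\|\nabla w^k\|_2^4\le\frac{a}{3}\|\nabla w^k\|_2^2+\frac{b}{12}\|\nabla w^k\|_2^4 .
\]
On the other hand, from $\mathcal{E}_{\lambda}'(u_{\lambda})=0$ the Pohozaev identity for $\mathcal{E}_{\lambda}$ (the analogue of Lemma~\ref{lem 3.2} with $a$ replaced by $a+bA_{\lambda}^2$ and $b$ by $0$), together with (V3) and the Hardy inequality \eqref{B22}, gives $\mathcal{E}_{\lambda}(u_{\lambda})\ge\frac{b}{3}A_{\lambda}^2\|\nabla u_{\lambda}\|_2^2\ge 0$. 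Substituting these estimates and $A_{\lambda}^2=\|\nabla u_{\lambda}\|_2^2+\sum_k\|\nabla w^k\|_2^2$ into \eqref{N60}, a short algebraic expansion (in which the ``wrong-sign'' term $-\frac{b}{4}A_{\lambda}^4$ is absorbed) leads to
\[
 c_{\lambda}\ \ge\ \frac{a}{3}\sum_{k=1}^{l}\|\nabla w^k\|_2^2+\frac{b}{12}\Big(\sum_{k=1}^{l}\|\nabla w^k\|_2^2\Big)^2\ \ge\ \sum_{k=1}^{l}\Big(\frac{a}{3}\|\nabla w^k\|_2^2+\frac{b}{12}\|\nabla w^k\|_2^4\Big)\ \ge\ l\,m_{\lambda}^{\infty}\ \ge\ m_{\lambda}^{\infty},
\]
which contradicts Lemma~\ref{lem 3.5}. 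Therefore $l=0$; and if moreover $u_{\lambda}=0$ then $A_{\lambda}=0$ and \eqref{N60} forces $c_{\lambda}=0$, contradicting $c_{\lambda}\ge\kappa_0>0$ from Lemma~\ref{lem 3.4}(ii). Hence $u_{\lambda}\neq 0$, and the proof is finished as indicated in the first paragraph.

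The step I expect to be the main obstacle is the algebraic expansion in the last display: the frozen Kirchhoff coefficient produces the extra term $-\frac{b}{4}A_{\lambda}^4$ in \eqref{N60}, and making the accounting close requires both the superadditivity $\big(\sum_k\|\nabla w^k\|_2^2\big)^2\ge\sum_k\|\nabla w^k\|_2^4$ and the sharp lower bound $\mathcal{E}_{\lambda}(u_{\lambda})\ge\frac{b}{3}A_{\lambda}^2\|\nabla u_{\lambda}\|_2^2$ (a mere $\mathcal{E}_{\lambda}(u_{\lambda})\ge 0$ does not suffice), all while keeping each rescaling parameter $t_k\le 1$ so that the rescaled profiles still dominate $m_{\lambda}^{\infty}$. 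A subsidiary technical point is the justification of the Pohozaev identities for the auxiliary functionals $\mathcal{E}_{\lambda}$ and $\mathcal{E}_{\lambda}^{\infty}$, for which (V3) is used exactly as in Lemma~\ref{lem 3.2}.
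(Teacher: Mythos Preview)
Your argument is correct and follows the same overall architecture as the paper: Proposition~\ref{pro 3.1} produces a bounded (PS)$_{c_\lambda}$ sequence, Lemma~\ref{lem 3.6} splits it, the Poho\u zaev identities for $\mathcal{E}_\lambda$ and $\mathcal{E}_\lambda^\infty$ are invoked, and Lemma~\ref{lem 3.5} forces $l=0$. The algebra you flag as the main obstacle does close: with $U=\|\nabla u_\lambda\|_2^2$, $W=\sum_k\|\nabla w^k\|_2^2$ and $A_\lambda^2=U+W$, your bounds give
\[
  c_\lambda\ \ge\ \frac{a}{3}W+\frac{b}{3}A_\lambda^4-\frac{b}{4}A_\lambda^4
  \ =\ \frac{a}{3}W+\frac{b}{12}A_\lambda^4
  \ \ge\ \frac{a}{3}W+\frac{b}{12}W^2,
\]
and then superadditivity and $t_k\le 1$ finish it.

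The one genuine difference from the paper is how each profile $w^k$ is compared with $m_\lambda^\infty$. You solve the quadratic $(a+bA_\lambda^2)t^2-b\|\nabla w^k\|_2^2\,t-a=0$ explicitly, use $\|\nabla w^k\|_2^2\le A_\lambda^2$ to get $t_k\le 1$, and compute $\mathcal{I}_\lambda^\infty((w^k)_{t_k})=\frac{a}{3}t_k\|\nabla w^k\|_2^2+\frac{b}{12}t_k^2\|\nabla w^k\|_2^4$ directly. The paper instead observes $\tilde{\mathcal P}_\lambda^\infty(w^k)=0\Rightarrow \mathcal P_\lambda^\infty(w^k)\le 0$ and applies the IIP inequality \eqref{N45} to obtain the cleaner bound
\[
  \mathcal{E}_\lambda^\infty(w^k)\ \ge\ \frac{bA_\lambda^2}{4}\|\nabla w^k\|_2^2+m_\lambda^\infty,
\]
which, together with $\mathcal{E}_\lambda(u_\lambda)\ge\frac{bA_\lambda^2}{4}\|\nabla u_\lambda\|_2^2$, makes the $-\frac{b}{4}A_\lambda^4$ term cancel \emph{exactly} in \eqref{N60}, yielding $c_\lambda\ge l\,m_\lambda^\infty$ without any superadditivity step. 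Your route is more elementary in that it bypasses Lemma~\ref{lem 3.3}, at the cost of a slightly more intricate final accounting; the paper's route keeps the estimate sharper term by term and leans on the IIP inequality that is central to the paper.
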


\begin{proof} Under (V1)-(V3) and (F1)-(F3), Lemma \ref{lem 3.4} implies that $\mathcal{I}_{\lambda}(u)$ satisfies the assumptions of
 Proposition \ref{pro 3.1} with $X=H^1(\R^3)$ and $\Phi_{\lambda}=\mathcal{I}_{\lambda}$. So for almost every $\lambda\in [0.5,1]$,
 there exists a bounded sequence $\{u_n(\lambda)\} \subset H^1(\R^3)$ (for simplicity, we denote the sequence by $\{u_n\}$
 instead of $\{u_n(\lambda)\}$) such that
 \begin{equation}\label{PS}
   \mathcal{I}_{\lambda}(u_n)\rightarrow c_{\lambda}>0, \ \ \ \ \|\mathcal{I}_{\lambda}'(u_n)\| \rightarrow 0.
 \end{equation}
 By Lemma \ref{lem 3.6}, there exist a subsequence of $\{u_n\}$, still denoted by $\{u_n\}$, and $u_{\lambda}\in H^1(\R^3)$ such
 that $A_{\lambda}^2:=\lim_{n\to\infty}\|\nabla u_n\|_2^2$ exists, $u_n\rightharpoonup u_{\lambda}$ in $H^1(\R^3)$ and
 $\mathcal{E}_{\lambda} '(u_{\lambda})=0$, and there exist $l\in \N$ and $w^1, \ldots, w^l\in H^1(\R^3)\setminus \{0\}$ such that
 $(\mathcal{E}_{\lambda}^{\infty})'(w^k)=0$ for $1\le k\le l$,
 \begin{equation}\label{Ab1}
    c_{\lambda}+\frac{bA_{\lambda}^4}{4}= \mathcal{E}_{\lambda}(u_{\lambda})+\sum_{k=1}^{l}\mathcal{E}_{\lambda}^{\infty}(w^k)
 \end{equation}
 and
 \begin{equation}\label{Ab2}
   A_{\lambda}^2= \|\nabla u_{\lambda}\|_2^2+\sum_{k=1}^{l}\|\nabla w^k\|_2^2.
 \end{equation}
 Since $\mathcal{E}_{\lambda}'(u_{\lambda})=0$, then we have the Poho\u zaev identity referred to the functional $\mathcal{E}_{\lambda}$
 \begin{equation}\label{Plt}
  \tilde{\mathcal{P}}_{\lambda}(u_{\lambda})
     :=  \frac{a+bA_{\lambda}^2}{2}\|\nabla u_{\lambda}\|_2^2+\frac{1}{2}\int_{{\R}^3}\left[3V(x)+\nabla V(x)\cdot x\right]u_{\lambda}^2\mathrm{d}x
         -3\lambda\int_{{\R}^3}F(u_{\lambda})\mathrm{d}x=0.
 \end{equation}
 From (V3) and Hardy inequality
 \begin{equation}\label{D32}
   a\|\nabla u_{\lambda}\|_2^2 \ge \frac{a}{4}\int_{\R^3}\frac{u_{\lambda}^2}{|x|^2}\mathrm{d}x
    \ge \int_{{\R}^3}\nabla V(x)\cdot xu_{\lambda}^2\mathrm{d}x.
 \end{equation}
 It follows from \eqref{Up}, \eqref{Plt} and \eqref{D32} that
 \begin{eqnarray}\label{D33}
   \mathcal{E}_{\lambda}(u_{\lambda})
     &  =  & \mathcal{E}_{\lambda}(u_{\lambda})-\frac{1}{3}\tilde{\mathcal{P}}_{\lambda}(u_{\lambda})\nonumber\\
     &  =  & \frac{a+bA_{\lambda}^2}{3}\|\nabla u_{\lambda}\|_2^2-\frac{1}{6}\int_{{\R}^3}\nabla V(x)\cdot xu_{\lambda}^2\mathrm{d}x\nonumber\\
     & \ge & \frac{bA_{\lambda}^2}{3}\|\nabla u_{\lambda}\|_2^2.
 \end{eqnarray}
 Since $(\mathcal{E}_{\lambda}^{\infty})'(w^k)=0$, then we have the Poho\u zaev identity referred to the functional $\mathcal{E}_{\lambda}^{\infty}$
 \begin{equation}\label{D34}
  \tilde{\mathcal{P}}_{\lambda}^{\infty}(w^k)
     :=  \frac{a+bA_{\lambda}^2}{2}\|\nabla w^k\|_2^2+\frac{3V_{\infty}}{2}\int_{{\R}^3}(w^k)^2\mathrm{d}x
         -3\lambda\int_{{\R}^3}F(w^k)\mathrm{d}x=0.
 \end{equation}
 Thus, from \eqref{Ab2} and \eqref{D34}, we have
 \begin{eqnarray}\label{D35}
   0=\tilde{\mathcal{P}}_{\lambda}^{\infty}(w^k) \ge \mathcal{P}_{\lambda}^{\infty}(w^k).
 \end{eqnarray}
 Since $w^k\ne 0$ and $w^k\in\Lambda$, in view of Lemmas \ref{lem 2.6} and \ref{lem 2.7}, there exists $t_k>0$ such that
 $(w^k)_{t_k}\in \mathcal{M}_{\lambda}^{\infty}$.
 From \eqref{Il}, \eqref{JlL}, \eqref{N45}, \eqref{Upi}, \eqref{Ab2}, \eqref{D34} and \eqref{D35}, one has
 \begin{eqnarray}\label{D36}
   \mathcal{E}_{\lambda}^{\infty}(w^k)
     &  =  & \mathcal{E}_{\lambda}^{\infty}(w^k)-\frac{1}{3}\tilde{\mathcal{P}}_{\lambda}^{\infty}(w^k)
              =\frac{a+bA_{\lambda}^2}{3}\|\nabla w^k\|_2^2\nonumber\\
     &  =  & \frac{bA_{\lambda}^2}{3}\|\nabla w^k\|_2^2+\mathcal{I}_{\lambda}^{\infty}(w^k)
               -\frac{1}{3}\mathcal{P}_{\lambda}^{\infty}(w^k)-\frac{b}{12}\|\nabla w^k\|_2^4\nonumber\\
     & \ge & \frac{bA_{\lambda}^2}{4}\|\nabla w^k\|_2^2+\mathcal{I}_{\lambda}^{\infty}\left((w^k)_{t_k}\right)
             -\frac{t_k^3}{3}\mathcal{P}_{\lambda}^{\infty}(w^k)\nonumber\\
     & \ge & \frac{bA_{\lambda}^2}{4}\|\nabla w^k\|_2^2+m_{\lambda}^{\infty}.
 \end{eqnarray}
 It follows from \eqref{Ab1}, \eqref{Ab2}, \eqref{D33} and \eqref{D36} that
 \begin{eqnarray*}
   c_{\lambda}+\frac{bA_{\lambda}^4}{4}
    &  =  & \mathcal{E}_{\lambda}(u_{\lambda})+\sum_{k=1}^{l}\mathcal{E}_{\lambda}^{\infty}(w^k)\\
    & \ge & lm_{\lambda}^{\infty}+\frac{bA_{\lambda}^2}{4}\left[\|\nabla u_{\lambda}\|_2^2+\sum_{k=1}^{l}\|\nabla w^k\|_2^2\right]\\
    &  =  & lm_{\lambda}^{\infty}+\frac{bA_{\lambda}^4}{4}, \ \ \ \ \forall \ \lambda\in (\bar{\lambda}, 1],
 \end{eqnarray*}
 which, together with Lemma \ref{lem 3.5}, implies that $l=0$ and $\mathcal{E}_{\lambda}(u_{\lambda})=c_{\lambda}+\frac{bA_{\lambda}^4}{4}$.
 Hence, it follows from \eqref{Ab2} that $A_{\lambda}=\|\nabla u_{\lambda}\|_2$, and so $\mathcal{I}_{\lambda}'(u_{\lambda}) = 0$
 and $\mathcal{I}_{\lambda}(u_{\lambda}) = c_{\lambda}$.
 \end{proof}

 \begin{proof}[Proof of Theorem  \ref{thm1.1}] In view of Lemma \ref{lem 3.7}, there exist two sequences
 $\{\lambda_n\}\subset (\bar{\lambda}, 1]$ and $\{u_{\lambda_n}\}\subset H^1(\R^3)$, denoted by $\{u_n\}$, such that
 \begin{equation}\label{Q00}
   \lambda_n\rightarrow 1, \ \ \ \ \mathcal{I}_{\lambda_n}'(u_n)=0, \ \ \ \ \mathcal{I}_{\lambda_n}(u_n) = c_{\lambda_n}.
 \end{equation}
 From (V3), \eqref{Ilu}, \eqref{B22}, \eqref{Q00} and Lemma \ref{lem 3.4} (v), one has
 \begin{eqnarray}\label{Q01}
   c_{1}
    & \ge & c_{\lambda_n}=\mathcal{I}_{\lambda_n}(u_n)-\frac{1}{3}\mathcal{P}_{\lambda_n}(u_n)\nonumber\\
    &  =  & \frac{a}{3}\|\nabla u_n\|_2^2-\frac{1}{6}\int_{{\R}^3}\nabla V(x)\cdot xu_n^2\mathrm{d}x
              +\frac{b}{6}\|\nabla u_n\|_2^4\nonumber\\
    & \ge & \frac{b}{6}\|\nabla u_n\|_2^4.
 \end{eqnarray}
 This shows that $\{\|\nabla u_n\|_2\}$ is bounded.  Next, we demonstrate that $\{u_n\}$ is bounded in $H^1(\R^3)$.
 By (V1), (V2), (F1), (F2), \eqref{Ilu}, \eqref{Q00}, \eqref{Q01} and the Sobolev embedding inequality, we have
 \begin{eqnarray*}\label{Q03}
   \gamma_1'\|u_n\|^2
    & \le & \int_{{\R}^3}\left[a|\nabla u_n|^2+V(x)u_n^2\right]\mathrm{d}x\\
    & \le & 2c_{\lambda_n}+2\lambda_n\int_{{\R}^3}F(u_n)\mathrm{d}x\\
    & \le & 2c_{1}+\frac{\gamma_1'}{4}\|u_n\|^2+C_5\|u_n\|_6^6\nonumber\\
   & \le & 2c_{1}+\frac{\gamma_1'}{4}\|u_n\|_2^2+C_5S^{-3}\|\nabla u_n\|_2^6,
 \end{eqnarray*}
 where $\gamma_1'$ is a positive constant. Hence, $\{u_n\}$ is bounded in $H^1(\R^3)$. In view of Lemma \ref{lem 3.4} (v), we have $\lim_{n\to\infty}c_{\lambda_n}=c_*\le c_1$. Hence,
 it follows from \eqref{IU}, \eqref{Ilu} and \eqref{Q00} that
 \begin{equation}\label{Q04}
   \mathcal{I}(u_n)\rightarrow  c_*, \ \ \ \  \mathcal{I}'(u_n)\rightarrow 0.
 \end{equation}
 This shows that $\{u_n\}$ satisfy \eqref{PS} with $\mathcal{I}_{\lambda}=\mathcal{I}$ and $c_{\lambda}=c_*$. In view of
 the proof of Lemma \ref{lem 3.7}, we can show that there exists $\tilde{u}\in H^1(\R^N)\setminus \{0\}$ such that
 \begin{equation}\label{Q05}
   \mathcal{I}'(\tilde{u})=0, \ \ \ \ 0< \mathcal{I}(\tilde{u})\le c_1.
 \end{equation}
 Let
 $$
   \mathcal{K}:=\left\{u\in H^1(\R^N)\setminus \{0\} : \mathcal{I}'(u)=0\right\}, \ \ \ \ \hat{m}:=\inf_{u\in\mathcal{K}}\mathcal{I}(u).
 $$
 Then \eqref{Q05} shows that $\mathcal{K}\ne \emptyset$ and $\hat{m}\le c_1$. For any $u\in \mathcal{K}$, Lemma \ref{lem 3.2}
 implies $\mathcal{P}(u)=\mathcal{P}_1(u)=0$. Hence it follows from \eqref{D33} that $\mathcal{I}(u)=\mathcal{I}_1(u)>0$, and so $\hat{m}\ge 0$.
 Let $\{u_n\}\subset \mathcal{K}$ such that
 \begin{equation}\label{Z01}
   \mathcal{I}'(u_n)=0, \ \ \ \ \mathcal{I}(u_n) \rightarrow \hat{m}.
 \end{equation}
 In view of Lemma \ref{lem 3.5}, $\hat{m}\le c_1<m_1^{\infty}$. By a similar argument as in the proof of Lemma \ref{lem 3.7}, we can prove
 that there exists $\bar{u}\in H^1(\R^N)\setminus \{0\}$ such that
 \begin{equation}\label{Z02}
   \mathcal{I}'(\bar{u})=0, \ \ \ \ \mathcal{I}(\bar{u}) = \hat{m}.
 \end{equation}
 This shows that $\bar{u}$ is a nontrivial least energy solution of \eqref{KE}.
 \end{proof}

\section*{Acknowledgements}
This work was partially supported by the National Natural Science Foundation of China (11571370).


\begin{thebibliography}{10}
\expandafter\ifx\csname url\endcsname\relax
  \def\url#1{\texttt{#1}}\fi
\expandafter\ifx\csname urlprefix\endcsname\relax\def\urlprefix{URL }\fi

\bibitem{AC}
C.~O. Alves, F.~J. Corr\'ea, T.~Ma, Positive solutions for a quasilinear
  elliptic equation of {K}irchhoff type, Comput. Math. Appl. 49 (2005) 85--93.

\bibitem{Az}
A.~Azzollini, The elliptic {K}irchhoff equation in {$\mathbb R^N$} perturbed by
  a local nonlinearity, Diffe. Integral Equ. (2012) 543--554.

\bibitem{Az1}
A.~Azzollini, A note on the elliptic {K}irchhoff equation in {$\mathbb R^N$}
  perturbed by a local nonlinearity, Commun. Contemp. Math. 17 (2015) 1450039.

\bibitem{BL}
H.~Berestycki, P.~Lions, Nonlinear scalar field equations, {I}. {E}xistence of
  a ground state, Arch. Rational Mech. Anal. 82 (1983) 313--345.

\bibitem{CC}
C.~Y. Chen, Y.~C. Kuo, T.~F. Wu, The {N}ehari manifold for a {K}irchhoff type
  problem involving sign-changing weight functions, J. Differential Equations
  250 (2011) 1876--1908.

\bibitem{CJ}
J.~S. Chen, L.~Li, Multiple solutions for the nonhomogeneous {K}irchhoff
  equations on {$\mathbb R^3$}, Nonlinear Anal. Real World Appl. 14 (2013)
  1477--1486.

\bibitem{CBT}
B.~T. Cheng, X.~Wu, Existence results of positive solutions of {K}irchhoff type
  problems, Nonlinear Anal. 71 (2009) 4883--4892.

\bibitem{DY}
Y.~B. Deng, S.~J. Peng, W.~Shuai, Existence and asymptotic behavior of nodal
  solutions for the {K}irchhoff-type problems in {$\mathbb R^3$}, J. Functional
  Analysis 269 (2015) 3500--3527.

\bibitem{FG}
G.~M. Figueiredo, Existence of a positive solution for a {K}irchhoff problem
  type with critical growth via truncation argument, J. Math. Anal. Appl. 401
  (2013) 706--713.

\bibitem{Gu}
Z.~Guo, Ground states for {K}irchhoff equations without compact condition, J.
  Differential Equations 259 (2015) 2884--2902.

\bibitem{HZ1}
X.~M. He, W.~M. Zou, Infinitely many positive solutions for {K}irchhoff-type
  problems, Nonlinear Anal. 70 (2009) 1407--1414.

\bibitem{HZ2}
X.~M. He, W.~M. Zou, Existence and concentration behavior of positive solutions
  for a {K}irchhoff equation in {$\mathbb R^3$}, J. Differential Equations 2
  (2012) 1813--1834.

\bibitem{HY}
Y.~He, G.~B. Li, S.~J. Peng, Concentrating bound states for {K}irchhoff type
  problems in {$\mathbb R^3$} involving critical sobolev exponents, Adv.
  Nonlinear Stud. 14 (2014) 483--510.

\bibitem{Je}
L.~Jeanjean, On the existence of bounded {P}alais-{S}male sequences and
  application to a {L}andesman- {L}azer-type problem set on {$\mathbb R^N$},
  Proc. Roy. Soc. Edinburgh Sect. A 129 (1999) 787--809.

\bibitem{JT1}
L.~Jeanjean, K.~Tanka, A note on a mountain pass characterization of least
  energy solutions, Adv. Nonlinear Stud. 3 (2003) 445--455.

\bibitem{JT2}
L.~Jeanjean, K.~Tanka, A positive solution for a nonlinear {S}chr\"odinger
  equation on {$\mathbb R^N$}, Indiana Univ. Math. J. 54 (2005) 443--464.

\bibitem{JTo}
L.~Jeanjean, J.~F. Toland, Bounded {P}alais-{S}male mountain-pass sequences, C.
  R. Acad. Sci. Paris S\'er. I Math. 327 (1998) 23--28.

\bibitem{LCY}
C.~Y. Lei, J.~F. Liao, C.~L. Tang, Multiple positive solutions for {K}irchhoff
  type of problems with singularity and critical exponents, J. Math. Anal.
  Appl. 421 (2015) 521--538.

\bibitem{LG}
G.~B. Li, H.~Y. Ye, Existence of positive ground state solutions for the
  nonlinear {K}irchhoff type equations in {$\mathbb R^3$}, J. Differential
  Equations 257 (2014) 566--600.

\bibitem{LYH}
Y.~H. Li, F.~Y. Li, J.~P. Shi, Existence of a positive solution to {K}irchhoff
  type problems without compactness conditions, J. Differential Equations 253
  (2012) 2285--2294.

\bibitem{LP}
D.~D. L{\"u}, S.~J. Peng, Existence and asymptotic behavior of vector solutions
  for coupled nonlinear {K}irchhoff-type systems, J. Differential Equations
  263~(12) (2017) 8947--8978.

\bibitem{Lu}
S.~S. Lu, An autonomous {K}irchhoff-type equation with general nonlinearity in
  {$\mathbb R^N$}, Nonlinear Anal. Real World Appl. 34 (2017) 233--249.

\bibitem{ND1}
D.~Naimen, The critical problem of {K}irchhoff type elliptic equations in
  dimension four, J. Differential Equations 257 (2014) 1168--1193.

\bibitem{ND2}
D.~Naimen, Positive solutions of {K}irchhoff type elliptic equations involving
  a critical sobolev exponent, NoDEA Nonlinear Differential Equations Appl. 21
  (2014) 885--914.

\bibitem{PK}
K.~Perera, Z.~T. Zhang, Nontrivial solutions of kirchhoff-type problems via the
  yang index, J. Differential Equations 221 (2006) 246--255.

\bibitem{RD}
D.~Ruiz, The {S}chr\"odinger-{P}oisson equation under the effect of a nonlinear
  local term, J. Funct. Anal. 237 (2006) 655--674.

\bibitem{SW}
W.~Shuai, Sign-changing solutions for a class of {K}irchhoff-type problem in
  bounded domains, J. Differential Equations 259 (2015) 1256--1274.

\bibitem{SJJ}
J.~J. Sun, C.~L. Tang, Existence and multiplicityof solutions for {K}irchhoff
  type equations, Nonlinear Anal. 74 (2011) 1212--1222.

\bibitem{TC2}
X.~H. Tang, S.~T. Chen, Ground state solutions of {N}ehari-{P}oho{\u z}aev type
  for {K}irchhoff-type problems with general potentials, Calc. Var. Partial
  Differential Equations 56 (2017) 110--134.

\bibitem{TC3}
X.~H. Tang, S.~T. Chen, Berestycki-{L}ions assumptions on ground state
  solutions for a {N}onlinear {S}chr\"odinger equation, (2018)
  arXiv:1803.01130.

\bibitem{TC}
X.~H. Tang, B.~T. Cheng, Ground state sign-changing solutions for {K}irchhoff
  type problems in bounded domains, J. Differential Equations 261 (2016)
  2384--2402.

\bibitem{WJ}
J.~Wang, L.~X. Tian, J.~X. Xu, F.~B. Zhang, Multiplicity and concentration of
  positive solutions for a {K}irchhoff type problem with critical growth, J.
  Differential Equations 253 (2012) 2314--2351.

\bibitem{WM}
M.~Willem, Minimax theorems, Progress in Nonlinear Differential Equations and
  their Applications, 24, Birkh\"auser Boston Inc., Boston, MA, 1996.

\bibitem{WX}
X.~Wu, Existence of nontrivial solutions and high energy solutions for
  {S}chr{\"o}dinger-{K}irchhoff-type equations in {$\mathbb R^3$}, Nonlinear
  Anal. Real World Appl. 12 (2011) 1278--1287.

\bibitem{ZTC}
W.~Zhang, X.~H. Tang, B.~T. Cheng, Sign-changing solutions for fourth order
  elliptic equations with {K}irchhoff-type, Commun. Pur. Appl. Anal. 15 (2016)
  2161--2177.

\end{thebibliography}
\end{document}